\declaretheorem[numberwithin=section]{theorem}
\declaretheorem[sibling=theorem]{lemma,corollary,proposition}
\declaretheorem[sibling=theorem,style=definition]{example,definition,remark}
\newcommand{\B}{{\mathbf{B}}}
\DeclareMathOperator{\val}{val}
\newcommand{\lex}{{\mathrm{lex}}}
\newcommand\nnfootnote[1]{%
  \begin{NoHyper}
  \renewcommand\thefootnote{}\footnote{#1}%
  \addtocounter{footnote}{-1}%
  \end{NoHyper}
}
\begin{document}

\title[Computing Expansions Cantor Real Bases via a Transducer]{Computing Expansions in Infinitely Many Cantor Real Bases via a Single Transducer}

\author[\'{E}. Charlier]{\'{E}milie Charlier}
\author[P. Popoli]{Pierre Popoli}
\author[M. Rigo]{Michel Rigo}

\address{Department of Mathematics, ULiège, Liège, Belgium}
\email{echarlier@uliege.be, pierre.popoli@uliege.be, m.rigo@uliege.be}

\begin{abstract}
Representing real numbers using convenient numeration systems (integer bases, $\beta$-numeration, Cantor bases, etc.) has been a longstanding mathematical challenge. This paper focuses on Cantor real bases and, specifically, on automatic Cantor real bases and the properties of expansions of real numbers in this setting. 

We develop a new approach where a single transducer associated with a fixed real number~$r$, computes the $\mathbf{B}$-expansion of~$r$ but for an infinite family of Cantor real bases~$\mathbf{B}$ given as input. This point of view contrasts with traditional computational models for which the numeration system is fixed. Under some assumptions on the finitely many Pisot numbers occurring in the Cantor real base, we show that only a finite part of the transducer is visited. 

We obtain fundamental results on the structure of this transducer and on decidability problems about these expansions, proving that for certain classes of Cantor real bases, key combinatorial properties such as greediness of the expansion or periodicity can be decided algorithmically.
\end{abstract}

\maketitle

\nnfootnote{\textbf{Keywords}: Cantor real base ; Pisot number ; Transducer ; Automatic sequence ; Decision problem.}

\nnfootnote{\textbf{MSC 2020 Subject Classification}: 11A63, 11K16; 11B85, 68Q45, 68R15}

\setcounter{tocdepth}{1}
\tableofcontents

\section{Introduction}

There are multiple ways to represent real numbers by bounded sequences of non-negative integers. We first recall the two classical definitions of $\beta$-numeration and Cantor base. Then, we show how Cantor real bases unify these two concepts. Finally, we detail our contributions to Cantor real bases.

\subsection{\texorpdfstring{$\beta$-expansions and Cantor bases}{beta-expansions and Cantor bases}}

Let $\beta>1$ be a real number. On the one hand, a {\em $\beta$-representation} of a real number $r\in [0,1]$ is a sequence $(a_n)_{n\in\mathbb{N}}$ of non-negative integers such that 
\begin{equation}
\label{eq:beta-exp}
    r=\sum_{n=0}^{+\infty} \frac{a_n}{\beta^{n+1}}.
\end{equation} 
These representations were first studied by Rényi~\cite{Renyi1957} and Parry~\cite{Parry1960} with sequences taking values in $\{0,\ldots,\lceil \beta \rceil -1\}$. Whereas a real number usually has an uncountable number of $\beta$-representations~\cite{Sidorov2003}, one of them is distinguished. It is computed by the greedy algorithm thanks to the so-called {\em $\beta$-transformation} 
\begin{equation}
\label{eq:Tbeta}
    \mathsf{T}_{\beta}\colon [0,1]\to [0,1),\ r\mapsto \beta r-\lfloor \beta r\rfloor.
\end{equation}
Note that we also defined this map on $1$, which is unusual; this choice was made intentionally because it will facilitate the presentation of the transducer $\mathcal{T}$ in~\cref{sec:main_transducer}.
By letting $a_n=\lfloor \beta \mathsf{T}_{\beta}^n(r) \rfloor$ for all $n\ge 0$, the obtained representation is called the  \emph{(greedy) $\beta$-expansion} of $r$ and is denoted $d_{\beta}(r)$. It is maximal in lexicographic order among all $\beta$-representations of $r$ and is characterized by the inequalities
\begin{equation}
\label{eq:greedy-inequalities}
    \sum_{n=\ell}^{+\infty} \frac{a_n}{\beta^{n+1}}<\frac{1}{\beta^{\ell}}\qquad \forall\ell\ge 1.
\end{equation}
For a general reference on $\beta$-numeration systems, see~\cite[Chap.~6]{Lothaire}. For some related computational questions, see, for instance, \cite{MR3230872,MR3404455,MR2299792}.

On the other hand, Cantor~\cite{cantor} introduced another type of expansion of real numbers based on a sequence $(q_n)_{n\in \mathbb{N}}$ of integers greater than or equal to $2$. Every $r\in[0,1)$ has a unique expansion of the form 
\[ 
    r=\sum_{n=0}^{+\infty} \frac{a_n}{q_0\cdots q_n}
\]
where $0\le a_n<q_n$ and $a_n\neq q_n-1$ infinitely often. 

\subsection{Cantor real base}
The framework of Cantor real bases unifies $\beta$-expansions with a single real base $\beta$ and Cantor bases built on a sequence of integers. 

\begin{definition}
    A \emph{Cantor real base} is a sequence $\B=(\beta_n)_{n\in\mathbb{N}}$ of real numbers greater than~$1$  such that $\prod_{n=0}^{+\infty}\beta_n=+\infty$. If $\B$ is a constant sequence, we recover the numeration systems in a real base $\beta$. If $\B$ is a sequence of integers, we get back to classical Cantor bases.
\end{definition}
We let $\val_{\B}\colon \mathbb{N}^\mathbb{N}\to\mathbb{R}$ denote the (partial) {\em $\B$-valuation} map, i.e., if $(a_n)_{n\in\mathbb{N}}$ is a sequence of natural numbers, we set 
\[
    \val_{\B}(a_0a_1\cdots):=\sum_{n=0}^{+\infty}\frac{a_n}{\beta_0\cdots \beta_n}
\]
provided that the series converges. A sequence $(a_n)_{n\in\mathbb{N}}$ over $\mathbb{N}$ is said to be a {\em $\B$-representation} of $r\in\mathbb{R}$ whenever $\val_{\B}(a_0a_1\cdots)=r$. 

\begin{definition}[Greedy algorithm]
Let $r\in [0,1]$. Let $a_0,\ldots,a_{N-1}$ be the first~$N\ge 0$ digits of the greedy $\B$-representation of $r$. The next digit $a_N$ of the greedy $\B$-representation of $r$ is the largest integer such that 
\[ 
    \sum_{n=0}^N	\frac{a_n}{\beta_0\cdots \beta_n}\le r.
\] 
For all $n\in \mathbb{N}$, the digit $a_n$ belongs to the alphabet $\{0,\ldots,\lceil{\beta_n}\rceil-1\}$. The corresponding $\B$-representation is called the (greedy) {\em $\B$-expansion} of~$r$, and is denoted by $d_{\B}(r)$.     

The greedy algorithm can be used in order to obtain a distinguished $\B$-representation of $r\in[0,1]$: for all $n\ge 0$, we let $a_n=\lfloor \beta_n \mathsf{T}_{n-1}\cdots \mathsf{T}_0(r)\rfloor$. The corresponding $\B$-representation is called the (greedy) {\em $\B$-expansion} of~$r$, and is denoted by $d_{\B}(r)$.  It is maximal in the lexicographic order among all the $\B$-representations of $r$ and it is characterized by the inequalities
\begin{equation}
    \label{eq:greedy-inequalities-Cantor}
    \sum_{n= \ell}^{+\infty} \frac{a_n}{\beta_0\cdots \beta_n}<\frac{1}{\beta_0\cdots \beta_{\ell-1}} \qquad \forall\ell\ge 1.
\end{equation}
\end{definition}

\begin{definition}[Alternate bases]
   If the Cantor real base $\B=(\beta_0,\ldots,\beta_{p-1})^\omega$ is a periodic sequence, we say that $\B$ is an {\em alternate base}. These systems have been extensively studied in~\cite{CC2021,CCK,CCD,CCMP}. In this paper, we will also encounter a slightly more general case. If $\B$ is ultimately periodic, we say that $\B$ is an {\em ultimately alternate base}.
\end{definition}


\subsection{Our contribution}
Our initial motivation is to study Cantor real bases that are more general than the alternate ones. From the perspective of combinatorics on words, it seems natural to first consider sequences with low factor complexity that have a rich combinatorial structure. Thus, the initial approach of this work was to consider a Cantor real base defined by an automatic sequence. For details on automatic sequences, we refer to~\cite{AS03}.

Let $E$ be a finite set, called {\em alphabet}, of real numbers greater than $1$. We consider Cantor real bases $\B$ belonging to $E^{\mathbb{N}}$. In the initial spirit of Cantor, who considered only products of integers, we first examine automatic Cantor bases~$\B$ over an alphabet included in $\mathbb{N}_{\ge 2}$. In that setting, with~\cref{cor:betahc}, our main result is the following. Consider a family of Cantor bases $\B$ obtained as images under a monoid morphism~$\psi$ over an alphabet included in $\mathbb{N}_{\ge 2}$, not necessarily uniform but with the property that there exists a constant~$\delta$ such that, for all letters $a$, the product of all letters occurring in $\psi(a)$ is equal to $\delta$. In particular, this setting covers the Cantor bases obtained as images of Parikh-constant morphisms. If $\B=\psi(\mathbf{a})$ where $\mathbf{a}$ is any infinite word, then $\B$-expansion of any real number $r$ in $[0,1)$ is obtained from the $\delta$-expansion of~$r$ by applying some well-defined morphisms in the order prescribed by the preimage infinite word $\mathbf{a}$. To simplify the reading of this paper and carry on intuition,  in~\cref{sec:Cantorbaseintegers}, we first address the case of a Thue--Morse Cantor base over $\{2,3\}$.  
Then we prove that, for any fixed rational number $r\in [0,1)$, one can associate a single finite transducer that outputs the $\B$-representation of~$r$ for infinitely many Cantor bases given as input.

\cref{sec:main_transducer} contains, with~\cref{thm:main_transducer}, what we consider to be the main contribution of this work. Let $\delta$ be an algebraic integer and let $E$ be a finite alphabet of Pisot numbers sharing the same algebraic degree and belonging to $\mathbb{Z}[\delta]$. For all $r\in\mathbb{Q}(\delta)\cap [0,1]$, there exists a finite letter-to-letter transducer\footnote{A {\tt Mathematica} implementation is available on \url{https://hdl.handle.net/2268/333750}.} which produces $d_\B(r)$ when fed with any Cantor real base $\B\in E^\mathbb{N}$. This result changes the commonly used perspective. Typically, computational machines are designed for a fixed numeration system and operate on representations of infinitely many real numbers. In contrast, our original approach fixes a single number and considers its representations across infinitely many bases.

In the next section, under the same assumption as above, we show with~\cref{pro:2loops} that the Cantor real bases $\B\in E^\mathbb{N}$ such that the quasi-greedy expansion $d_{\B}^*(r)$ is ultimately periodic are exactly the ultimately alternate bases if and only if the transducer $\mathcal{T}^*_{E,r}$ has a special form. Roughly speaking, it does not contain two closed walks with different inputs and the same output. This $2$-walk property is decidable and we discuss some examples. For instance, in~\cref{ex:smallPisot} we show that $d_\B^*(1)$ is purely periodic for uncountably many aperiodic Cantor bases built from the smallest Pisot number. 

Since the structure of the transducer determines the kinds of $\B$-expansions that we may have, it is natural to look at its connectedness. In the short~\cref{sec:connectedness}, we give examples showing that all situations may appear: the Cantor bases being built with simple and/or non-simple Parry numbers.

Finally, \cref{sec:decidability} explores questions related to decidability. Returning to our initial question when the Cantor base is automatic, we derive from our main theorem on the finiteness of the transducers that if $\B$ is an automatic sequence, then many combinatorial properties of $d_{\B}^*(1)$, such as periodicity or power avoidance, become decidable. Actually, we may ask many questions considered by Shallit and his co-authors \cite{ShallitBook}. This leads us to the following question: given an infinite word --- typically an automatic sequence, since the key issue is how the word is specified using a finite amount of information --- can we determine whether it is $\B$-admissible, i.e., a valid $\B$-expansion? We give several cases where this question admits a positive resolution (for instance, including an example with a transcendental base). This section relies on B\"uchi's theorem on the decidable first-order theory of $\langle\mathbb{N},+,V_b\rangle$ \cite{Buchi1960,ShallitBook}.


\section{Cantor base over an alphabet of integers} \label{sec:Cantorbaseintegers}

The arguments developed in this section are rather straightforward. They rely on elementary arithmetic. However, these results highlight the flavor of this paper: obtaining the $\B$-expansion of a real number by transforming an infinite word into another. In this paper, we use both the notation $\beta_0\beta_1\cdots$ and $(\beta_0,\beta_1,\ldots)$ to designate a Cantor real base $\B=(\beta_n)_{n\in\mathbb{N}}$. The second will be mostly used whenever we need avoiding ambiguity between product of numbers and concatenation of letters.

\subsection{Working out a classical example} 
Consider the finite alphabet of numbers $E=\{2,3\}$ and let ${\mathbf{T}}$ be the celebrated Thue--Morse sequence
\begin{equation*}
    {\mathbf{T}}=23323223322323322\cdots,    
\end{equation*}
over $E$ starting with $2$, which is an infinite fixed point of the morphism 
\begin{equation}
\label{eq:TM-morphism}
    \tau\colon E\to E^*,\ 2\mapsto 23, 3\mapsto 32.
\end{equation} 
The general question is the following one: {\em What can be said about the structure of ${\mathbf{T}}$-expansions?} We give a first answer here and a second one with \cref{cor:automatic}. 

Every real number $r\in[0,1)$ has a (usual) $6$-expansion $(c_n)_{n\in\mathbb{N}}$ in the sense of \eqref{eq:beta-exp} and~\eqref{eq:greedy-inequalities}. 

\begin{definition}
\label{def:h2h3}
Every $c\in\{0,\ldots,5\}$ can be uniquely decomposed as
\[c=a\cdot 3+b\quad \text{ with }\quad a\in\{0,1\},\ b\in\{0,1,2\}.\]
We define the morphism $h_2:c\mapsto ab$, i.e.,
\[h_2\colon 0\mapsto 00,\ 1\mapsto 01,\ 2\mapsto 02,\ 3\mapsto 10,\ 4\mapsto 11,\ 5\mapsto 12.\]
Similarly, every $c\in\{0,\ldots,5\}$ can be uniquely decomposed as
\[c=a \cdot 2+b\quad \text{ with }\quad a\in\{0,1,2\},\ b\in\{0,1\}.\]
We define the morphism $h_3:c\mapsto ab$, i.e.,
\[h_3\colon 0\mapsto 00,\ 1\mapsto 01,\ 2\mapsto 10,\ 3\mapsto 11,\ 4\mapsto 20,\ 5\mapsto 21.\]
\end{definition}

\begin{theorem} \label{thm:TM23}
For all $r\in[0,1)$, the ${\mathbf{T}}$-expansion of $r$ is the sequence 
\[
    h_2(c_0)h_3(c_1)h_3(c_2)h_2(c_3)\cdots 
\]
obtained from $d_6(r)=c_0c_1c_2c_3\cdots$ by application of the morphisms $h_2$ and $h_3$ in the ordering given by the Thue--Morse word~${\mathbf{T}}$. 
\end{theorem}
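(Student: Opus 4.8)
The plan is to check directly that the word
\[
w := h_{t_0}(c_0)\,h_{t_1}(c_1)\,h_{t_2}(c_2)\cdots, \qquad \text{where } \mathbf{T}=t_0t_1t_2\cdots \text{ and } d_6(r)=c_0c_1c_2\cdots,
\]
is a $\mathbf{T}$-representation of $r$ whose digits satisfy the greedy inequalities \eqref{eq:greedy-inequalities-Cantor}; by the lexicographic characterization of the greedy expansion this forces $w=d_{\mathbf{T}}(r)$. Writing the base as $\mathbf{T}=(\beta_m)_{m\in\mathbb{N}}$ and $w=w_0w_1w_2\cdots$, I first record the block structure coming from $\mathbf{T}=\tau(\mathbf{T})$: the letters of $\mathbf{T}$ group into consecutive blocks $\tau(t_n)\in\{23,32\}$, so that $(\beta_{2n},\beta_{2n+1})=\tau(t_n)$, whence $\beta_{2n}=t_n$, $\beta_{2n}\beta_{2n+1}=6$, and $\beta_0\cdots\beta_{2k-1}=6^k$ for every $k$. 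In particular the morphism applied to $c_n$ is $h_{\beta_{2n}}$, and writing $h_{t_n}(c_n)=a_nb_n$ (so $w_{2n}=a_n$, $w_{2n+1}=b_n$) the two digits lie in $\{0,\dots,\beta_{2n}-1\}$ and $\{0,\dots,\beta_{2n+1}-1\}$ respectively, i.e.\ they are admissible for $\mathbf{T}$.

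For the valuation, the grouping by pairs gives
\[
\val_{\mathbf{T}}(w)=\sum_{n=0}^{+\infty}\Bigl(\frac{a_n}{\beta_0\cdots\beta_{2n}}+\frac{b_n}{\beta_0\cdots\beta_{2n+1}}\Bigr)=\sum_{n=0}^{+\infty}\frac{\beta_{2n+1}\,a_n+b_n}{6^{\,n+1}},
\]
using $\beta_0\cdots\beta_{2n}=6^n\beta_{2n}$ and $6/\beta_{2n}=\beta_{2n+1}$. The crux is the identity $\beta_{2n+1}a_n+b_n=c_n$, which is exactly the defining decomposition of $h_2$ when $\beta_{2n}=2$ (so $c_n=3a_n+b_n$) and of $h_3$ when $\beta_{2n}=3$ (so $c_n=2a_n+b_n$). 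Hence $\val_{\mathbf{T}}(w)=\sum_n c_n/6^{n+1}=r$.

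It remains to verify \eqref{eq:greedy-inequalities-Cantor}, which I would split according to the parity of $\ell$. For even $\ell=2k$, the same pairing computation gives $\sum_{m\ge 2k}w_m/(\beta_0\cdots\beta_m)=\sum_{n\ge k}c_n/6^{n+1}$, which is $<6^{-k}=1/(\beta_0\cdots\beta_{2k-1})$ by the greedy inequality \eqref{eq:greedy-inequalities} for $d_6(r)$. For odd $\ell=2k+1$, the tail equals $b_k/6^{k+1}+\sum_{n\ge k+1}c_n/6^{n+1}$, and after multiplying by $6^{k+1}$ the required inequality becomes
\[
b_k+\sum_{n\ge k+1}\frac{c_n}{6^{\,n-k}}<\frac{6}{\beta_{2k}}=\beta_{2k+1}.
\]
By \eqref{eq:greedy-inequalities} at level $k+1$ the sum is $<1$, while $b_k\le\beta_{2k+1}-1$ because $b_k$ is an admissible digit for base $\beta_{2k+1}$; hence the left-hand side is $<b_k+1\le\beta_{2k+1}$, as needed.

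I expect the only genuinely delicate point to be this odd-index inequality: it is where the two ingredients must be combined, namely the digit bound $b_k<\beta_{2k+1}$ furnished by the morphisms $h_2,h_3$ and the base-$6$ greedy inequality one level further along. Everything else is a bookkeeping of indices organized around the two facts $\beta_{2n}\beta_{2n+1}=6$ and $\beta_{2n}=t_n$, which themselves rest only on $\mathbf{T}$ being a fixed point of $\tau$ starting with $2$.
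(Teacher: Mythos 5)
Your proposal is correct and follows essentially the same route as the paper: verify that the pairwise-grouped word is a $\mathbf{T}$-representation via the identity $\beta_{2n+1}a_n+b_n=c_n$, then check the greedy inequalities \eqref{eq:greedy-inequalities-Cantor} by parity of the cut index, with the odd case combining the digit bound $b_k\le\beta_{2k+1}-1$ and the base-$6$ greedy inequality one level further on. The only cosmetic difference is that you treat the odd case uniformly where the paper splits into the subcases $\beta_\ell=2$ and $\beta_\ell=3$.
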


\begin{proof}
Write ${\mathbf{T}}=(\beta_n)_{n\in\mathbb{N}}$, and let 
\[
    \mathbf{y}=y_0y_1y_2\cdots=h_2(c_0)h_3(c_1)h_3(c_2)h_2(c_3)\cdots h_{\beta_n}(c_n)\cdots.
\]
Otherwise stated, we set $h_{\beta_n}(c_n)=y_{2n}y_{2n+1}$ for all $n\ge 0$. We show that $\val_{\mathbf{T}}(\mathbf{y})=r$ and 
\[
    \forall\ell\ge 1,\quad  \sum_{n=\ell}^{+\infty} \frac{y_n}{\prod_{i=0}^n\beta_i}
    <\frac{1}{\prod_{i=0}^{\ell-1} \beta_i}
\]
proving that $\mathbf{y}$ is indeed get the ${\mathbf{T}}$-expansion of $r$. Let us first prove that $\mathbf{y}$ is a $\mathbf{T}$-representation of $r$. We have
\begin{eqnarray*}
    \val_{\mathbf{T}}(\mathbf{y}) 
    &=& \sum_{n= 0}^{+\infty}\left( \frac{y_{2n}}{\prod_{i=0}^{2n}\beta_i}
    +\frac{y_{2n+1}}{\prod_{i=0}^{2n+1}\beta_i}\right)
    =\sum_{n=0}^{+\infty}\frac{1}{6^{n+1}} (y_{2n}\beta_{2n+1}+y_{2n+1}).
\end{eqnarray*}

It suffices to show that $c_n=y_{2n}\beta_{2n+1}+y_{2n+1}$.
We have two cases to consider regarding the value of $\beta_n$. If $\beta_n=2$ then, on the one hand, $c_n= y_{2n}\cdot 3+y_{2n+1}$ and on the other hand, $\tau(\beta_n)=\beta_{2n}\beta_{2n+1}=23$, hence $\beta_{2n+1}=3$. Similarly, if $\beta_n=3$ then $c_n= y_{2n}\cdot 2+y_{2n+1}$ and $\beta_{2n+1}=2$ since $\tau(\beta_n)=\beta_{2n}\beta_{2n+1}=32$.

We now turn to the greediness of the $\mathbf{T}$-representation. Let us first assume that the tail is starting with an even index~$2\ell$. Then
\[
    \sum_{n= 2\ell}^{+\infty} \frac{y_n}{\prod_{i=0}^n\beta_i}
    =\sum_{n= \ell}^{+\infty} \left( \frac{y_{2n}}{\prod_{i=0}^{2n}\beta_i}
    +\frac{y_{2n+1}}{\prod_{i=0}^{2n+1}\beta_i}\right)
    =\sum_{n= \ell}^{+\infty} \frac{c_n}{6^{n+1}}
    <\frac{1}{6^{\ell}}
    =\frac{1}{\prod_{i=0}^{2\ell-1} \beta_i},
\]
where we used the fact that $(c_n)_{n\in\mathbb{N}}$ is the greedy $\delta$-expansion of $r$. Assume now that the tail is starting with an odd index~$2\ell+1$. We get
\[
    \sum_{n=2\ell+1}^{+\infty} \frac{y_n}{\prod_{i=0}^n\beta_i}
    =\frac{y_{2\ell+1}}{\prod_{i=0}^{2\ell+1}\beta_i} 
    + \sum_{n=\ell+1}^{+\infty} \left( \frac{y_{2n}}{\prod_{i=0}^{2n}\beta_i}
    +\frac{y_{2n+1}}{\prod_{i=0}^{2n+1}\beta_i}\right)
    =\frac{y_{2\ell+1}}{6^\ell} +\sum_{n\ge \ell+1} \frac{c_n}{6^{n+1}}.
\]
Again, we have two cases. If $\beta_\ell=2$ then 
$y_{2\ell+1}\le 2$ by definition of the morphism $h_2$. Using that $(c_n)_{n\in\mathbb{N}}$ is a $\delta$-expansion and that $\beta_{2\ell}=2$, we see that the above quantity is less than
\[
    \frac{2}{6^{\ell+1}}+\frac{1}{6^{\ell+1}}
    =\frac{3}{2\cdot 3\cdot 6^\ell}=\frac{1}{\beta_{2\ell}\cdot 6^\ell}
    =\frac{1}{\prod_{i=0}^{2\ell}\beta_i}.
\]
If $\beta_\ell=3$ then $y_{2\ell+1}\le 1$ by definition of the morphism $h_3$. Similarly, the above quantity is less than
\[
    \frac{1}{6^{\ell+1}}+\frac{1}{6^{\ell+1}}
    =\frac{2}{2\cdot 3 \cdot 6^\ell}
    =\frac{1}{\beta_{2\ell}\cdot 6^\ell}
    =\frac{1}{\prod_{i=0}^{2\ell}\beta_i}.
\]
\end{proof}

\subsection{Generalization to Cantor bases built from blocks of constant product}

The Thue--Morse case can be readily generalized to the following family of Cantor bases. 

\begin{definition}
\label{def:blocks-delta}
Let $\delta\ge 2$ be an integer. For each $k\in\mathbb{N}$, let $\ell_k\ge 1$ be an integer and $\gamma_{k,0},\ldots,\gamma_{k,\ell_k-1}\ge 2$ be integers whose product equal to $\delta$: 
\begin{equation}
    \label{eq:product-delta}
    \prod_{j=0}^{\ell_k-1}\gamma_{k,j}=\delta.
\end{equation}
Let $r$ be a real number in $[0,1]$ with $d_\delta(r)=c_0c_1c_2\cdots$. 
For each $k\in\mathbb{N}$, every $c_k$ belongs to $\{0,\ldots,\delta-1\}$ and can be uniquely written as
  \begin{equation}
\label{eq:coeff-x_k}    
    c_k=\sum_{j=0}^{\ell_k-1}a_{k,j}\cdot \gamma_{k,j+1}\cdots \gamma_{k,\ell_k-1}
  \end{equation}
  where for each $j$, the coefficient $a_{k,j}$ is a non-negative integer less than $\gamma_{k,j}$. 
\end{definition}

\begin{theorem}
\label{thm:blocks-delta}
With the notation of~\cref{def:blocks-delta}, consider the Cantor base 
\[
    \B=\beta_0\beta_1\beta_2\cdots=\prod_{k=0}^{+\infty} (\gamma_{k,0}\cdots \gamma_{k,\ell_k-1})
\]
(where the product of the right-hand side is the concatenation product). Then the $\B$-expansion of $r$ is the sequence 
\[
    d_{\B}(r)=\prod_{k=0}^{+\infty} (a_{k,0}\cdots a_{k,\ell_k-1}).
\]
\end{theorem}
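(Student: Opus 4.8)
The plan is to mirror the two-part structure of the proof of \cref{thm:TM23}, replacing the length-two blocks by the variable-length blocks prescribed by the $\ell_k$. I would first introduce the cumulative offsets $L_k=\sum_{m=0}^{k-1}\ell_m$ (with $L_0=0$), so that the $k$-th block of $\B$ occupies the positions $L_k,\ldots,L_k+\ell_k-1$, with $\beta_{L_k+j}=\gamma_{k,j}$, and the candidate output $\mathbf{y}=\prod_{k\ge0}(a_{k,0}\cdots a_{k,\ell_k-1})$ satisfies $y_{L_k+j}=a_{k,j}$. The one structural fact used throughout is that, by \eqref{eq:product-delta}, a full block contributes a factor $\delta$ to the running product, whence $\prod_{i=0}^{L_k-1}\beta_i=\delta^k$ and $\prod_{i=0}^{L_k+j}\beta_i=\delta^k\,\gamma_{k,0}\cdots\gamma_{k,j}$.

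First I would check that $\mathbf{y}$ is a $\B$-representation of $r$. Grouping $\val_\B(\mathbf{y})$ blockwise and substituting the product formula above gives $\val_\B(\mathbf{y})=\sum_{k\ge0}\delta^{-k}\sum_{j=0}^{\ell_k-1}a_{k,j}/(\gamma_{k,0}\cdots\gamma_{k,j})$. The inner sum is exactly $c_k/\delta$: dividing the mixed-radix identity \eqref{eq:coeff-x_k} by $\delta=\gamma_{k,0}\cdots\gamma_{k,\ell_k-1}$ turns each term $a_{k,j}\gamma_{k,j+1}\cdots\gamma_{k,\ell_k-1}$ into $a_{k,j}/(\gamma_{k,0}\cdots\gamma_{k,j})$. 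Hence $\val_\B(\mathbf{y})=\sum_{k\ge0}c_k/\delta^{k+1}=r$, since $(c_n)$ is the $\delta$-expansion of $r$.

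Next I would verify the greedy inequalities \eqref{eq:greedy-inequalities-Cantor}. Write the tail index as $\ell=L_k+t$ with $0\le t<\ell_k$. When $t=0$ (the tail starts at a block boundary) the computation of the first part, restricted to $m\ge k$, yields the tail $\sum_{m\ge k}c_m/\delta^{m+1}$, which is $<\delta^{-k}=1/\prod_{i=0}^{L_k-1}\beta_i$ directly from the $\delta$-greedy inequality \eqref{eq:greedy-inequalities}. When $t\ge1$ the tail splits as a \emph{partial} block $\delta^{-k}\sum_{j=t}^{\ell_k-1}a_{k,j}/(\gamma_{k,0}\cdots\gamma_{k,j})$ plus the full tail $\sum_{m\ge k+1}c_m/\delta^{m+1}$. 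The second term is $<\delta^{-(k+1)}$ by \eqref{eq:greedy-inequalities}, and the first is bounded using $a_{k,j}\le\gamma_{k,j}-1$ together with the telescoping identity $(\gamma_{k,j}-1)/(\gamma_{k,0}\cdots\gamma_{k,j})=1/(\gamma_{k,0}\cdots\gamma_{k,j-1})-1/(\gamma_{k,0}\cdots\gamma_{k,j})$, which collapses the partial block to $1/(\gamma_{k,0}\cdots\gamma_{k,t-1})-1/\delta$. Adding the two bounds, the terms $-\delta^{-(k+1)}$ and $+\delta^{-(k+1)}$ cancel, leaving a quantity strictly less than $\delta^{-k}/(\gamma_{k,0}\cdots\gamma_{k,t-1})=1/\prod_{i=0}^{L_k+t-1}\beta_i$, as required.

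The only genuinely new ingredient beyond \cref{thm:TM23} is this partial-block estimate: in the Thue--Morse case each block has length two, so the ``middle of a block'' amounts to a single leftover digit bounded by $h_2$ or $h_3$, whereas here an arbitrarily long suffix of a block must be controlled, which is precisely what the telescoping sum of the digit bounds $a_{k,j}\le\gamma_{k,j}-1$ accomplishes. I expect no essential difficulty elsewhere: the blockwise regrouping and the reduction to the $\delta$-expansion identities are routine once the product normalization $\prod_{i=0}^{L_k-1}\beta_i=\delta^k$ is in place.
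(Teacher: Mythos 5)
Your proposal is correct and follows essentially the same two-part argument as the paper: blockwise regrouping using $\prod_{i=0}^{L_k-1}\beta_i=\delta^k$ to reduce the value computation to the $\delta$-expansion identity, then splitting each tail into a partial block plus the $\delta$-greedy tail. Your telescoping identity for the partial block is just an explicit proof of the bound $\sum_{j=\ell'}^{\ell_K-1}a_{K,j}\,\gamma_{K,j+1}\cdots\gamma_{K,\ell_K-1}\le\gamma_{K,\ell'}\cdots\gamma_{K,\ell_K-1}-1$ that the paper invokes directly from the greediness of the mixed-radix decomposition, so the two arguments coincide.
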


\begin{proof}
The proof follows the same lines as the proof of~\cref{thm:TM23}. We have $d_\delta(r)=c_0c_1c_2\cdots$, and we let 
\[
    \mathbf{y}=y_0y_1y_2\cdots
   =\prod_{k=0}^{+\infty} (a_{k,0}\cdots a_{k,\ell_k-1}).
\]
First, we show that $\mathbf{y}$ is a $\B$-representation of $r$. By decomposing into blocks of length $\ell_k$ and by using~\eqref{eq:product-delta} and~\eqref{eq:coeff-x_k}, we compute
\begin{align*}
    \val_{\B}(\mathbf{y})
    &=\sum_{k=0}^{+\infty} \sum_{j=0}^{\ell_k-1}\frac{y_{\ell_0+\cdots+\ell_{k-1}+j}}{\prod_{i=0}^{\ell_0+\cdots+\ell_{k-1}+j}\beta_i}
    =\sum_{k=0}^{+\infty} \frac{1}{\delta^k}\sum_{j=0}^{\ell_k-1}\frac{a_{k,j}}{\prod_{i=0}^j\gamma_{k,i}},\\
    &=\sum_{k=0}^{+\infty} \frac{1}{\delta^{k+1}}\sum_{j=0}^{\ell_k-1}(a_{k,j}\cdot \gamma_{k,j+1} \cdots \gamma_{k,\ell_k-1})
    =\sum_{k=0}^{+\infty}\frac{c_k}{\delta^{k+1}}=r.
\end{align*}

Now let us check the greediness of the expansion, i.e., the inequalities given in~\eqref{eq:greedy-inequalities-Cantor}. Let $\ell\ge 1$. We have to prove that 
\[
    \sum_{n= \ell}^{+\infty} \frac{y_{n}}{\prod_{i=0}^n\beta_i}
    <\frac{1}{\prod_{i=0}^{\ell -1}\beta_i}.
\]
Again, we need to decompose our computation into blocks of length $\ell_k$. There exists a unique $K\in\mathbb{N}$ such that 
\[
    \sum_{k=0}^{K-1}\ell_k\le \ell< \sum_{k=0}^K\ell_k.
\]
Let 
\[
    \ell'=\ell-\sum_{k=0}^{K-1}\ell_k.
\]
Thus, $\ell'\in\{0,\ldots,\ell_K-1\}$. By the same reasoning as above, we get
\begin{equation*}
  \label{eq:part1}
  \sum_{n=\ell}^{+\infty} \frac{y_n}{\prod_{i=0}^n\beta_i}
  =\frac{1}{\delta^{K+1}} \sum_{j=\ell'}^{\ell_K-1} a_{K,j}\cdot \gamma_{K,j+1}\cdots\gamma_{K,\ell_{K}-1}
  +\sum_{k=K+1}^{+\infty}\frac{c_k}{\delta^{k+1}}.
\end{equation*}
Since $c_0c_1c_2\cdots$ is the $\delta$-expansion of $r$, we get from~\eqref{eq:greedy-inequalities} that 
\[
    \sum_{k=K+1}^{+\infty}\frac{c_k}{\delta^{k+1}}<\frac{1}{\delta^{K+1}}.
\]
The greediness of~\eqref{eq:coeff-x_k} implies that
\[
    \sum_{j=\ell'}^{\ell_K-1} a_{K,j}\cdot \gamma_{K,j+1}\cdots\gamma_{K,\ell_K-1}
    \le \gamma_{K,\ell'}\cdots\gamma_{K,\ell_K-1}-1.
\]  
Putting together the latter two inequalities, we get
\[
     \sum_{n= \ell}^{+\infty} \frac{y_{n}}{\prod_{i=0}^n\beta_i}
     <\frac{\gamma_{K,\ell'}\cdots\gamma_{K,\ell_{K}-1}}{\delta^{K+1}}
     =\frac{1}{\delta^K\gamma_{K,0}\cdots\gamma_{K,\ell'-1}}
     =\frac{1}{\prod_{i=0}^{\ell-1}\beta_i}.
\]
\end{proof}

As a particular case, this result applies to the following situation. 

\begin{definition}
\label{def:betahc}
Let $\delta\geq 2$ be an integer, and let $A,E\subset \mathbb{N}_{\ge 2}$ be finite alphabets. Let $\psi\colon A^* \to E^*$ be a morphism with the property that for each letter $a\in A$, the product of the letters occurring in the image $\psi(a)$ is equal to $\delta$: if $\psi(a)=b_0\cdots b_{\ell-1}$ then 
\[
    \prod_{j=0}^{\ell-1}b_j=\delta.
\]
For each letter $a\in A$, we define a morphism $h_a$ as follows. Let $a\in A$ and $\psi(a)=b_0\cdots b_{\ell-1}$. Every $c\in\{0,\ldots,\delta-1\}$ can be uniquely written as
  \[
    c=\sum_{j=0}^{\ell-1}c_j\cdot b_{j+1}\cdots b_{\ell-1}
  \]
  where $c_0,\ldots,c_{\ell-1}$ are non-negative integers such that $c_j\le b_j-1$ for all $j$. This defines a uniform morphism 
  \begin{equation*}
    \label{eq:hc}
    h_a\colon\{0,\ldots,\delta-1\}^*\to \{0,\ldots,(\max E)-1\}^*,\ c\mapsto c_0\cdots c_{\ell-1}
   \end{equation*}
   associated with each $a\in A$.
\end{definition}

As an example, take $\delta=72$, $A=\{2,3,4\}$ and $E=\{2,3,4,6\}$. Consider the morphism $\psi\colon 2\mapsto 634,\ 3\mapsto 3243,\ 4\mapsto 4332$. For each $a\in A$, we have $h_a\colon \{0,\ldots,71\}^*\to \{0,\ldots,5\}^*$. Since 
\begin{align*}
    &61=5 \cdot 12+ 0\cdot 4 +1, \\
    &61=2\cdot 24 +1\cdot 12 +0\cdot 3 +1, \\
    &61=3\cdot 18 +1\cdot 6 +0\cdot 2+1,
\end{align*}
we have $h_2(61)=501$, $h_3(61)=2101$ and $h_4(61)=3101$. 

Note that any Parikh-constant morphism obviously satisfies this property, but for our considerations, we do not even need $\psi$ to be a uniform morphism. 

\begin{corollary}
    \label{cor:betahc}
With the notation of~\cref{def:betahc}, consider a Cantor base $\B=\psi(a_0a_1a_2\cdots)$ where $a_0a_1a_2\cdots$ is any sequence over $A$. For any real number $r\in [0,1]$, the $\B$-expansion of $r$ is the sequence 
$h_{a_0}(c_0)h_{a_1}(c_1)h_{a_2}(c_2)\cdots$ obtained from $d_\delta(r)=c_0c_1c_2\cdots$ by application of the morphisms $h_a$, $a\in A$, in the ordering given by the infinite word $a_0a_1a_2\cdots$.
\end{corollary}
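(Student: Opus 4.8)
The plan is to show that \cref{cor:betahc} is just the specialization of \cref{thm:blocks-delta} to the situation where the block data are dictated by a morphism $\psi$ rather than chosen freely at each step. The corollary introduces no genuinely new content; the work is entirely in matching up the two sets of notation and verifying that the constant-product hypothesis of \cref{def:betahc} reproduces exactly the hypothesis~\eqref{eq:product-delta} of \cref{def:blocks-delta}.

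Concretely, I would begin by fixing the input word $a_0a_1a_2\cdots$ over $A$ and, for each $k\in\mathbb{N}$, setting $\ell_k$ to be the length of $\psi(a_k)$ and $\gamma_{k,0}\cdots\gamma_{k,\ell_k-1}:=\psi(a_k)$, so that $\gamma_{k,j}$ is the $j$-th letter of $\psi(a_k)$. The defining property of $\psi$ in \cref{def:betahc}, namely that the product of the letters occurring in $\psi(a_k)$ equals $\delta$, is precisely condition~\eqref{eq:product-delta}. With this dictionary, the Cantor base $\B=\psi(a_0a_1a_2\cdots)$ obtained by concatenating the images coincides term-by-term with the base $\B=\prod_{k=0}^{+\infty}(\gamma_{k,0}\cdots\gamma_{k,\ell_k-1})$ of \cref{thm:blocks-delta}.

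Next I would check that the digit decomposition built into $h_{a_k}$ agrees with~\eqref{eq:coeff-x_k}. In \cref{def:betahc}, each $c\in\{0,\ldots,\delta-1\}$ is written as $c=\sum_{j=0}^{\ell-1}c_j\, b_{j+1}\cdots b_{\ell-1}$ with $c_j\le b_j-1$, which for $c=c_k$ and $b_j=\gamma_{k,j}$ is exactly the expansion~\eqref{eq:coeff-x_k} with $a_{k,j}=c_j$. Hence the output block $h_{a_k}(c_k)=a_{k,0}\cdots a_{k,\ell_k-1}$ matches the block produced in \cref{thm:blocks-delta}. Since $d_\delta(r)=c_0c_1c_2\cdots$ is the same $\delta$-expansion in both statements, the word $h_{a_0}(c_0)h_{a_1}(c_1)\cdots$ is literally the concatenation $\prod_{k=0}^{+\infty}(a_{k,0}\cdots a_{k,\ell_k-1})$, and \cref{thm:blocks-delta} identifies this as $d_{\B}(r)$.

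There is no real obstacle here: the only point requiring a word of care is that the block lengths $\ell_k$ need not be constant, so the alphabet of the blocks $\gamma_{k,j}$ may genuinely depend on $k$ through the letter $a_k$; but since $A$ is finite and each $\psi(a)$ is fixed, there are only finitely many distinct block patterns, and \cref{thm:blocks-delta} makes no uniformity assumption on the $\ell_k$, so the hypotheses are met verbatim. I would therefore present the proof as a short verification that \cref{def:betahc} is an instance of \cref{def:blocks-delta}, followed by a one-line appeal to \cref{thm:blocks-delta}.
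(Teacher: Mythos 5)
Your proposal is correct and matches the paper's intent exactly: the paper introduces \cref{def:betahc} with the phrase ``as a particular case, this result applies to the following situation'' and leaves the corollary as an immediate specialization of \cref{thm:blocks-delta}, which is precisely the dictionary ($\ell_k=|\psi(a_k)|$, $\gamma_{k,j}$ the $j$-th letter of $\psi(a_k)$, $a_{k,j}=c_j$) that you spell out. No gap; your version just makes the routine verification explicit.
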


 Observe that in~\cref{thm:TM23}, the morphisms $h_2$ and $h_3$ are applied in the ordering given by the Cantor base $23323223\cdots$ itself. This is due to the fact that the Thue--Morse word is a fixed point of the morphism $\tau$, which is a stronger property than the hypotheses of~\cref{cor:betahc}.

Even though the example below may seem unnecessary, it already highlights the philosophy developed in the next section: having a transducer that takes a Cantor base as input and outputs the corresponding $\B$-expansion of a fixed real number.

\begin{example}[A first transducer]
\label{exa:first_transducer}
Consider the rational number $r=932/3885$ whose $6$-expansion is the ultimately periodic word $d_6(r)=1(2345)^\omega$. This finite information is stored in a ``frying pan'' transducer depicted in~\cref{fig:transducer-fig} where $a\in\{2,3\}$ and $h_a$ are the two morphisms from~\cref{def:h2h3}. 
\begin{figure}[ht]
   \begin{center}
    \begin{tikzpicture}[->,thick,main node/.style={circle,draw,minimum size=12pt,inner sep=2pt}]
  
        \node[main node] (A) at (90:2) {3};
        \node[main node] (B) at (0:2) {4};
        \node[main node] (C) at (-90:2) {5};
        \node[main node] (D) at (180:2) {2};
        \node[main node] (E) at (180:5) {1};

        \draw[->] (A) -- (B) node[pos=.4, right, yshift=2pt] {$a|h_a(3)$};
        \draw[->] (B) -- (C) node[pos=.6, right, yshift=-3pt] {$a|h_a(4)$};
        \draw[->] (C) -- (D) node[pos=.4, left, xshift=-3pt] {$a|h_a(5)$};
        \draw[->] (D) -- (A) node[pos=.6, left, yshift=2pt] {$a|h_a(2)$};
        
        \draw[->] (E) -- (D) node[midway, above] {$a|h_a(1)$};
    \end{tikzpicture}
\end{center}
 \caption{A finite transducer for $d_{\mathbf{T}}(932/3885)$, $a\in\{2,3\}$.}
    \label{fig:transducer-fig}
\end{figure}
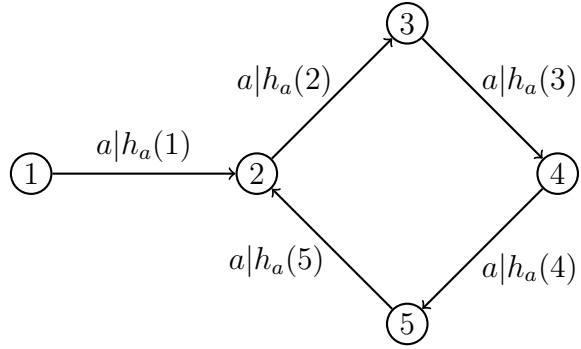
A label $a|w$ means that $a$ is read as input and $w$ is the corresponding output word. This means that edges have two input/output labels, one for each letter in $\{2,3\}$. \cref{thm:TM23} can be applied as follows. Starting from the initial state $1$, read the Thue--Morse word as input and follow the transitions of the transducer. One trivially obtains 
\[
    d_{\mathbf{T}}(r)
    =h_2(1)h_3(2)h_3(3)h_2(4)h_3(5)h_2(2)h_2(3)h_3(4)\cdots=0110111121021020\cdots
\] 
as output. More interestingly, due to~\cref{cor:betahc}, the same transducer will produce $d_\B(932/3885)$ whenever $\B$ is a Cantor base of the form $\tau(a_0a_1a_2\cdots)$ where $a_i\in\{2,3\}$ and $\tau$ is the morphism given in~\eqref{eq:TM-morphism}. Indeed, with the input sequence $a_0a_1a_2\cdots$, the output is $h_{a_0}(1)h_{a_1}(2)h_{a_2}(3)h_{a_3}(4)h_{a_4}(5)h_{a_5}(2)\cdots$. We thus have a single machine associated with a given number (with a periodic expansion) that allows us to produce the $\B$-representation of this number for an infinite number of Cantor bases $\B$ of the form $\tau(a_0a_1a_2\cdots)$ for all $a_i\in\{2,3\}$. 

Note that the transducer in \cref{fig:transducer-fig} is not letter-to-letter: when reading a symbol $a$, the output has length~$2$. In this paper, all the other transducers will be letter-to-letter. Since $\tau,h_2,h_3$ are $2$-uniform, we can transform this transducer into a letter-to-letter transducer by replacing every edge of the form
\[
    q \stackrel{2|h_2(k)}{\xrightarrow{\hspace{2 cm}}} s 
    \quad\text{ with }\quad 
    q \stackrel{2|[h_2(k)]_0}{\xrightarrow{\hspace{2 cm}}} p \stackrel{3|[h_2(k)]_1} {\xrightarrow{\hspace{2 cm}}} s,
\]
and 
\[
    q \stackrel{3|h_3(k)}{\xrightarrow{\hspace{2 cm}}} s 
    \quad\text{ with }\quad 
    q \stackrel{3|[h_3(k)]_0}{\xrightarrow{\hspace{2 cm}}} p' \stackrel{2|[h_3(k)]_1} {\xrightarrow{\hspace{2 cm}}} s,
\]
where $p,p'$ are two new intermediate states and $[w]_i$ denotes the $i$-th symbol occurring in $w$.

With this new transducer, the input to consider is $\tau(a_0a_1a_2\cdots)$ and not the preimage as above. Indeed, we have replace an edge of label $2$ (resp.\ $3$) with two edges of labels $2$ and $3$ (resp.\ $3$ and $2$).
\end{example}

Since rational numbers are characterized by an ultimately periodic expansion in an integer base, the following observation is obvious. 
\begin{corollary}
    With the assumptions of~\cref{cor:betahc}. An infinite word is the $\B$-expansion of a rational number in $[0,1]$ if and only if it is the image by application of the morphisms $h_a$, $a\in A$, in the ordering given by $a_0a_1a_2\cdots$ where $\B=\psi(a_0a_1a_2\cdots)$, of an ultimately periodic word over $\{0,\ldots,\delta-1\}$.
\end{corollary}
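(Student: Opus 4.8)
The plan is to combine \cref{cor:betahc} with the classical characterization of rational numbers via ultimately periodic base-$\delta$ expansions, so the argument should be essentially a two-line equivalence once the pieces are assembled. First I would fix the setup: we are in the situation of \cref{cor:betahc}, with $\B=\psi(a_0a_1a_2\cdots)$, and we write $\Phi$ for the map that sends an infinite word $c_0c_1c_2\cdots$ over $\{0,\ldots,\delta-1\}$ to $h_{a_0}(c_0)h_{a_1}(c_1)h_{a_2}(c_2)\cdots$, i.e.\ application of the morphisms $h_a$ in the ordering prescribed by $a_0a_1a_2\cdots$. With this notation, \cref{cor:betahc} says precisely that for every $r\in[0,1]$ we have $d_{\B}(r)=\Phi(d_\delta(r))$.

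For the forward direction, suppose an infinite word $\mathbf{y}$ is the $\B$-expansion of some rational $r\in[0,1]$. Then $\mathbf{y}=d_{\B}(r)=\Phi(d_\delta(r))$ by \cref{cor:betahc}, and since $r$ is rational and $\delta\ge 2$ is an integer, its $\delta$-expansion $d_\delta(r)$ is ultimately periodic by the standard characterization of rationals in an integer base. Hence $\mathbf{y}$ is the image under $\Phi$ of an ultimately periodic word over $\{0,\ldots,\delta-1\}$, as required. Conversely, suppose $\mathbf{y}=\Phi(\mathbf{c})$ for some ultimately periodic word $\mathbf{c}=c_0c_1c_2\cdots$ over $\{0,\ldots,\delta-1\}$. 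The only genuine subtlety is that an arbitrary ultimately periodic word over $\{0,\ldots,\delta-1\}$ need not be the \emph{greedy} $\delta$-expansion of a real number (for instance it could end in the forbidden all-$(\delta-1)$ tail). I would handle this by noting that $\val_\delta(\mathbf{c})=:r$ is a well-defined real number in $[0,1]$ whose value is rational, being a finite rational combination of the terms of an eventually geometric series. The rational number $r$ then has $d_\delta(r)$ ultimately periodic, and $d_{\B}(r)=\Phi(d_\delta(r))$ is the $\B$-expansion of a rational number.

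The step I expect to require the most care is matching $\mathbf{y}=\Phi(\mathbf{c})$ with a genuine greedy $\B$-expansion in the converse direction: if $\mathbf{c}$ is ultimately periodic but is \emph{not} itself the greedy $\delta$-expansion $d_\delta(r)$, then $\Phi(\mathbf{c})$ need not coincide with $d_{\B}(r)$, so $\mathbf{y}$ would be a $\B$-representation but perhaps not the greedy one. The cleanest fix is to read the corollary's phrase ``$\B$-expansion of a rational number'' as asserting the existence of a rational whose greedy $\delta$-expansion is the preimage; one then restricts to $\mathbf{c}=d_\delta(r)$ for $r\in\mathbb{Q}\cap[0,1]$, in which case $\Phi(\mathbf{c})=d_{\B}(r)$ is the greedy $\B$-expansion by \cref{cor:betahc}, and every rational arises this way since $d_\delta$ is a bijection between $\mathbb{Q}\cap[0,1]$ and the ultimately periodic greedy $\delta$-expansions. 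Thus both directions reduce to the single identity $d_{\B}(r)=\Phi(d_\delta(r))$ together with the classical fact that rationality is equivalent to ultimate periodicity of $d_\delta(r)$, and no further computation is needed.
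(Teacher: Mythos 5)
Your argument is exactly the paper's: the paper offers no separate proof, presenting the corollary as an immediate consequence of \cref{cor:betahc} combined with the classical fact that the rationals in $[0,1]$ are precisely the numbers whose greedy $\delta$-expansion is ultimately periodic. Your extra paragraph flagging that an arbitrary ultimately periodic word over $\{0,\ldots,\delta-1\}$ need not be a greedy $\delta$-expansion (so that $\Phi(\mathbf{c})$ is then only a $\B$-representation, not the $\B$-expansion) correctly identifies a small imprecision in the statement and resolves it in the intended way, without changing the substance of the argument.
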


More interestingly, as in~\cref{exa:first_transducer}, we can associate with any rational number~$r\in [0,1)$ a unique transducer that allows us to produce the $\B$-representation of~$r$ for infinitely many Cantor bases. With the notation of~\cref{cor:betahc}, the input sequence can be any infinite word $a_0a_1a_2\cdots$ corresponding to the Cantor base $\B=\psi(a_0a_1a_2\cdots)$. We can then transform this transducer so that it becomes letter-to-letter.


\section{One transducer for uncountably many Cantor real bases}\label{sec:main_transducer}

For a general reference on transducers, see~\cite{Saka}. From now on  transducers that we are considering are letter-to-letter. We will build infinite transducers that mimic the greedy and quasigreedy algorithms that compute particular $\B$-representations. Then we show that, under some assumptions, only a finite part of these machines is visited. This finiteness property implies that if $\B$ is an automatic Cantor base, then the $\B$-expansion of $1$ is also automatic. In the second part of this section, we build various examples.

\subsection{Two infinite transducers}
We make use of the notation~$\mathsf{T}_\beta$ from \eqref{eq:Tbeta}.

\begin{definition}[Greedy transducer]
Let $E\subset\mathbb{R}_{>1}$ be a finite alphabet. 
We define an infinite transducer denoted by~$\mathcal{T}_E$. 
\begin{itemize}
\item The set of states is $[0,1]$; 
\item The input alphabet is $E$;
\item The output alphabet is $\mathbb{N}$;
\item For all states $r\in [0,1]$ and all letters $\beta \in E$, there is a transition
  \[
    r\stackrel{\beta \,|\, \lfloor \beta\, r\rfloor}{\xrightarrow{\hspace{3 cm}}} \mathsf{T}_{\beta}(r)
    \]
 \end{itemize}
\end{definition}

It is often useful to also consider the quasi-greedy expansions. When the $\B$-expansion of $1$ is finite, Charlier and Cisternino introduced the quasi-greedy $\B$-expansion of $1$, see~\cite{CC2021}. For our purpose, we extend this notion to any element of the unit interval $[0,1]$. The \emph{quasi-greedy} $\B$-expansion of $r\in (0,1]$ is defined by 
\[
    d_\B^*(r)=\lim_{z\to r^-} d_\B(z),
\]
where the limit on the r.h.s.\ is for the product topology on the set of infinite words. It will be convenient to also set $d_\B^*(0)=0^\omega$.

We let $\sigma$ denote the {\em shift operator} on sequences, i.e., for a sequence $(a_n)_{n\in\mathbb{N}}$, we have $\sigma((a_n)_{n\in\mathbb{N}})=(a_{n+1})_{n\in\mathbb{N}}$. In particular, if $\B$ is a Cantor base, then $\sigma(\B)$ is again a Cantor base. The quasi-greedy $\B$-expansion of $r\in[0,1]$ can be recursively computed as
 \[
    d_{\B}^*(r):=\begin{cases}
        d_{\B}(r), & \text{if $d_{\B}(r)$ is infinite or equal to $0^\omega$}; \\ 
        a_0 \cdots a_{\ell-2}(a_{\ell-1}-1)d_{\sigma^{\ell}(\B)}^*(1), &\text{if $d_{\B}(r)= a_0 \cdots a_{\ell-1}$ with $a_{\ell-1}>0$}.
    \end{cases}  
\] 

\begin{definition}[Quasi-greedy transducer]
As a quasi-greedy variant, instead of $\mathsf{T}_\beta$ we consider the transformation 
\[
    \mathsf{T}_\beta^*\colon [0,1]\to (0,1],\ 
    \begin{cases}
        r\mapsto \beta r -\lceil \beta r-1\rceil, & \text{if } r\ne 0;\\
        0,   & \text{if } r=0,
    \end{cases}
\]
and the corresponding infinite transducer~$\mathcal{T}^*_E$ whose transitions are
\[r\stackrel{\beta \, |\, \lceil \beta\, r-1\rceil}{\xrightarrow{\hspace{3 cm}}} \mathsf{T}_{\beta}^*(r).\]  
\end{definition}


It is important to note that both transducers $\mathcal{T}_E$ and $\mathcal{T}^*_E$ only rely on the alphabet $E$. In particular, they are independent of any Cantor real base $\B\in E^{\mathbb{N}}$ given as input. The following proposition is a direct consequence of the definitions. 

\begin{proposition}
Let $E\subset \mathbb{R}_{>1}$ be a finite alphabet and let $\B \in E^{\mathbb{N}}$ be a Cantor real base. The output of the transducer $\mathcal{T}_E$ (resp.\ $\mathcal{T}^*_E$) when reading $\B$ from a state $r\in [0,1]$ is the $\B$-expansion (resp.\ quasi-greedy $\B$-expansion) of~$r$.
\end{proposition}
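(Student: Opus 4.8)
The plan is to show that the transducer $\mathcal{T}_E$ (resp.\ $\mathcal{T}^*_E$), started in state $r\in[0,1]$ and fed the input sequence $\B=(\beta_n)_{n\in\mathbb{N}}$, produces exactly the digit sequence prescribed by the greedy algorithm (resp.\ the quasi-greedy recursion). Since the transducers are deterministic, reading $\B$ from state $r$ traces out a unique infinite walk. I would set $r_0:=r$ and define $r_{n+1}:=\mathsf{T}_{\beta_n}(r_n)$ for the greedy case (resp.\ $r_{n+1}:=\mathsf{T}^*_{\beta_n}(r_n)$ for the quasi-greedy case), so that $r_n$ is the state visited after reading $\beta_0\cdots\beta_{n-1}$. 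By the definition of the transitions, the output letter emitted at step $n$ is $b_n:=\lfloor \beta_n\, r_n\rfloor$ (resp.\ $b_n:=\lceil \beta_n\, r_n-1\rceil$). The whole claim then reduces to verifying that the sequence $(b_n)_{n\in\mathbb{N}}$ so produced coincides with $d_\B(r)$ (resp.\ $d_\B^*(r)$).

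For the greedy case, the key step is to recognize that the recurrence $r_{n+1}=\mathsf{T}_{\beta_n}(r_n)=\beta_n r_n-\lfloor\beta_n r_n\rfloor$ is precisely the iteration $\mathsf{T}_{n-1}\cdots\mathsf{T}_0(r)$ appearing in the definition of the greedy $\B$-expansion, once one notes that each $\mathsf{T}_k$ there is $\mathsf{T}_{\beta_k}$ applied to the orbit. Thus by induction $r_n=\mathsf{T}_{\beta_{n-1}}\cdots\mathsf{T}_{\beta_0}(r)$, and consequently $b_n=\lfloor\beta_n\, r_n\rfloor=\lfloor\beta_n\,\mathsf{T}_{n-1}\cdots\mathsf{T}_0(r)\rfloor=a_n$, which is exactly the formula defining $d_\B(r)$. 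Hence the output equals $d_\B(r)$. I would present this as a short induction, the base case being $r_0=r$.

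For the quasi-greedy case I would argue analogously, but here the identification with the recursive definition of $d_\B^*$ requires slightly more care, and this is the step I expect to be the main obstacle. The transducer $\mathcal{T}^*_E$ deterministically iterates $\mathsf{T}^*_\beta$, so it yields a single infinite sequence; on the other hand, the definition of $d_\B^*(r)$ given earlier is phrased as a limit $\lim_{z\to r^-}d_\B(z)$ together with a case-based recursion. The cleanest route is to verify that the map $\mathsf{T}^*_\beta$ and the digit rule $\lceil\beta r-1\rceil$ together reproduce the piecewise recursive description of $d_\B^*$: when $d_\B(r)$ is infinite or $0^\omega$, the greedy and quasi-greedy transformations agree and the output coincides with the greedy one; when $d_\B(r)$ is finite ending in $a_{\ell-1}>0$, one checks that at the last nonzero position the ceiling rule outputs $a_{\ell-1}-1$ and sends the state to $\mathsf{T}^*_{\beta_{\ell-1}}(r_{\ell-1})=1$, after which reading $\sigma^\ell(\B)$ from state $1$ produces $d^*_{\sigma^\ell(\B)}(1)$. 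One must confirm that $\mathsf{T}^*_\beta$ indeed lands on $1$ rather than $0$ at such a transition, which is exactly why the codomain is taken to be $(0,1]$ and why the case $r=0$ is singled out. Matching these transitions to the stated recursion, again by induction on the prefix length, completes the proof; the routine floor/ceiling identities and the convergence of the defining limit can be invoked from the preceding material without detailed recomputation.
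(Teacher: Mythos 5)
Your proposal is correct and follows essentially the same route as the paper: the paper's proof likewise observes that the states visited are the iterates $\mathsf{T}_{\beta_{n-1}}\cdots\mathsf{T}_{\beta_0}(r)$ (resp.\ the $\mathsf{T}^*$-iterates) and that the emitted digits are exactly $\lfloor\beta_n\,\mathsf{T}_{\beta_{n-1}}\cdots\mathsf{T}_{\beta_0}(r)\rfloor$ and $\lceil\beta_n\,\mathsf{T}^*_{\beta_{n-1}}\cdots\mathsf{T}^*_{\beta_0}(r)-1\rceil$. If anything, you are more careful than the paper on the quasi-greedy side, where the paper simply asserts the digit formula while you explicitly reconcile the $\mathsf{T}^*_\beta$-iteration (landing on $1$ at an integer hit) with the case-based recursive definition of $d_\B^*$.
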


\begin{proof}
    States of the transducer simply represent the memory of the considered intermediate values obtained when performing the (quasi-)greedy algorithm and the labels refer to the reading of the input base and the corresponding output digit. More formally, consider $r\in[0,1]$. Write $\B=(\beta_n)_{n\in\mathbb{N}}$, $d_{\B}(r)=(a_n)_{n\in\mathbb{N}}$ and $d^*_{\B}(r)=(b_n)_{n\in\mathbb{N}}$. Then $a_0=\lfloor \beta_0 \rfloor$, $b_0=\lceil \beta_0 -1 \rceil$, and for all $n\ge 1$, the digits $a_n$ and $b_n$ are computed as 
    \[
        a_n=\lfloor \beta_n\mathsf{T}_{\beta_{n-1}}\cdots\mathsf{T}_{\beta_0}(r)\rfloor\quad \text{ and }\quad
        b_n=\lceil  \beta_n\mathsf{T}_{\beta_{n-1}}\cdots\mathsf{T}_{\beta_0}(r)-1\rceil.
    \]
\end{proof}

\begin{definition}
  When considering a given $r\in [0,1]$ as initial state, the part of $\mathcal{T}_E$ (resp.\ $\mathcal{T}^*_E$) that can be reached from~$r$ is a transducer that we denote by~$\mathcal{T}_{E,r}$ (resp.~$\mathcal{T}^*_{E,r}$).
\end{definition}

\begin{proposition}
    \label{prop:TE-TE*-simultanement-finis}
    Let $E\subset \mathbb{R}_{>1}$ be a finite alphabet and let $r\in[0,1]$. The transducers $\mathcal{T}_{E,r}$ and $\mathcal{T}^*_{E,r}$ are either both finite or both infinite.
    \end{proposition}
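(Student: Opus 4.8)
The plan is to show that the state sets reachable from $r$ in the two transducers coincide, which immediately gives the claimed equivalence. Recall that both $\mathcal{T}_{E,r}$ and $\mathcal{T}^*_{E,r}$ have their states among $[0,1]$, and the only difference between them is the transition map used: $\mathsf{T}_\beta$ versus $\mathsf{T}_\beta^*$. So the whole matter reduces to comparing the orbits of $r$ under the semigroup of maps generated by $\{\mathsf{T}_\beta : \beta\in E\}$ on the one hand, and by $\{\mathsf{T}_\beta^* : \beta\in E\}$ on the other.

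First I would pin down the precise relationship between $\mathsf{T}_\beta$ and $\mathsf{T}_\beta^*$. For $r\in(0,1]$ with $\beta r\notin\mathbb{Z}$, we have $\lceil \beta r -1\rceil = \lfloor \beta r\rfloor$, hence $\mathsf{T}_\beta^*(r)=\mathsf{T}_\beta(r)$: the two maps agree. They differ only at the countably many points where $\beta r\in\mathbb{Z}$ (and at $r=0$ versus the boundary behaviour). At such a point, $\mathsf{T}_\beta(r)=0$ while $\mathsf{T}_\beta^*(r)=\beta r-(\beta r-1)=1$. Thus the two dynamics are related by the simple substitution ``$0\leftrightarrow 1$'' occurring exactly when a transition lands on $0$ in the greedy system (equivalently on $1$ in the quasi-greedy one), and they are otherwise identical.

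With this in hand, the key step is to set up a bijection between the reachable state sets. I would argue that $\mathsf{T}_\beta^*$ sends the closed interval $[0,1]$ into $(0,1]$ and that the only discrepancy is the pair $\{0,1\}$: a greedy walk reaches $0$ precisely when the corresponding quasi-greedy walk reaches $1$, and from $0$ the greedy transducer has transitions $0\xrightarrow{\beta\,|\,0}0$, while from $1$ the quasi-greedy transducer behaves like the greedy one on the ``full'' digit. The cleanest formulation is to show by induction on the length of an input word $w\in E^*$ that the greedy image $\mathsf{T}_w(r)$ and the quasi-greedy image $\mathsf{T}^*_w(r)$ are equal unless one equals $0$ and the other equals $1$, and that in the latter situation the two states generate the same set of further reachable states (namely $\{0\}$ for $0$ and $\{1\}$ together with whatever $1$ reaches, which by the quasi-greedy recursion is again a finite or infinite set matching the greedy one). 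Consequently the reachable state set of one transducer is finite if and only if that of the other is, since they differ by at most identifying the two states $0$ and $1$ and their (mutually corresponding) successors.

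The main obstacle I expect is handling the boundary states $0$ and $1$ carefully, since this is exactly where the greedy and quasi-greedy conventions were deliberately engineered to diverge (the problem statement even flags the unusual choice of defining $\mathsf{T}_\beta$ on $1$). I would therefore spend most of the rigor verifying that the correspondence $0\mapsto 1$ does not create or destroy finiteness: reaching $0$ in the greedy system yields only the absorbing self-loop, whereas reaching $1$ in the quasi-greedy system may spawn a genuine continuation, but that continuation is itself a quasi-greedy orbit already counted, so no new unbounded branching appears on one side without the other. Once the boundary bookkeeping is settled, the finite-versus-infinite dichotomy transfers directly through the (at most one-point) identification, completing the proof.
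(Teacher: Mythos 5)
Your reduction of the problem to comparing the two reachable orbits, and your identification of exactly where $\mathsf{T}_\beta$ and $\mathsf{T}_\beta^*$ disagree (namely at states $s\neq 0$ with $\beta s\in\mathbb{Z}_{>0}$, where the greedy map lands on $0$ and the quasi-greedy map on $1$), coincide with the opening observation of the paper's proof. The gap is in the step you yourself flag as the main obstacle: you assert that after the two walks split into $0$ and $1$ respectively, ``the two states generate the same set of further reachable states,'' the continuation from $1$ being ``already counted.'' This is not justified, and the asymmetry is real. In $\mathcal{T}_E$ the state $0$ is absorbing, so it contributes exactly one further state; in $\mathcal{T}_E^*$ the state $1$ is not absorbing, and what is reachable from it is the whole state set of $\mathcal{T}^*_{E,1}$, which is not ``already counted'' on the greedy side unless $1$ itself happens to be greedily reachable from $r$ --- and nothing forces that. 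Concretely, take $E=\{\beta\}$ with $\beta$ a non-Parry number (the paper's own example $\beta=4\varphi+1$ will do) and $r=1/\beta$. Greedily, $\mathsf{T}_\beta(1/\beta)=1-\lfloor 1\rfloor=0$ and the reachable set is $\{1/\beta,0\}$, which is finite; quasi-greedily, $\mathsf{T}_\beta^*(1/\beta)=1-\lceil 0\rceil=1$, after which the infinite aperiodic orbit of $1$ under $\mathsf{T}_\beta^*$ unfolds. So the two reachable sets do not differ by a one-point identification, and the bijection your argument rests on cannot exist.

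The paper's proof takes a different route: it does not match reachable sets state by state, but argues that if one transducer has infinitely many reachable states then there is a path visiting infinitely many distinct states while avoiding the exceptional state ($0$ for $\mathcal{T}_E$, $1$ for $\mathcal{T}_E^*$); such a path uses only transitions on which the two machines agree and therefore transfers to the other machine. In the direction ``$\mathcal{T}_{E,r}$ infinite $\Rightarrow$ $\mathcal{T}^*_{E,r}$ infinite'' this is clean, because $0$ is absorbing in $\mathcal{T}_E$ and an infinite path through pairwise distinct states necessarily avoids it. In the converse direction the corresponding avoidance claim for the state $1$ is precisely the delicate point that the example above puts under pressure, since there the only route to infinitely many quasi-greedy states passes through $1$. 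So if you rewrite your argument, do not treat the two directions as mirror images of one another: the absorbing/non-absorbing distinction between $0$ and $1$ is where all the content of the statement lives.
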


\begin{proof}
    First, observe that if the two transitions starting from the state $r$ and corresponding to a given input $\beta$ in $\mathcal{T}_E$ and $\mathcal{T}^*_E$ do not coincide, then it means that $\beta r$ is a positive integer, $\mathsf{T}_{\beta}(r)=0$ and $\mathsf{T}^*_{\beta}(r)=1$. 
   Now, if infinitely many states are accessible from $r$ in  $\mathcal{T}_E$ (resp.\ $\mathcal{T}^*_E$) then there must exist a path in $\mathcal{T}_E$ (resp.\ $\mathcal{T}^*_E$) visiting infinitely many distinct states but not visiting the state $0$ (resp.\ the state $1$). Thus, these paths must be present in both $\mathcal{T}_E$ and $\mathcal{T}^*_E$. Hence the conclusion.
\end{proof}

Our aim in this section is to provide a family of Cantor real bases for which the transducers $\mathcal{T}_{E,r}$ and $\mathcal{T}^*_{E,r}$ are finite. In view of the previous result, we will present the proofs of our subsequent results using only the greedy transducer $\mathcal{T}_{E,r}$. 

The following norm is also called (discrete) supremum norm, Chebyshev norm or max norm when the supremum is in fact a maximum. 

\begin{definition}[Max norm]
\label{def:norm}
Let $d\ge 1$ and let $z=(z_1,\ldots,z_d)$ be a $d$-tuple of pairwise distinct complex numbers.
On the finite dimensional vector space $\mathbb{R}_{<d}[X]$ of polynomials of degree less than~$d$, we define a norm by 
\[\lvert \lvert P\rvert \rvert _{z}=\max_{1\le i\le d}|P(z_i)|.\]
Verifying that it is a norm is straightforward. Note that positive definiteness follows from the fundamental theorem of algebra.
\end{definition}

For an algebraic number $\delta$ of degree $d$, we let $\mathcal{M}_\delta$ denote its minimal polynomial. The field extension $\mathbb{Q}(\delta)$ is isomorphic to $\mathbb{Q}[X]/\langle \mathcal{M}_\delta\rangle$ which can be identified with the set $\mathbb{Q}_{<d}[\delta]$ of polynomials in $\delta$ of degree less than $d$ where the operations are performed modulo $\mathcal{M}_\delta$. If moreover $\delta$ is an algebraic integer, then there is also a ring isomorphism between $\mathbb{Z}[\delta]$ and $\mathbb{Z}[X]/\langle \mathcal{M}_\delta\rangle$ which can be identified with $\mathbb{Z}_{<d}[\delta]$. Indeed, in this case, the minimal polynomial $\mathcal{M}_\delta$ is a monic polynomial with integer coefficients. By Euclidean division, any polynomial $P\in\mathbb{Z}[X]$ can be uniquely written as $P=Q\cdot \mathcal{M}_\delta+R$ for some $R\in \mathbb{Z}_{<d}[\delta]$. If $\delta=\delta_1,\delta_2,\ldots,\delta_d$ are the Galois conjugates of $\delta$, i.e., the roots of the minimal polynomial $\mathcal{M}_\delta$, then, for all $i\in\{1,\ldots,d\}$, evaluation at $\delta_i$ gives
$P(\delta_i)=R(\delta_i)$. This explains one of the assumption of the forthcoming \cref{thm:main_transducer}.

Moreover $\mathbb{Z}_{<d}[X]$ is a subset of the vector space $\mathbb{R}_{<d}[X]$. So it can be equipped with a norm as in~\cref{def:norm} for any choice of $d$ complex numbers. Nevertheless, because of the equality $P(\delta_i)=R(\delta_i)$, we will consider the norm associated with the particular $d$-tuple $(\delta_1,\ldots,\delta_d)$.

\begin{example}
    Let $\delta=\delta_1$ be the golden mean and $\delta_2=(1-\sqrt{5})/2$ be its conjugate. In~\cref{fig:norm}, the point of coordinate $(a,b)$ represents the polynomial $aX+b$ and the color of this point is given by the max norm of the polynomial corresponding to the pair $z=(\delta_1,\delta_2)$. 

    \begin{figure}[ht]
        \centering
        \includegraphics[width=0.4\linewidth]{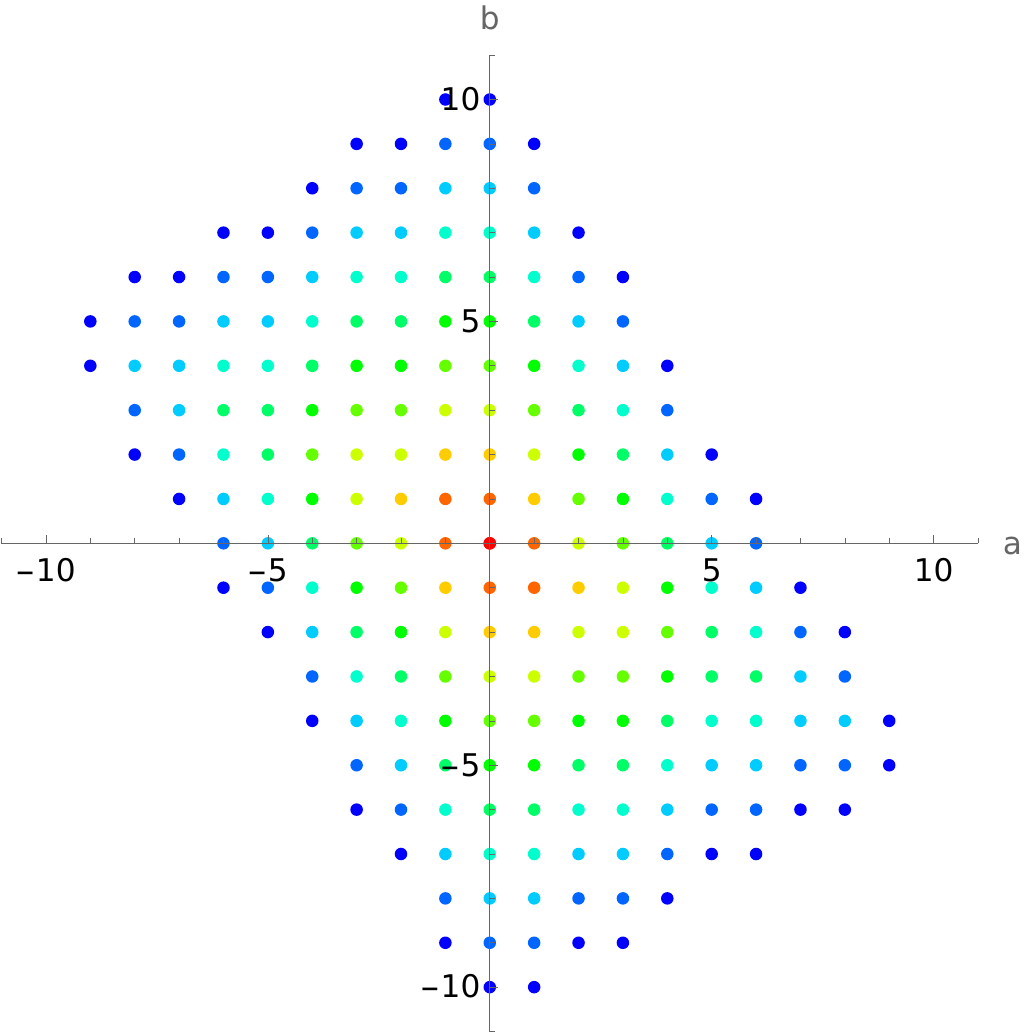}
        \caption{Max norm of $aX+b$ for the golden mean.}
        \label{fig:norm}
    \end{figure}

    We have depicted polynomials from norm $0$ (in red) to norm $10$ (in blue). The number of polynomials in $\mathbb{Z}_{<2}[X]$ of max norm $i$ for $i$ ranging from $0$ to $10$ is $1, 6, 8, 14, 16, 18, 24, 26, 32, 34, 38$. Estimate on the number of polynomials in $\mathbb{Z}[X]$ of degree less than~$d$ will provide an upper bound on the number of states of the transducers that we shall encounter. Minkowski’s theorem on lattice point counting can be used, for instance, to obtain such an estimate~\cite{Matousek}.
\end{example}

\begin{lemma}\label{lem:minpolydelta}
    Let $\delta$ be an algebraic number of degree $d$ with conjugates $\delta_1,\delta_2,\ldots,\delta_d$ and
    let $G\in \mathbb{Q}[X]$. The number $G(\delta)$ is algebraic of degree at most~$d$ and its conjugates belong to $\{G(\delta_1),\ldots,G(\delta_d)\}$. In particular, if $G(\delta)$ has degree~$d$, its conjugates are exactly $G(\delta_1),\ldots,G(\delta_d)$ with no repetition.
\end{lemma}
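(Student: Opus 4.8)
The plan is to run the standard resolvent-polynomial argument. First I would dispose of the degree bound: since $G(\delta)\in\mathbb{Q}(\delta)$, we have the tower $\mathbb{Q}\subseteq\mathbb{Q}(G(\delta))\subseteq\mathbb{Q}(\delta)$, so by multiplicativity of degrees $[\mathbb{Q}(G(\delta)):\mathbb{Q}]$ divides $[\mathbb{Q}(\delta):\mathbb{Q}]=d$; in particular $G(\delta)$ is algebraic of degree at most $d$. The real content is locating the conjugates, and for this I would introduce the candidate polynomial
\[
    P(X)=\prod_{i=1}^{d}\bigl(X-G(\delta_i)\bigr),
\]
which is monic of degree $d$ and has $G(\delta)=G(\delta_1)$ as one of its roots.

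The key step is to show that $P\in\mathbb{Q}[X]$. Each coefficient of $P$ is, up to sign, an elementary symmetric function of $G(\delta_1),\ldots,G(\delta_d)$, hence a symmetric polynomial (with rational coefficients, since $G\in\mathbb{Q}[X]$) in $\delta_1,\ldots,\delta_d$. By the fundamental theorem of symmetric polynomials, it is therefore a polynomial with rational coefficients in the elementary symmetric functions of $\delta_1,\ldots,\delta_d$. Those elementary symmetric functions are, again up to sign, the coefficients of the minimal polynomial $\mathcal{M}_\delta$, which are rational because $\delta$ is algebraic over $\mathbb{Q}$. Hence every coefficient of $P$ lies in $\mathbb{Q}$, as claimed. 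This symmetric-function bookkeeping is the only genuinely substantive step, and I expect it to be the main (though entirely routine) obstacle.

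Once $P\in\mathbb{Q}[X]$ is established, the minimal polynomial $\mathcal{M}_{G(\delta)}$ of $G(\delta)$ divides $P$ in $\mathbb{Q}[X]$, so the conjugates of $G(\delta)$, being the roots of $\mathcal{M}_{G(\delta)}$, form a subset of the roots of $P$, namely $\{G(\delta_1),\ldots,G(\delta_d)\}$. This proves the first assertion. For the ``in particular'' clause, suppose $G(\delta)$ has degree exactly $d$. Then $\mathcal{M}_{G(\delta)}$ is monic of degree $d$ and divides the monic degree-$d$ polynomial $P$, forcing $\mathcal{M}_{G(\delta)}=P$. Thus $P$ is irreducible, and since $\mathbb{Q}$ has characteristic $0$ every irreducible polynomial is separable, so the $d$ roots $G(\delta_1),\ldots,G(\delta_d)$ of $P$ are pairwise distinct and are exactly the conjugates of $G(\delta)$, with no repetition.
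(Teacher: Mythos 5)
Your proof is correct, but it takes a genuinely different route from the paper's. The paper argues via Galois theory: it takes the splitting field $M$ of $\mathcal{M}_\delta$, notes that any conjugate $\gamma$ of $G(\delta)$ is the image of $G(\delta)$ under some $\mathbb{Q}$-automorphism $\psi$ of $M$, and uses $\psi(G(\delta))=G(\psi(\delta))$ with $\psi(\delta)$ a conjugate of $\delta$; the ``no repetition'' clause then follows by counting, since $G(\delta)$ of degree $d$ has $d$ distinct conjugates all lying in a set of at most $d$ elements. You instead build the resolvent $P(X)=\prod_i(X-G(\delta_i))$, show $P\in\mathbb{Q}[X]$ by the fundamental theorem of symmetric polynomials, and conclude by divisibility of minimal polynomials, with separability in characteristic $0$ giving distinctness. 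Your route is more elementary (no appeal to normality of splitting fields or extension of embeddings) at the cost of the symmetric-function bookkeeping, and it has the incidental virtue of matching the technique the paper itself deploys later in the proof of its Lemma on Pisot numbers, where exactly this resolvent construction is used to show that $\prod_i(X-\alpha_iP(\alpha_i))$ has rational coefficients. Your separate derivation of the degree bound from the tower $\mathbb{Q}\subseteq\mathbb{Q}(G(\delta))\subseteq\mathbb{Q}(\delta)$ is also fine (and in fact gives the slightly stronger statement that the degree divides $d$), whereas the paper obtains the bound only implicitly from the containment of conjugates.
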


\begin{proof}
Let $M$ be the splitting field of the minimal polynomial of $\delta$.
For all conjugates $\gamma$ of $G(\delta)$, there is a $\mathbb{Q}$-automorphism $\psi$ of $M$ that sends $G(\delta)$ to $\gamma$. Since the polynomial $G$ has rational coefficients, one has $\gamma=\psi(G(\delta))=G(\psi(\delta))$ and $\psi(\delta)$ must be a conjugate of $\delta$. This shows that $\gamma$ has the expected form. 

For an arbitrary polynomial, merge may occur: there could exist two distinct conjugates $\delta_i$ and $\delta_j$ such $G(\psi(\delta_i))=G(\psi(\delta_j))$. However, if $G(\delta)$ has degree~$d$, it has $d$ pairwise conjugates and no merge occur.
\end{proof}

Even though the transducer $\mathcal{T}_E$ is infinite, under suitable assumptions, the next result shows how to extract a finite transducer which produces $d_\B(r)$ when fed with any Cantor real base $\B\in E^\mathbb{N}$.

\begin{theorem}\label{thm:main_transducer}
  Let $\delta$ be an algebraic integer of degree $d$
  and let $E$ be a finite alphabet of Pisot numbers of degree $d$ belonging to $\mathbb{Z}[\delta]$. For all $r\in\mathbb{Q}(\delta)\cap [0,1]$, the transducers $\mathcal{T}_{E,r}$ and $\mathcal{T}^*_{E,r}$ are finite.
\end{theorem}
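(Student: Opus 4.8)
The plan is to reduce to the greedy transducer and then track states through a fixed-denominator representation combined with a contraction estimate on the Galois conjugates supplied by the Pisot hypothesis.

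By \cref{prop:TE-TE*-simultanement-finis} it suffices to prove that $\mathcal{T}_{E,r}$ is finite. Write $\delta=\delta_1,\ldots,\delta_d$ for the conjugates of $\delta$ and fix the max norm $\lVert\cdot\rVert_z$ associated with $z=(\delta_1,\ldots,\delta_d)$ as in \cref{def:norm}. Since $r\in\mathbb{Q}(\delta)$, I can write $r=P(\delta)/m$ with $P\in\mathbb{Z}_{<d}[X]$ and a common denominator $m\in\mathbb{Z}_{\ge 1}$. The first step is to show that every state reachable from $r$ admits a representation $S(\delta)/m$ with the \emph{same} denominator $m$ and some $S\in\mathbb{Z}_{<d}[X]$. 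Fix $\beta\in E$ and write $\beta=B(\delta)$ with $B\in\mathbb{Z}_{<d}[X]$, which is legitimate because $\beta\in\mathbb{Z}[\delta]$ and $\delta$ is an algebraic integer, so $\mathbb{Z}[\delta]=\mathbb{Z}_{<d}[\delta]$. For a state $x=S(\delta)/m$ one has $\beta x=B(\delta)S(\delta)/m$; reducing the product $BS$ modulo the monic integer polynomial $\mathcal{M}_\delta$ yields some $Q\in\mathbb{Z}_{<d}[X]$ with $\beta x=Q(\delta)/m$. As $\lfloor\beta x\rfloor$ is a rational integer $k$, we get $\mathsf{T}_\beta(x)=(Q(\delta)-km)/m$, and $Q-km\in\mathbb{Z}_{<d}[X]$ since subtracting $km$ only alters the constant coefficient. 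Thus the denominator $m$ is preserved and all reachable states lie in $\{S(\delta)/m:S\in\mathbb{Z}_{<d}[X]\}$.

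The core of the argument is then to bound $\lVert S\rVert_z$ uniformly over all reachable states. Embedding each state $x=S(\delta)/m$ into its conjugate vector $(x^{(1)},\ldots,x^{(d)})=(S(\delta_1)/m,\ldots,S(\delta_d)/m)$, the transition $x\mapsto\mathsf{T}_\beta(x)=\beta x-k$ acts coordinatewise as $x^{(i)}\mapsto B(\delta_i)\,x^{(i)}-k$, because $k$ is a rational integer fixed by every embedding and $Q(\delta_i)=B(\delta_i)S(\delta_i)$. For $i=1$ we have $x^{(1)}=x\in[0,1]$ by construction, so $|S(\delta_1)|\le m$ throughout. For $i\ge 2$ I invoke \cref{lem:minpolydelta}: as $\beta$ has degree exactly $d$, its conjugates are precisely $B(\delta_1),\ldots,B(\delta_d)$ with no repetition, so the Pisot hypothesis forces $|B(\delta_i)|<1$ for every $i\ge 2$. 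Setting $\rho=\max_{\beta\in E}\max_{2\le i\le d}|B_\beta(\delta_i)|<1$ (where $B_\beta$ represents $\beta$) and $C=\max E$, and noting that $|k|=|\lfloor\beta x\rfloor|\le C$ whenever $x\in[0,1]$, each step gives $|x^{(i)}|\mapsto|B(\delta_i)x^{(i)}-k|\le\rho\,|x^{(i)}|+C$. Hence the region $\{|x^{(i)}|\le M\}$ is invariant for any $M\ge C/(1-\rho)$; choosing $M=\max\bigl(C/(1-\rho),\ \max_{2\le i\le d}|r^{(i)}|\bigr)$ so as to also contain the initial state, we obtain $|S(\delta_i)|\le mM$ for all reachable states and all $i\ge 2$.

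Combining the two estimates yields $\lVert S\rVert_z=\max_{1\le i\le d}|S(\delta_i)|\le m\max(1,M)$, a bound independent of the chosen reachable state. Since $S\mapsto(S(\delta_1),\ldots,S(\delta_d))$ realises $\mathbb{Z}_{<d}[X]\cong\mathbb{Z}^d$ as a lattice in $\mathbb{R}_{<d}[X]$ and $\lVert\cdot\rVert_z$ is a genuine norm there (positive definiteness as in \cref{def:norm}), only finitely many $S\in\mathbb{Z}_{<d}[X]$ meet this bound. Consequently only finitely many states are reachable from $r$, so $\mathcal{T}_{E,r}$ is finite, and $\mathcal{T}^*_{E,r}$ is finite by \cref{prop:TE-TE*-simultanement-finis}. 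I expect the main obstacle to be the second paragraph: the denominator bookkeeping is routine, but making the contraction on the conjugates $i\ge 2$ precise — in particular justifying $|B(\delta_i)|<1$ through the degree-$d$ Pisot hypothesis together with \cref{lem:minpolydelta}, and extracting a uniform absorbing bound $M$ — is where the real content lies.
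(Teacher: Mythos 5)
Your proof is correct and follows essentially the same route as the paper's: representing reachable states as integer polynomials in $\delta$ over a fixed denominator, then bounding the max norm over the conjugates using \cref{lem:minpolydelta} and the Pisot condition to get $|B(\delta_i)|<1$ for $i\ge 2$, and concluding by finiteness of bounded-norm points in the lattice $\mathbb{Z}_{<d}[\delta]$. The only cosmetic difference is that you phrase the conjugate estimate as a one-step contraction into an absorbing region, whereas the paper writes the closed-form expression for the state after $\ell$ steps and sums the resulting geometric series; the two bounds are the same.
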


\begin{proof}
First note that, thanks to our assumption that $E\subset \mathbb{Z}[\delta]$, starting from a state in $\mathbb{Q}(\delta)\cap[0,1]$ in the transducer $\mathcal{T}_E$, one can only visit states belonging to the same set. 

Let $r\in\mathbb{Q}(\delta)\cap [0,1]$ and let $s$ be a state that can be reached from $r$ with a path of length~$\ell$ labeled by $\beta_0\cdots \beta_{\ell-1}$ with $\beta_j$ in $E$ for all $j$. For the sake of notation, we let $\mathsf{T}_j$ be the $\beta_j$-transformation. 
Hence 
\[
    s=\mathsf{T}_{\ell-1}  \cdots \mathsf{T}_0(r),
\]
which can be rewritten as 
\begin{align} \label{eq:Maininduction}
    s= \beta_{\ell-1} \cdots \beta_0\, r - \sum_{m=0}^{\ell-1}  \beta_{\ell-1} \cdots \beta_{m+1}
\lfloor \beta_m \mathsf{T}_{m-1}\cdots \mathsf{T}_0(r)\rfloor.
\end{align} 

Let us prove \eqref{eq:Maininduction} by induction on $\ell$. The identity is trivially verified for $\ell=0$. Let $\ell\geq 0$ such that \eqref{eq:Maininduction} is satisfied for $s=\mathsf{T}_{\ell-1}  \cdots \mathsf{T}_0(r)$ and let $s'=\mathsf{T}_{\ell} \cdots \mathsf{T}_0(r)$. Then
\begin{align*}
    s'&=\beta_{\ell}\mathsf{T}_{\ell-1}  \cdots \mathsf{T}_0(r) - \lfloor \beta_\ell \mathsf{T}_{\ell-1}  \cdots \mathsf{T}_0(r) \rfloor, \\
    &=\beta_\ell\beta_{\ell-1} \cdots \beta_0 r - \sum_{m=0}^{\ell-1}\beta_\ell \cdots \beta_{m+1}
    \lfloor \beta_m \mathsf{T}_{m-1}\cdots \mathsf{T}_0(r)\rfloor - \lfloor \beta_\ell \mathsf{T}_{\ell-1}  \cdots \mathsf{T}_0(r) \rfloor,\\
    & =\beta_\ell \cdots \beta_0 r - \sum_{m=0}^{\ell}\beta_\ell \cdots \beta_{m+1}
    \lfloor \beta_m \mathsf{T}_{m-1}\cdots \mathsf{T}_0(r)\rfloor,
\end{align*} 
where the second equality follows from the recurrence hypothesis.

For all $\beta\in E$, let $G_\beta\in\mathbb{Z}_{<d}[X]$ be such that $\beta=G_\beta(\delta)$, and let $R\in\mathbb{Q}_{<d}[X]$ be such that $r=R(\delta)$. 
Let us write 
\[
    R=\sum_{0\le i < d}r_iX^{i}
    \text{ with }r_i=\frac{p_i}{q_i} \in \mathbb{Q}
\]
and define $q=\text{lcm}\{q_0,\ldots,q_{d-1}\}$. For simplicity, we simply write $G_j$ for the polynomial $G_{\beta_j}$, where $\beta_0\cdots\beta_{\ell-1}$ is the label of the path that leads us from $r$ to $s$. Now, let $S\in\mathbb{Q}[X]$ be the polynomial 
\begin{equation}
  \label{eq:S}
  S=G_{\ell-1} \cdots G_0 R-\sum_{m=0}^{\ell-1}  c_m \, G_{\ell-1} \cdots G_{m+1}
\end{equation}
where the coefficients $c_m:=\lfloor \beta_m \mathsf{T}_{m-1}\cdots \mathsf{T}_0(r)\rfloor$ are integers bounded by $\max E$. From~\eqref{eq:Maininduction}, we infer that $s=S(\delta)$. 

Our aim is to show that there are finitely many such possible values $S(\delta)$, independently of the length $\ell$ of the path from $r$ to $s$. This will in turn imply that there are finitely many states $s$ that can be reached from $r$ in $\mathcal{T}_E$. Note that the degree of the polynomial $S$ is not bounded independently of $\ell$. Inspired by~\cite[Prop.~2.3.33]{CANT}, our strategy is to make use of a well-chosen max norm on the discrete lattice $\mathbb{Z}_{<d}[\delta]$.

By choice of $q$, the polynomial $qS$ has integer coefficients. By Euclidean division of $qS$ by the minimal polynomial~$\mathcal{M}_\delta$ of $\delta$, we get $qS=Q\cdot \mathcal{M}_\delta+S'$ with $S'\in \mathbb{Z}_{<d}[X]$.
Let $\delta_1=\delta,\delta_2, \ldots, \delta_d$ be the roots of $\mathcal{M}_\delta$. 
On the discrete lattice $\mathbb{Z}_{<d}[\delta]$ of rank~$d$, we use the max norm given in~\cref{def:norm} associated with the $d$-tuple $(\delta_1,\ldots,\delta_d)$, which we simply denote here by $\lvert\lvert\cdot\lvert\lvert$, i.e.,
\[
    \lvert \lvert P\rvert \rvert =\max_{1\le i\le d}|P(\delta_i)|.
\]
Our aim is to prove that $\lvert \lvert S'\rvert \rvert$ is bounded independently of $\ell$. Since $S(\delta)=\frac{1}{q}S'(\delta)$,
this will be enough to show that there are finitely many possible states accessible from $r$ in $\mathcal{T}_E$. Since \[
      \lvert \lvert S'\rvert \rvert =\max_{1\le i\le d}|qS(\delta_i)|,
\] in order to get a bound on $\lvert \lvert S'\rvert \rvert $, it is sufficient to get a bound on $|S(\delta_i)|$ for every $i\in\{1,\ldots,d\}$. First, we know $S(\delta_1)=S(\delta)=s\le 1$.

We now turn to the non-trivial conjugates of $\delta$. By assumption, all elements $\beta$ in $E$ are Pisot numbers of degree $d$. We know from~\cref{lem:minpolydelta} that the conjugates of every $\beta$ in $E$ are exactly given by $\beta=G_\beta(\delta_1),G_\beta(\delta_2),\ldots,G_\beta(\delta_d)$. We then get from the Pisot condition that for all $i\in\{2,\ldots,d\}$, we have $|G_\beta(\delta_i)|<1$. Thus we get
\[
    M_i:=\max_{\beta\in E} |G_\beta(\delta_i)|<1
\] 
for all $i\in\{2,\ldots,d\}$. Therefore, and by using~\eqref{eq:S}, we get that for all $i\in\{2,\ldots,d\}$, 
\begin{align*}
  | S(\delta_i)|
        &=\left| G_{\ell-1}(\delta_i) \cdots G_0(\delta_i) R(\delta_i)
        -\sum_{m=0}^{\ell-1} c_m  G_{\ell-1}(\delta_i) \cdots G_{m+1}(\delta_i)\right|\\
        &\le |R(\delta_i)|+ \max E \sum_{m=0}^{\ell-1}  M_i^{\ell-1-m}, \\
        &=|R(\delta_i)|+\max E \sum_{m=0}^{\ell-1}  M_i^m, \\ 
        &\le |R(\delta_i)|+  \frac{\max E}{1-M_i}.
\end{align*}
This bound only depends on the initial element $r$ (providing the polynomial~$R$) and the algebraic integer $\delta$ (providing the conjugates~$\delta_i$). 
\end{proof}

We recall some well-known facts on Pisot numbers. For the sake of completeness, we sketch a proof.

\begin{lemma}
\label{lem:Pisot}
    Every real algebraic number field $K$ of degree $d$ contains a Pisot number of degree $d$. Moreover, the set of all Pisot numbers of degree $d$ in $K$ is closed under multiplication.
\end{lemma}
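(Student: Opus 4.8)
The plan is to isolate a single observation that drives both assertions. Write $\sigma_1=\mathrm{id},\sigma_2,\ldots,\sigma_d$ for the $d$ embeddings of $K$ into $\mathbb{C}$, where $\sigma_1$ is the inclusion $K\hookrightarrow\mathbb{R}$. First I would record the key fact: \emph{if $\theta$ is an algebraic integer in $K$ with $\sigma_1(\theta)>1$ and $|\sigma_j(\theta)|<1$ for all $j\ge 2$, then $\theta$ is automatically a Pisot number of degree $d$.} Indeed, among the $d$ values $\sigma_1(\theta),\ldots,\sigma_d(\theta)$, each conjugate of $\theta$ occurs exactly $[K:\mathbb{Q}(\theta)]$ times, since in characteristic zero every embedding of $\mathbb{Q}(\theta)$ extends to $K$ in $[K:\mathbb{Q}(\theta)]$ ways. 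But $\theta$ itself, being the unique value of modulus at least $1$, occurs only once, namely as $\sigma_1(\theta)$. Hence $[K:\mathbb{Q}(\theta)]=1$, so $\theta$ generates $K$ and has degree $d$; its conjugates are then precisely the $\sigma_j(\theta)$, one of which exceeds $1$ while the others lie strictly inside the unit disk. This is exactly the Pisot condition.

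For the existence statement, the plan is to produce such a $\theta$ by geometry of numbers. I would embed the ring of integers $\mathcal{O}_K$ as a lattice of full rank $d$ via the Minkowski embedding, and apply Minkowski's convex body theorem to the symmetric convex region on which the $\sigma_1$-coordinate is bounded by a large $T$ while $|\sigma_j|\le\varepsilon<1$ for $j\ge 2$. Its volume is proportional to $T\,\varepsilon^{d-1}$, so for fixed $\varepsilon<1$ and $T$ large enough (depending on $\varepsilon$ and the covolume) one obtains a nonzero lattice point $\theta\in\mathcal{O}_K$ with $|\sigma_j(\theta)|\le\varepsilon<1$ for all $j\ge 2$. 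Since $\theta\ne 0$, its norm $\prod_{j=1}^d\sigma_j(\theta)$ is a nonzero rational integer, whence $\prod_j|\sigma_j(\theta)|\ge 1$; as the factors with $j\ge 2$ are all $<1$, this forces $|\sigma_1(\theta)|>1$. Replacing $\theta$ by $-\theta$ if necessary (which preserves all constraints) I may assume $\sigma_1(\theta)>1$, and the opening observation then shows $\theta$ is a Pisot number of degree $d$ lying in $K$.

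For the multiplicative closure, let $\theta$ and $\theta'$ be Pisot numbers of degree $d$ in $K$. Each generates $K$, so its conjugates are exactly its images under $\sigma_1,\ldots,\sigma_d$, with $\sigma_1(\theta)=\theta>1$, $\sigma_1(\theta')=\theta'>1$ and $|\sigma_j(\theta)|,|\sigma_j(\theta')|<1$ for $j\ge 2$. The product $\theta\theta'$ is an algebraic integer in $K$ with $\sigma_1(\theta\theta')=\theta\theta'>1$ and, for $j\ge 2$, $|\sigma_j(\theta\theta')|=|\sigma_j(\theta)|\,|\sigma_j(\theta')|<1$. Applying the observation once more yields that $\theta\theta'$ is again a Pisot number of degree $d$ in $K$, which is the claim.

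The only genuinely delicate point is guaranteeing that the numbers produced (both the one extracted by Minkowski's theorem and the product $\theta\theta'$) have degree exactly $d$ rather than generating a proper subfield; this is precisely what the multiplicity count in the opening observation settles, using that a Pisot number is its own unique conjugate of modulus at least $1$. The geometry-of-numbers step needs the usual bookkeeping to handle complex embeddings (splitting each into its real and imaginary parts in the Minkowski embedding), but this is routine.
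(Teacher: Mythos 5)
Your proof is correct, and it takes a somewhat different route from the paper's. For existence, the paper simply cites Salem (Thm.~2, via Minkowski's theorem on linear forms), whereas you actually carry out the geometry-of-numbers argument with the convex body theorem; the two are essentially the same construction, so here you are filling in a proof the paper outsources. The real divergence is in how the degree-$d$ claim is certified. The paper writes $\beta=P(\alpha)$, invokes its Lemma~\ref{lem:minpolydelta} to identify the conjugates of the product, forms the integer polynomial $G=\prod_i\bigl(X-\alpha_iP(\alpha_i)\bigr)$, and argues irreducibility via Kronecker's theorem (a proper factor would have all roots of modulus $<1$, impossible for a nonzero integer polynomial). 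You instead use the embedding-multiplicity count: each conjugate of $\theta$ appears exactly $[K:\mathbb{Q}(\theta)]$ times among $\sigma_1(\theta),\ldots,\sigma_d(\theta)$, and a Pisot-type element is its own unique conjugate of modulus $\ge 1$, so it appears once and $\mathbb{Q}(\theta)=K$. Your version is cleaner in that a single observation handles the degree question uniformly for both the Minkowski-produced element and the product $\theta\theta'$, and it avoids the slightly opaque appeal to Kronecker; the paper's version has the advantage of reusing machinery (Lemma~\ref{lem:minpolydelta} and symmetric-function arguments) already set up for the main theorem. Both arguments are sound.
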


\begin{proof}
    The first part of this result can be found in \cite[Thm.~2]{Salem} (it relies on Minkowski's theorem on linear forms). 

    Let $\alpha,\beta$ be two Pisot numbers of degree~$d$ belonging to $K$ and let us prove that $\gamma=\alpha\beta$ is also a Pisot number. We have $K=\mathbb{Q}(\alpha)=\mathbb{Q}(\beta)$, hence $\beta=P(\alpha)$ where $P\in\mathbb{Q}[X]$. Thus, if the conjugates of $\alpha$ are $\alpha_1=\alpha,\alpha_2\ldots,\alpha_d$, then by~\cref{lem:minpolydelta}, the conjugates of $\beta$ are exactly given by $P(\alpha_1)=\beta,P(\alpha_2),\ldots,P(\alpha_d)$ (since $\beta$ is of degree $d$) and the conjugates of $\gamma$ are among $\alpha_1P(\alpha_1)=\gamma,\alpha_2P(\alpha_2),\ldots,\alpha_dP(\alpha_d)$. Clearly, $\gamma>1$. Since $|\alpha_iP(\alpha_i)|<1$ for $i\ne 1$ and since $\gamma$ is an algebraic integer (the algebraic integers form a ring), we get that $\gamma$ is a Pisot number. Let us argue that $\gamma$ still has degree $d$. The polynomial 
    \[
        G:=(X-\alpha_1P(\alpha_1))\cdots(X-\alpha_dP(\alpha_d))
    \]
    has rational coefficients (since each of its coefficients can be expressed as a symmetric polynomial in $\alpha_1,\ldots,\alpha_n$ with rational coefficients, and since the polynomial $(X-\alpha_1)\cdots(X-\alpha_n)$ has rational --- and even integral --- coefficients). Since all its roots $\alpha_iP(\alpha_i)$ are algebraic integers, we infer that $G$ has in fact integer coefficients. But then, it must be irreducible by Kronecker's theorem. Therefore, $G$ is the minimal polynomial of $\gamma$ proving that $\gamma$ has degree $d$.
\end{proof}

The following statement is the special case where all bases are powers of the same Pisot number. 

\begin{corollary}
\label{cor:pisotpowers}
  Let $E$ be a finite set of positive powers of a Pisot number~$\delta$.  
  For all $r\in\mathbb{Q}[\delta]\cap [0,1]$, the transducers $\mathcal{T}_{E,r}$ and $\mathcal{T}^*_{E,r}$ are finite. 
\end{corollary}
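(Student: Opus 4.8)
The plan is to deduce Corollary~\ref{cor:pisotpowers} from Theorem~\ref{thm:main_transducer} by exhibiting an algebraic integer $\delta'$ of some degree $d'$ such that the hypotheses of the theorem are met with this $\delta'$, the alphabet $E$, and the given $r$. Concretely, write $E=\{\delta^{k_1},\dots,\delta^{k_m}\}$ where $\delta$ is a Pisot number and the $k_i\ge 1$ are positive integers. The natural candidate is to take $\delta'=\delta$ itself: since $\delta$ is Pisot, it is an algebraic integer, say of degree $d$. First I would check that every element of $E$ is a Pisot number lying in $\mathbb{Z}[\delta]$. That membership is immediate because $\delta^{k}\in\mathbb{Z}[\delta]$ for every $k\ge 1$, and each $\delta^k$ is a Pisot number: its Galois conjugates are $\delta_1^k,\dots,\delta_d^k$ (the conjugates of a power are the powers of the conjugates, by Lemma~\ref{lem:minpolydelta} applied to $G=X^k$), so $\delta^k>1$ while $|\delta_i^k|=|\delta_i|^k<1$ for $i\ge 2$.

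The only genuine subtlety is the requirement in Theorem~\ref{thm:main_transducer} that all elements of $E$ share the same degree~$d$ as $\delta$. This does \emph{not} hold automatically: a power $\delta^k$ can have degree strictly smaller than $d$ if some of the values $\delta_1^k,\dots,\delta_d^k$ coincide (for instance if $\delta$ has two conjugates differing by a root of unity). This is the step I expect to be the main obstacle, and the way to handle it is supplied by the final clause of Lemma~\ref{lem:minpolydelta}: the conjugates are exactly $\delta_1^k,\dots,\delta_d^k$ with no repetition precisely when $\delta^k$ has degree~$d$. So the clean argument is to verify that each $\delta^{k_i}$ still has degree $d$, equivalently that the map $x\mapsto x^{k_i}$ is injective on $\{\delta_1,\dots,\delta_d\}$. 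Since $\delta$ is Pisot, $|\delta_1|>1>|\delta_i|$ for $i\ge 2$, so $\delta_1^{k_i}$ is the unique conjugate of modulus exceeding $1$ and cannot collide with any other; and among the remaining conjugates one still needs $\delta_i^{k_i}\ne\delta_j^{k_i}$ for $i\ne j$. I would argue this directly, or simply invoke the multiplicativity already established in Lemma~\ref{lem:Pisot}: that lemma shows the set of Pisot numbers of degree $d$ in $K=\mathbb{Q}(\delta)$ is closed under multiplication, so starting from $\delta$ (a degree-$d$ Pisot number by hypothesis) every positive power $\delta^k$ is again a Pisot number of degree $d$.

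With that settled, the proof is essentially a citation: all the $\delta^{k_i}$ are Pisot numbers of the common degree $d$ lying in $\mathbb{Z}[\delta]$, and $r\in\mathbb{Q}[\delta]\cap[0,1]\subseteq\mathbb{Q}(\delta)\cap[0,1]$. Applying Theorem~\ref{thm:main_transducer} with the algebraic integer $\delta$ and the alphabet $E$ yields at once that $\mathcal{T}_{E,r}$ and $\mathcal{T}^*_{E,r}$ are finite, which is the desired conclusion. Thus the corollary reduces, after the degree-preservation check, to a direct application of the main theorem, with Lemma~\ref{lem:Pisot} furnishing exactly the fact that powers of a degree-$d$ Pisot number remain Pisot of degree $d$.
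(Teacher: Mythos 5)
Your proposal is correct and follows the same route as the paper: the paper's proof simply invokes Lemma~\ref{lem:Pisot} to conclude that powers of a Pisot number are Pisot numbers of the same degree, and then applies Theorem~\ref{thm:main_transducer}. Your additional discussion of why the degree could in principle drop (and why Lemma~\ref{lem:Pisot} rules this out) is a more careful spelling-out of the same argument, not a different one.
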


\begin{proof}
    From~\cref{lem:Pisot}, we know that powers of a Pisot number are Pisot numbers with the same degree. Thus, \cref{thm:main_transducer} can be applied.
\end{proof}

Automatic sequences and morphic sequences are closed under $1$-uniform transduction. For instance, see~\cite[Theorem 6.9.2]{AS03} and~\cite{Dekking1994}. The next statement is therefore straightforward.

\begin{corollary}
\label{cor:automatic}
 Let $\delta$ be an algebraic integer of degree $d$ and let $E$ be a finite alphabet of Pisot numbers of degree $d$ belonging to $\mathbb{Z}[\delta]$. If $\B\in E^\mathbb{N}$ is a $b$-automatic sequence for some integer base $b\ge 2$ (resp.\ a morphic sequence), then for all $r\in\mathbb{Q}(\delta)\cap [0,1]$, the sequences $d_\B(r)$ and $d_\B^*(r)$ are  also $b$-automatic (resp.\ morphic).
\end{corollary}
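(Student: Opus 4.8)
The plan is to combine the finiteness result of \cref{thm:main_transducer} with the closure properties of automatic and morphic sequences under letter-to-letter (that is, $1$-uniform) transduction. The key observation is that, by \cref{thm:main_transducer}, the transducers $\mathcal{T}_{E,r}$ and $\mathcal{T}^*_{E,r}$ are \emph{finite} letter-to-letter transducers, and by the preceding proposition, feeding the Cantor real base $\B$ into $\mathcal{T}_{E,r}$ (resp.\ $\mathcal{T}^*_{E,r}$) starting from the initial state $r$ produces exactly $d_\B(r)$ (resp.\ $d_\B^*(r)$). So $d_\B(r)$ and $d_\B^*(r)$ are each the image of the input sequence $\B$ under a finite-state letter-to-letter transduction.

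First I would make the reduction explicit: set $\tau$ to be the letter-to-letter transduction realized by $\mathcal{T}_{E,r}$ with initial state $r$, so that $d_\B(r)=\tau(\B)$, and similarly with $\mathcal{T}^*_{E,r}$ for $d_\B^*(r)$. Since the hypotheses of \cref{thm:main_transducer} are satisfied ($\delta$ is an algebraic integer of degree $d$, $E$ is a finite alphabet of Pisot numbers of degree $d$ in $\mathbb{Z}[\delta]$, and $r\in\mathbb{Q}(\delta)\cap[0,1]$), both transducers are finite, so $\tau$ is a genuine finite-state transduction. Then I would invoke the cited closure theorems directly: if $\B$ is $b$-automatic, then \cite[Theorem~6.9.2]{AS03} guarantees that the image of a $b$-automatic sequence under a $1$-uniform transduction (equivalently, a deterministic finite-state transducer with letter-to-letter output) is again $b$-automatic; hence $d_\B(r)=\tau(\B)$ and $d_\B^*(r)$ are $b$-automatic. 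For the morphic case, the analogous closure of the class of morphic sequences under letter-to-letter transduction is provided by \cite{Dekking1994}, yielding that $d_\B(r)$ and $d_\B^*(r)$ are morphic.

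I expect the main (and essentially only) obstacle to be bookkeeping rather than mathematical depth: I must make sure the transducer in the sense of \cref{thm:main_transducer} matches exactly the notion of transduction required by the cited closure results. Specifically, the cited theorems are usually stated for deterministic finite automata with output (equivalently $1$-uniform morphisms applied along a DFA run), while $\mathcal{T}_{E,r}$ is deterministic in the input letter (for each state $r$ and each $\beta\in E$ there is a unique outgoing transition), so the identification is immediate; but the fact that the output alphabet is a finite subset of $\mathbb{N}$ (guaranteed because the digits $\lfloor\beta r\rfloor$ are bounded by $\max E$, and only finitely many states occur) should be noted to keep the output alphabet finite. Once these technical points are checked, the conclusion follows with no computation, which is why I would keep the proof to a single short paragraph in the paper.
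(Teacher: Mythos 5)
Your proposal is correct and follows exactly the paper's route: the paper likewise derives the corollary by combining the finiteness of $\mathcal{T}_{E,r}$ and $\mathcal{T}^*_{E,r}$ from \cref{thm:main_transducer} with the closure of automatic and morphic sequences under $1$-uniform (letter-to-letter) transduction, citing \cite[Theorem~6.9.2]{AS03} and \cite{Dekking1994}. Your additional bookkeeping remarks (determinism of the transducer in the input letter, finiteness of the output alphabet) are sound and only make explicit what the paper leaves implicit.
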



\subsection{Building a few examples}

Under the assumptions of~\cref{thm:main_transducer}, both transducers $\mathcal{T}_{E,r}$ and $\mathcal{T}^*_{E,r}$ are finite: \cref{fig:algo-transducteur-glouton} and \cref{fig:algo-transducteur-quasi-glouton} terminate and produce transducers $\mathcal{T}_{E,r}$ and $\mathcal{T}^*_{E,r}$, respectively. These algorithms explore the state-space incrementally, propagating outward --- much like an oil stain spreading on a surface. Such algorithms can be easily implemented in any computer algebra system. The examples given in this paper were computed using {\tt Mathematica} because when dealing with algebraic numbers, we were able to work with exact arithmetic. Indeed, relying on numerical approximations could lead to distinguishing states that actually correspond to the same real number. An implementation of the two algorithms is available on \url{https://hdl.handle.net/2268/333750}, as well as all the computed transducers from in the paper.

\begin{algorithm}   
 \begin{algorithmic}[1] 
        \Procedure{Greedy Transducer}{E,r} 
            \State $Q\gets \emptyset$ \Comment{Set of states of the transducer}
            \State $L\gets \{r\}$ \Comment{List of states to be processed}
            \While{$L\not=\emptyset$}
            \State Let $q$ be the first element in $L$
            \State Remove $q$ from $L$ and add it to $Q$
                \For{$\beta$ in $E$}
                \If {$\mathsf{T}_{\beta}(q)\not\in Q\cup L$} add $q$ to $L$ \Comment{Detection of a new state}
                \EndIf
                \State add a transition $(q,\beta\,|\,\lfloor \beta q\rfloor,\mathsf{T}_{\beta}(q))$ \Comment{Defining transitions}
                \EndFor
            \EndWhile
        \EndProcedure
\end{algorithmic}
    \caption{Algorithm producing $\mathcal{T}_{E,r}$}
    \label{fig:algo-transducteur-glouton}
\end{algorithm}

\begin{algorithm}     
 \begin{algorithmic}[1] 
        \Procedure{Quasi-greedy Transducer}{E,r} 
            \State $Q\gets \emptyset$ \Comment{Set of states of the transducer}
            \State $L\gets \{r\}$ \Comment{List of states to be processed}
            \While{$L\not=\emptyset$}
            \State Let $q$ be the first element in $L$
            \State Remove $q$ from $L$ and add it to $Q$
                \For{$\beta$ in $E$}
                \If {$\mathsf{T}^*_{\beta}(q)\not\in Q\cup L$} add $q$ to $L$ \Comment{Detection of a new state}
                \EndIf
                \State add a transition $(q,\beta\,|\,\lceil \beta q-1\rceil,\mathsf{T}^*_{\beta}(q))$ \Comment{Defining transitions}
                \EndFor
            \EndWhile
        \EndProcedure
\end{algorithmic}
    \caption{Algorithm producing $\mathcal{T}^*_{E,r}$}
    \label{fig:algo-transducteur-quasi-glouton}
\end{algorithm}

\begin{example}
\label{exa:continue}
For a set $E$ of finitely many integer bases, the assumptions of~\cref{thm:main_transducer} are satisfied. For instance, with $E=\{2,3\}$, we have the transducers $\mathcal{T}^*_{E,1}$ and $\mathcal{T}^*_{E,\frac{1}{5}}$ depicted in~\cref{fig:exa0}. 

\begin{figure}[ht] 
\begin{center}
\begin{tikzpicture}[->,thick,main node/.style={circle,draw,minimum size=12pt,inner sep=2pt}]
\tikzstyle{every node}=[shape=circle,draw=red,minimum size=12pt,inner sep=2pt]

\node(a0)[color=red] at (-2,0) {{\color{black} 1}};
\node(a1)[color=red] at (2,0) {{\color{black} 1}};
\node(a2)[color=red] at (4,0) {{\color{black} 1/5}};

\tikzstyle{every node}=[shape=circle,fill=none,draw=black,minimum size=12pt,inner sep=2pt]

\node(b01) at (0,0) {0};

\node(b21) at (6,2) {2/5};
\node(b22) at (8,0) {4/5};
\node(b23) at (6,-2) {3/5};

\tikzstyle{every path}=[color =black, line width = 0.5 pt]
\tikzstyle{every node}=[shape=rectangle,draw=none,minimum size=10pt,inner sep=2pt]

\draw [->] (a0) to [bend left=20] node [above] {\small $2|2$}  (b01);
\draw [->] (a0) to [bend right=20] node [below] {\small $3|3$} (b01);

\draw [->] (a2) to [bend left=15] node [above left] {\small $2|0$}  (b21);
\draw [->] (a2) to [bend left=15] node [above right] {\small $3|0$} (b23);

\draw [->] (b21) to [bend left=15] node [above right] {\small $2|0$}  (b22);
\draw [->] (b21) to [bend left=15] node [below right] {\small $3|1$} (a2);

\draw [->] (b22) to [bend left=15] node [below right] {\small $2|1$}  (b23);
\draw [->] (b22) to [bend left=15] node [below left] {\small $3|2$} (b21);

\draw [->] (b23) to [bend left=15] node [below left] {\small $2|1$}  (a2);
\draw [->] (b23) to [bend left=15] node [above left] {\small $3|1$} (b22);

\tikzset{every loop/.style={min distance=10mm,in=60,out=120,looseness=10}}

\draw [->] (b01) to [loop right] node [above] {\small $2|0$} ();
\draw [->] (a1) to [loop right] node [above] {\small $2|1$} ();
\tikzset{every loop/.style={min distance=10mm,in=240,out=300,looseness=10}}

\draw [->] (b01) to [loop left] node [below] {\small $3|0$} ();
\draw [->] (a1) to [loop left] node [below] {\small $3|2$} ();

\end{tikzpicture}
\end{center}
\caption{The transducers $\mathcal{T}_{E,1}$ (left), $\mathcal{T}^*_{E,1}$ (middle) and $\mathcal{T}_{E,\frac{1}{5}}=\mathcal{T}^*_{E,\frac{1}{5}}$ (right) for $E=\{2,3\}$.}
 \label{fig:exa0}
\end{figure}
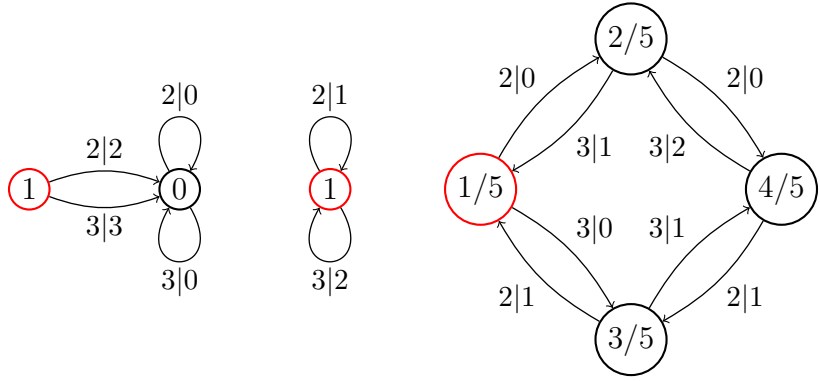

  If we consider the rational number $r=932/3885$, we get a transducer $\mathcal{T}_{E,r}$ with $180$ states (in this case, the greedy and quasi-greedy transducers coincide). This number of states is much larger than in~\cref{exa:first_transducer} but here we allow any sequence of $2$ and $3$ as Cantor base while in~\cref{sec:Cantorbaseintegers}, the only input Cantor bases were made of blocks of $23$ and $32$. In \cref{fig:180}, we have represented the accessible part of the $180$-state transducer that can be reached from the initial state but restricting the input to sequences made of $23$ and $32$. 

\begin{figure}[ht] 
\begin{center}
\begin{tikzpicture}[->,thick,main node/.style={circle,draw,minimum size=12pt,inner sep=2pt}]
\tikzstyle{every node}=[shape=circle,draw=red,minimum size=12pt,inner sep=2pt]

\node(a)[color=red] at (0,0) {};

\tikzstyle{every node}=[shape=circle,fill=none,draw=black,minimum size=12pt,inner sep=2pt]

\node(b1) at (3,0) {};
\node(b2) at (0,3) {};
\node(b3) at (3,3) {};

\node(c1) at (4.5,1.5) {};
\node(c2) at (7.5,1.5) {};
\node(c3) at (4.5,4.5) {};
\node(c4) at (7.5,4.5) {};

\node(d1) at (3,6) {};
\node(d2) at (6,6) {};
\node(d3) at (9,6) {};
\node(d4) at (9,3) {};
\node(d5) at (9,0) {};
\node(d6) at (6,0) {};

\tikzstyle{every path}=[color =black, line width = 0.5 pt]
\tikzstyle{every node}=[shape=rectangle,draw=none,minimum size=10pt,inner sep=2pt]

\draw [->] (a) to [] node [left] {\small $2|0$}  (b2);
\draw [->] (a) to [] node [below] {\small $3|0$}  (b1);

\draw [->] (b2) to [] node [above] {\small $3|1$}  (b3);
\draw [->] (b1) to [] node [left] {\small $2|1$}  (b3);

\draw [->] (b3) to [] node [below right] {\small $2|0$}  (c3);
\draw [->] (b3) to [] node [left] {\small $3|1$}  (d1);

\draw [->] (c3) to [] node [below right] {\small $3|2$}  (d2);
\draw [->] (d1) to [] node [above] {\small $2|0$}  (d2);

\draw [->] (d2) to [] node [above] {\small $2|1$}  (d3);
\draw [->] (d2) to [] node [below left] {\small $3|1$}  (c4);

\draw [->] (d3) to [] node [right] {\small $3|0$}  (d4);
\draw [->] (c4) to [] node [below left] {\small $2|1$}  (d4);

\draw [->] (d4) to [] node [above left] {\small $2|1$}  (c2);
\draw [->] (d4) to [] node [right] {\small $3|2$}  (d5);

\draw [->] (c2) to [] node [above left] {\small $3|1$}  (d6);
\draw [->] (d5) to [] node [below] {\small $2|0$}  (d6);

\draw [->] (d6) to [] node [above right] {\small $2|1$}  (c1);
\draw [->] (d6) to [] node [below] {\small $3|2$}  (b1);

\draw [->] (c1) to [] node [above right] {\small $3|2$}  (b3);

\end{tikzpicture}
\end{center}
\caption{Part of the transducer $\mathcal{T}_{E,932/3885}$ with inputs in $\{23,32\}^*$.}
\label{fig:180}
\end{figure}
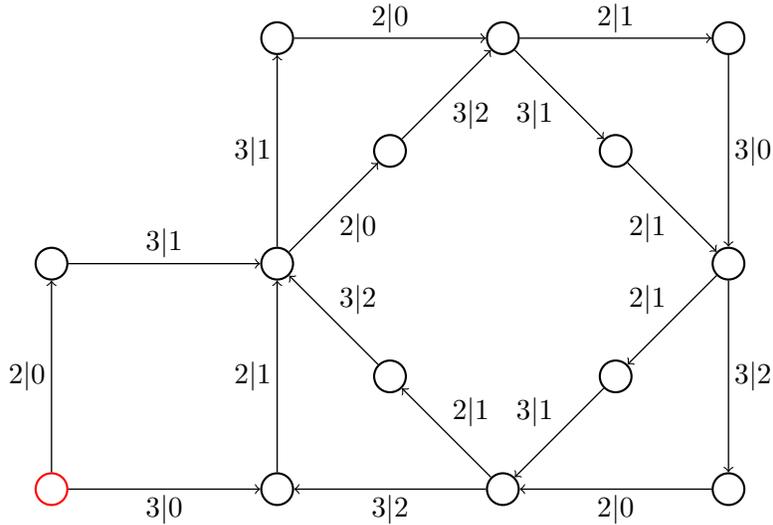

This subtransducer almost coincides with the construction discussed at the end of~\cref{exa:first_transducer}. Indeed, in the latter example, each edge label $a|h_a(j)$  in \cref{fig:transducer-fig} gave rise to two edges going through a new intermediate state. This construction thus results in a transducer with $15$ states (the $5$ original states and the $2\cdot 5$ new intermediate states). However, when using the algorithm computing $\mathcal{T}_{E,r}$, two states in the construction correspond to the same remainder and are therefore merged here resulting in $14$ states.
\end{example}

\begin{remark}\label{rem:complexity}
    Under the assumptions of~\cref{thm:main_transducer}, for a fixed $r$, the transducer $\mathcal{T}_{E,r}$ (or $\mathcal{T}^*_{E,r}$) can be used to derive a measure of complexity of Cantor bases~$\B$ as the ratio of the number of states visited in $\mathcal{T}_{E,r}$ with input $\B$ compared with the total number of states of this transducer. One can expect that a random sequence will visit the full state space while specific Cantor bases will only visit a subset of the state space. In the previous example, we could say that a Cantor base in $\{23,32\}^\omega$ has complexity (with respect to $r=932/3885$) equal to $14/180<1/12$. 
    
    Such an observation leads to another question: the set of visited states for an input~$\B$ depends on the structure of the transducer $\mathcal{T}_{E,r}$ and in particular, its connectedness (see \cref{sec:connectedness}).
\end{remark}
    
\begin{example}
  Let $\varphi=\frac{1+\sqrt{5}}{2}$ be the golden mean and let $E=\{\varphi,\varphi^3\}$. The assumptions of~\cref{thm:main_transducer} are satisfied (also see Corollary~\ref{cor:pisotpowers}). The transducers $\mathcal{T}^*_{E,1}$ and $\mathcal{T}^*_{E,\frac{1}{2}}$ are finite and depicted in~\cref{fig:exa1}. 

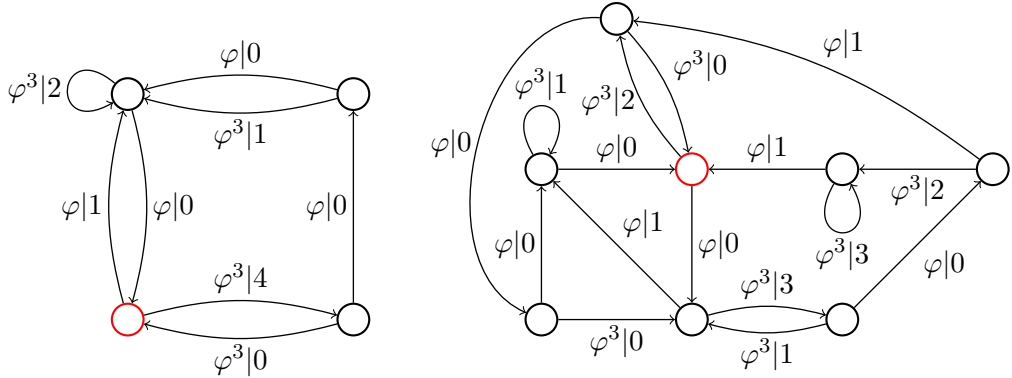
\begin{figure}[ht] 
\begin{center}
\begin{tikzpicture}[->,thick,main node/.style={circle,draw,minimum size=12pt,inner sep=2pt}]
\tikzstyle{every node}=[shape=circle,draw=red,minimum size=12pt,inner sep=2pt]

\node(a0)[color=red] at (-1.5,0) {};
\node(a1)[color=red] at (6,2) {};

\tikzstyle{every node}=[shape=circle,fill=none,draw=black,minimum size=12pt,inner sep=2pt]

\node(b01) at (-1.5,3) {};
\node(b02) at (1.5,0) {};
\node(b03) at (1.5,3) {};

\node(b11) at (4,0) {};
\node(b12) at (6,0) {};
\node(b13) at (8,0) {};
\node(b14) at (4,2) {};
\node(b15) at (8,2) {};
\node(b16) at (5,4) {};
\node(b17) at (10,2) {};

\tikzstyle{every path}=[color =black, line width = 0.5 pt]
\tikzstyle{every node}=[shape=rectangle,draw=none,minimum size=10pt,inner sep=2pt]
\tikzset{every loop/.style={min distance=10mm,in=210,out=140,looseness=10}}

\draw [->] (a0) to [bend left=15] node [left] {\small $\varphi|1$}  (b01);
\draw [->] (a0) to [bend left=15] node [above] {\small $\varphi^3|4$} (b02);

\draw [->] (b01) to [bend left=15] node [right] {\small $\varphi|0$}  (a0);
\draw [->] (b01) to [loop] node [left] {\small $\varphi^3|2$} ();

\draw [->] (b02) to [] node [left] {\small $\varphi|0$}  (b03);
\draw [->] (b02) to [bend left=15] node [below] {\small $\varphi^3|0$} (a0);

\draw [->] (b03) to [bend right=15] node [above] {\small $\varphi|0$}  (b01);
\draw [->] (b03) to [bend left=15] node [below] {\small $\varphi^3|1$} (b01);

\draw [->] (a1) to [bend left=15] node [left] {\small $\varphi^3|2$}  (b16);
\draw [->] (a1) to [] node [right] {\small $\varphi|0$} (b12);

\draw [->] (b11) to [] node [left] {\small $\varphi|0$}  (b14);
\draw [->] (b11) to [] node [below] {\small $\varphi^3|0$} (b12);

\draw [->] (b12) to [] node [above right] {\small $\varphi|1$}  (b14);
\draw [->] (b12) to [bend left=15] node [above] {\small $\varphi^3|3$} (b13);

\draw [->] (b13) to [] node [below right] {\small $\varphi|0$}  (b17);
\draw [->] (b13) to [bend left=15] node [below] {\small $\varphi^3|1$} (b12);

\tikzset{every loop/.style={min distance=10mm,in=60,out=120,looseness=10}}

\draw [->] (b14) to [] node [above] {\small $\varphi|0$}  (a1);
\draw [->] (b14) to [loop] node [above] {\small $\varphi^3|1$} ();

\tikzset{every loop/.style={min distance=10mm,in=300,out=240,looseness=10}}

\draw [->] (b15) to [] node [above] {\small $\varphi|1$}  (a1);
\draw [->] (b15) to [loop] node [below] {\small $\varphi^3|3$} ();

\draw [->] (b16) to [bend right=80] node [left] {\small $\varphi|0$} (b11);
\draw [->] (b16) to [bend left=15] node [above right] {\small $\varphi^3|0$}  (a1);

\draw [->] (b17) to [bend right=15] node [above right] {\small $\varphi|1$} (b16);
\draw [->] (b17) to [] node [below] {\small $\varphi^3|2$}  (b15);

\end{tikzpicture}
\end{center}
  \caption{The transducers $\mathcal{T}^*_{E,1}$ (left) and $\mathcal{T}^*_{E,\frac{1}{2}}$ (right) for $E=\{\varphi,\varphi^3\}$.}
  \label{fig:exa1}
\end{figure}

    As an example, if $\mathbf{T}=\varphi\varphi^3\varphi^3\varphi\varphi^3\varphi\varphi\varphi^3\cdots$ is the Thue--Morse word over $E$, then
  \[
    d_{\mathbf{T}}^*(1)=12204002\cdots
    \quad\text{and}\quad
    d_{\mathbf{T}}^*(1/2)=03111003\cdots. 
  \]
\end{example}

\begin{example}
    Let $\varphi$ be the golden mean and let $E=\{\varphi,2\}$. Observe that $E$ is included in $\mathbb{Z}[\varphi]$. Moreover $\varphi$ has algebraic degree $2$ whereas $2$ has algebraic degree $1$. Thus, all the assumptions of~\cref{thm:main_transducer} are satisfied, except the one concerning the degree of the considered Pisot numbers. With the notation of the proof of the theorem, the polynomial $G_2$ is the constant~$2$, and thus evaluations of $G_2$ at $\varphi$ and its conjugate are both equal to~$2$ and $M_2=2$. 
    
    And now consider the Cantor real base $\B=\varphi2^{\omega}$. In that case, the set of states of the transducer that can be reached from $1$ is infinite. Indeed, after the first step using base $\varphi$, one has to represent $1/\varphi$ using only base $2$. But since this number is irrational, its $2$-expansion is aperiodic and we never encounter the same remainder twice in the greedy algorithm. This simple example shows that the assumption about Pisot numbers with the same degree is necessary. 
\end{example}


\section{Periodic expansions}

Having the transducer $\mathcal{T}_{E,r}$ at our disposal, we are interested in determining the Cantor bases~$\B$ for which $d_\B^*(r)$ is ultimately periodic. For instance, considering~\cref{exa:continue} and \cref{fig:180}, it is not hard to see that the only Cantor bases made of blocks $23$ and $32$ giving an ultimately periodic $\B$-expansion of $932/3885$ are the ultimately alternate ones. This section addresses that general question.

\subsection{Relationship with an extension of a theorem of Schmidt to alternate bases}

For an alternate base $\B=(\beta_0,\ldots,\beta_{p-1})^\omega$, if all rational numbers in $[0,1)$ have periodic $\B$-expansions, then $\beta_0,\ldots,\beta_{p-1}$ all belong to the extension field $\mathbb{Q}(\beta_0\cdots\beta_{p-1})$ and moreover, the product $\beta_0\cdots\beta_{p-1}$ must be either a Pisot or Salem number~\cite{CCK,MP}. When the product $\beta_0\cdots\beta_{p-1}$ is a Pisot number and the bases $\beta_0,\ldots,\beta_{p-1}$ all belong to $\mathbb{Q}(\beta_0\cdots\beta_{p-1})$, the set of points in $[0,1)$ having an ultimately periodic $\B$-expansion is precisely $\mathbb{Q}(\beta_0\cdots\beta_{p-1})\cap [0,1)$. 

A family of length-$2$ alternate bases with the property that all rational numbers in $[0,1)$ have purely periodic $\B$-expansions is given in~\cite{MP}. In our setting, by using our graph-theoretical approach, we can easily show that for any ultimately alternate base $\B$ over a set of bases $E$ satisfying the assumptions of~\cref{thm:main_transducer}, all numbers in $\mathbb{Q}(\delta)\cap [0,1)$ have periodic $\B$-expansions. The novelty here lies in the used technique, since this result is in fact a direct consequence of the above-mentioned extension of Schmidt's theorem obtained in \cite{CCK}. Indeed, given an alternate base $\B=(\beta_0,\ldots,\beta_{p-1})^\omega$, Lemma~\ref{lem:Pisot} implies that the hypotheses of Theorem~\ref{thm:main_transducer} are stronger than asking that the product $\beta_0\cdots\beta_{p-1}$ is a Pisot number and $\beta_0,\ldots,\beta_{p-1}\in\mathbb{Q}(\beta_0\cdots\beta_{p-1})$.

\begin{corollary} 
\label{cor:AlternateBase-implies-Periodicity}
 Let $\delta$ be an algebraic integer of degree $d$ and let $E$ be a finite alphabet of Pisot numbers of degree $d$ belonging to $\mathbb{Z}[\delta]$. 
 If $\B$ is an ultimately alternate base whose elements belong to $E$, then all $r\in\mathbb{Q}(\delta)\cap [0,1]$ have ultimately periodic $\B$-expansion $d_\B(r)$ and quasi-greedy $\B$-expansion $d_\B^*(r)$. 
\end{corollary}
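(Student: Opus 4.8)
The plan is to read off periodicity directly from the finite transducer. By \cref{thm:main_transducer}, the hypotheses guarantee that both $\mathcal{T}_{E,r}$ and $\mathcal{T}^*_{E,r}$ are finite for every $r\in\mathbb{Q}(\delta)\cap[0,1]$. Moreover, by the proposition identifying the output of $\mathcal{T}_E$ (resp.\ $\mathcal{T}^*_E$) with the $\B$-expansion (resp.\ quasi-greedy $\B$-expansion), the word $d_\B(r)$ (resp.\ $d_\B^*(r)$) is exactly the infinite word emitted when the base $\B$ is fed as input to $\mathcal{T}_{E,r}$ (resp.\ $\mathcal{T}^*_{E,r}$) from the initial state $r$. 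Since these machines are deterministic and letter-to-letter, the whole problem reduces to the classical fact that a finite deterministic transducer turns an ultimately periodic input into an ultimately periodic output.

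To make this precise, since $\B$ is ultimately alternate I would write $\B=u\,v^\omega$ with $u,v\in E^*$ and $v=\gamma_0\cdots\gamma_{p-1}$ the repeated block. Reading $u$ from $r$ leads deterministically to a single state $s$, after emitting a finite prefix. Let $\Phi=\mathsf{T}_{\gamma_{p-1}}\cdots\mathsf{T}_{\gamma_0}$ be the state transformation describing one pass through $v$ in $\mathcal{T}_{E,r}$ (and $\Phi^*$ the analogue built from the maps $\mathsf{T}^*$ in $\mathcal{T}^*_{E,r}$), and set $s_0=s$ and $s_{k+1}=\Phi(s_k)$. As $\mathcal{T}_{E,r}$ has finitely many states, the pigeonhole principle yields indices $i<j$ with $s_i=s_j$, so the sequence $(s_k)_{k\ge 0}$ of block-entry states is ultimately periodic.

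Finally, because the transducer is deterministic, the finite output word emitted while reading the $k$-th copy of $v$ is a function of the entry state $s_k$ and of the fixed block $v$ alone. Hence the sequence of these output blocks is ultimately periodic, and so is their concatenation; prepending the finite output produced while reading $u$ still yields an ultimately periodic word, namely $d_\B(r)$. Running the identical argument in $\mathcal{T}^*_{E,r}$ with $\Phi^*$ settles $d_\B^*(r)$ (finite expansions, ending in $0^\omega$, being trivially covered). I expect no genuine obstacle here beyond \cref{thm:main_transducer}: the only points to verify are the determinism of the reading, which is immediate from the transition definitions, and the observation that each per-block output depends solely on the entry state.
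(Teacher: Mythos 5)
Your proposal is correct and follows essentially the same route as the paper: finiteness of $\mathcal{T}_{E,r}$ and $\mathcal{T}^*_{E,r}$ from \cref{thm:main_transducer}, then the pigeonhole principle applied to the states visited during the periodic part of the input, and determinism to conclude that the output repeats. The paper phrases this via two positions $k<\ell$ congruent modulo the period that reach the same state, whereas you track states at block boundaries via the map $\Phi$, but these are the same argument.
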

  
\begin{proof}
We do the reasoning with $d_\B(r)$, but the same applies to $d_\B^*(r)$.
Assume that $\B$ has a preperiod of length $t$ and a period of length $p$. Since the transducer $\mathcal{T}_{E,r}$ is finite and the input sequence $\B$ is ultimately periodic, there exist integers $k,\ell$ with $t\le k<\ell$ and $k\equiv \ell\pmod p$ such that reading the prefixes of $\B$ of length $k$ and $\ell$ from $r$ lead to the same state. This shows that $d_\B(r)$ is ultimately periodic of period $\ell-k$. 
\end{proof}


\begin{example}
    Consider the golden mean $\varphi$ which is a Pisot number and $4\varphi+1$ which is not a Pisot number (its conjugate is $3-2\sqrt{5}\simeq -1.47$). The product of the two is a Pisot number (its conjugate is $(13-5\sqrt{5})/2\simeq 0.91$). \cref{thm:main_transducer} does not apply while Schmidt's theorem holds. 

    Let $E=\{\varphi,4\varphi+1\}$ and let $\B\in E^{\mathbb{N}}$ be the alternate base $(\varphi,4\varphi+1)^{\omega}$. We have $d_{\B}(1)=1410^{\omega}$ and $d_{\sigma(\B)}(1)=7051(10)^{\omega}=d^*_{\sigma(\B)}(1)$, so $d^*_{\B}(1)=1407051(10)^{\omega}$. Therefore, we can deduce the finite subtransducer of $\mathcal{T}_{E,1}^*$ given in \cref{fig:sub-transducer-alternate}, which corresponds to the input given by the alternate base $\B$.

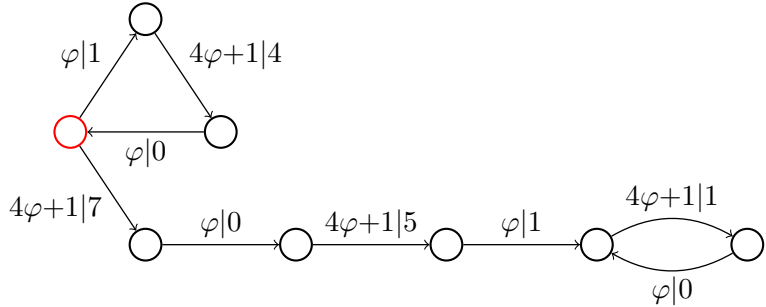
\begin{figure}[ht] 
\begin{center}
\begin{tikzpicture}[->,thick,main node/.style={circle,draw,minimum size=12pt,inner sep=2pt}]
\tikzstyle{every node}=[shape=circle,draw=red,minimum size=12pt,inner sep=2pt]

\node(a)[color=red] at (0,0) {};

\tikzstyle{every node}=[shape=circle,fill=none,draw=black,minimum size=12pt,inner sep=2pt]

\node(b1) at (1,1.5) {};
\node(b2) at (2,0) {};

\node(c1) at (1,-1.5) {};
\node(c2) at (3,-1.5) {};
\node(c3) at (5,-1.5) {};
\node(c4) at (7,-1.5) {};
\node(c5) at (9,-1.5) {};

\tikzstyle{every path}=[color =black, line width = 0.5 pt]
\tikzstyle{every node}=[shape=rectangle,draw=none,minimum size=10pt,inner sep=2pt]

\draw [->] (a) to [] node [above left] {\small $\varphi|1$}  (b1);
\draw [->] (b1) to [] node [above right] {\small $4\varphi{+}1|4$}  (b2);
\draw [->] (b2) to [] node [below] {\small $\varphi|0$}  (a);

\draw [->] (a) to [] node [below left] {\small $4\varphi{+}1|7$}  (c1);
\draw [->] (c1) to [] node [above] {\small $\varphi|0$}  (c2);
\draw [->] (c2) to [] node [above] {\small $4\varphi{+}1|5$}  (c3);
\draw [->] (c3) to [] node [above] {\small $\varphi|1$}  (c4);
\draw [->] (c4) to [bend left] node [above] {\small $4\varphi{+}1|1$}  (c5);
\draw [->] (c5) to [bend left] node [below] {\small $\varphi|0$}  (c4);

\end{tikzpicture}
\end{center}
\caption{Part of the transducer $\mathcal{T}_{E,1}^*$ with inputs in $\{\varphi(4\varphi+1),(4\varphi+1)\varphi\}^*$.}
\label{fig:sub-transducer-alternate}
\end{figure}
    
However, the transducer $\mathcal{T}_{E,1}^*$ is infinite. Indeed, let us recall that every Pisot number $\beta$ is a Parry number, i.e., the $\beta$-expansion of $1$ if finite or ultimately periodic, and that every quadratic Parry number is a Pisot number, see~\cite{Bassino02}. 
Therefore, $4\varphi+1$ is not a Parry number. This means that if $\B=(4\varphi+1)^\omega$, then $d^*_{\B}(1)$ is aperiodic and the transducer $\mathcal{T}_{E,1}^*$ has infinitely many different states. This example shows that if we omit the condition of \cref{thm:main_transducer} that every base is a Pisot number, the result does not hold. 
\end{example}


\subsection{Cantor real bases giving rise to a periodic expansion of a given real number}

We first recall a classical result in combinatorics on words; for instance, see~\cite[Thm.~1.5.3]{AS03}. 

\begin{lemma}[Lyndon--Sch\"utzenberger] \label{lem:LS}
    Let $x,y$ be two non-empty words. The following three conditions are equivalent: 
    \begin{enumerate}
        \item $xy=yx$. 
        \item There exist positive integers $i,j$ such that $x^i=y^j$. 
        \item \label{lem:3} There exist a non-empty word $z$ and positive integers $i,j$ such that $x=z^i$ and $y=z^j$. 
    \end{enumerate}
\end{lemma}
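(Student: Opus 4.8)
The plan is to establish the three equivalences by proving the two genuinely substantive implications $(1)\Rightarrow(3)$ and $(2)\Rightarrow(3)$, together with the trivial reverse implications $(3)\Rightarrow(1)$ and $(3)\Rightarrow(2)$; this yields $(1)\Leftrightarrow(3)$ and $(2)\Leftrightarrow(3)$, hence the full equivalence. The two trivial implications are immediate by substitution: if $x=z^i$ and $y=z^j$, then $xy=z^{i+j}=yx$, which gives $(3)\Rightarrow(1)$, and $x^j=z^{ij}=y^i$, which gives $(3)\Rightarrow(2)$ with exponents $j$ and $i$.

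For the heart of the statement, $(1)\Rightarrow(3)$, I would argue by strong induction on the total length $|x|+|y|$. Assume $xy=yx$ and, without loss of generality (the relation being symmetric in $x$ and $y$), that $|x|\le|y|$. Since $x$ is a prefix of $xy=yx$ and $|x|\le|y|$, the word $x$ is a prefix of $y$; write $y=xt$. If $t$ is empty, then $x=y$ and the conclusion holds with $z=x$ and $i=j=1$. Otherwise $t$ is non-empty, and substituting $y=xt$ into $xy=yx$ gives $x(xt)=(xt)x$, whence, after cancelling the leading factor $x$, the identity $xt=tx$. The pair $(x,t)$ consists of two non-empty words that commute and satisfy $|x|+|t|<|x|+|y|$, so the induction hypothesis provides a non-empty word $z$ and positive integers $a,b$ with $x=z^a$ and $t=z^b$; then $y=xt=z^{a+b}$, and condition $(3)$ holds. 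The base case $|x|=|y|=1$ reduces to the observation that two commuting single letters must be equal.

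For $(2)\Rightarrow(3)$, I would exploit that the word $w:=x^i=y^j$ admits both $|x|$ and $|y|$ as periods. Since $|w|$ is a positive common multiple of $|x|$ and $|y|$, one has $|w|\ge\mathrm{lcm}(|x|,|y|)\ge|x|+|y|-\gcd(|x|,|y|)$, the last inequality being equivalent to $(a-1)(b-1)\ge 0$ after writing $|x|=ga$ and $|y|=gb$ with $g=\gcd(|x|,|y|)$ and $\gcd(a,b)=1$. The theorem of Fine and Wilf then guarantees that $w$ has period $g$; taking $z$ to be the prefix of $w$ of length $g$, and using that $x$ and $y$ are prefixes of $w$ of lengths divisible by $g$, one obtains $x=z^{|x|/g}$ and $y=z^{|y|/g}$, which is condition $(3)$. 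Alternatively, one can avoid invoking Fine and Wilf and run a second induction on $|x|+|y|$ along the same lines as in the previous paragraph.

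The main obstacle is the inductive step of $(1)\Rightarrow(3)$: the delicate points are the size reduction (reducing to $|x|\le|y|$ by symmetry and extracting the non-empty remainder $t$) and the left-cancellation turning $x\cdot xt=xt\cdot x$ into $xt=tx$, which is exactly what makes the induction hypothesis applicable to a strictly smaller pair of words. Everything else is routine bookkeeping.
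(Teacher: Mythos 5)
Your proof is correct. The paper does not actually prove this lemma---it simply cites it as a classical result (Allouche--Shallit, Thm.~1.5.3)---and your argument (strong induction on $|x|+|y|$ with the cancellation $x\cdot xt = xt\cdot x \Rightarrow xt=tx$ for $(1)\Rightarrow(3)$, Fine--Wilf via $\mathrm{lcm}(|x|,|y|)\ge |x|+|y|-\gcd(|x|,|y|)$ for $(2)\Rightarrow(3)$, plus the trivial converses) is precisely the standard textbook proof found in that reference, with all steps, including the base case and the length reduction, handled correctly.
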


A non-empty word $x$ is \emph{primitive} if it is not an integer power of a shorter word. In the previous lemma, the word $z$ in (\ref{lem:3}) can be taken primitive. 

In what follows, we consider the same notations and assumptions as in~\cref{thm:main_transducer}. From Corollary~\ref{cor:AlternateBase-implies-Periodicity}, we know that for any $r\in\mathbb{Q}(\delta)\cap[0,1]$, the quasi-greedy $\B$-expansion of $r$ is ultimately periodic (or equivalently, its greedy $\B$-expansion if finite or ultimately periodic) whenever $\B$ is an alternate base over the alphabet $E$ of the transducers. The next proposition studies under which conditions on the transducers $\mathcal{T}_{E,r}$ and $\mathcal{T}^*_{E,r}$ the only Cantor real bases over $E$ providing periodic expansions of $r$ are given by the ultimately alternate bases. We will refer to the property described in the statement as the \emph{$2$-walk property}.

\begin{proposition}
\label{pro:2loops}
  Let $\delta$ be an algebraic integer of degree $d$, let $E$ be a finite alphabet of Pisot numbers of degree $d$ belonging to $\mathbb{Z}[\delta]$, and let $r\in\mathbb{Q}(\delta)\cap [0,1]$. The Cantor real bases $\B\in E^\mathbb{N}$ such that $d_{\B}(r)$ is finite or ultimately periodic (resp.\ $d_{\B}^*(r)$ is ultimately periodic) are exactly the ultimately alternate bases if and only if there are no two distinct words $u,v\in E^*$ such that, in the transducer $\mathcal{T}_{E,r}$ (resp.\ $\mathcal{T}^*_{E,r}$), there are two closed walks with inputs $u$ and $v$ respectively, starting from the same state and with the same output.
\end{proposition}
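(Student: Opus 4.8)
The plan is to prove both implications by contraposition, using \cref{cor:AlternateBase-implies-Periodicity} for the reverse inclusion and the finiteness of $\mathcal{T}_{E,r}$ from \cref{thm:main_transducer}. I will argue with $\mathcal{T}_{E,r}$ and $d_\B(r)$, the quasi-greedy case being identical. Recall that the output of the transducer fed with $\B$ from state $r$ is exactly $d_\B(r)$; a finite expansion corresponds to an output ending in $0^\omega$ (since $\mathsf{T}_\beta(0)=0$ and the emitted digit is $0$), so ``$d_\B(r)$ finite or ultimately periodic'' means precisely that the output is ultimately periodic.

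Suppose first that the $2$-walk property holds, i.e.\ there are distinct $u,v\in E^*$ and a state $q$ carrying two closed walks from $q$ with inputs $u,v$ and a common output $w$. As the transducer is letter-to-letter, $|u|=|v|=|w|$. Fix a path from $r$ to $q$ with input $p$ and output $o$, let $t\in\{0,1\}^\mathbb{N}$ be aperiodic (e.g.\ Sturmian), and consider the coding $h\colon 0\mapsto u$, $1\mapsto v$. Feeding $\B=p\,h(t)$ keeps returning to $q$ and produces the ultimately periodic output $o\,w^\omega$. Since $u\ne v$ have equal length, $h$ sends aperiodic sequences to aperiodic sequences, so the suffix $h(t)$ of $\B$, and hence $\B$ itself, is aperiodic; moreover $\prod\beta_n=+\infty$ as $E\subset\mathbb{R}_{>1}$, so $\B$ is a genuine Cantor real base that is not ultimately alternate. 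This exhibits a non-ultimately-alternate base with ultimately periodic $d_\B(r)$, while \cref{cor:AlternateBase-implies-Periodicity} shows ultimately alternate bases also give periodic expansions; hence the two families differ, proving ``LHS $\Rightarrow$ RHS'' contrapositively.

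Conversely, assume the $2$-walk property fails and let $\B$ be aperiodic with $d_\B(r)$ ultimately periodic; I aim for a contradiction. Write the run of $\B$ as states $s_0=r,s_1,\dots$ and output $\mathbf{y}=d_\B(r)$, with period $p$ from some index $N$. Since $\mathcal{T}_{E,r}$ is finite, among the pairs $(s_n,n\bmod p)$ with $n\ge N$ some value $(q,c)$ recurs infinitely often, say at times $m_1<m_2<\cdots$. For each $k$, the segment $U_k=\beta_{m_k}\cdots\beta_{m_{k+1}-1}$ labels a closed walk at $q$ of length a multiple of $p$; thanks to the alignment $m_k\equiv c\pmod p$ and the eventual $p$-periodicity of $\mathbf{y}$, its output equals $\rho^{t_k}$, where $\rho=\mathbf{y}_{m_1}\cdots\mathbf{y}_{m_1+p-1}$ and $t_k=(m_{k+1}-m_k)/p$. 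Because there are no two distinct closed walks from $q$ with equal output, for each $t$ there is at most one input of a closed walk from $q$ with output $\rho^{t}$; call it $g(t)$, so $U_k=g(t_k)$. Concatenating the closed walks realizing $g(s)$ and $g(t)$ gives a closed walk at $q$ with output $\rho^{s+t}$, whence $g(s)g(t)=g(s+t)=g(t+s)=g(t)g(s)$ by uniqueness. Thus the nonempty words $U_k$ pairwise commute, and \cref{lem:LS} yields a primitive word $z$ with $U_k=z^{e_k}$ for all $k$. Consequently the tail $\beta_{m_1}\beta_{m_1+1}\cdots=U_1U_2\cdots=z^\omega$ is purely periodic, so $\B$ is ultimately periodic, a contradiction. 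Hence every base with an ultimately periodic expansion is ultimately alternate, which together with \cref{cor:AlternateBase-implies-Periodicity} gives ``RHS $\Rightarrow$ LHS''.

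The main obstacle is the rigidity argument in the last paragraph. The delicate point is forcing the output return blocks to be genuine powers of a single word $\rho$ rather than cyclic rotations, which is precisely why the pigeonhole is applied to the refined pairs $(s_n,n\bmod p)$ and not to the states alone; the rest consists in extracting commutativity of the return words from the absence of a second closed walk, so that \cref{lem:LS} applies and collapses the whole tail onto a single primitive root.
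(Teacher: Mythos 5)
Your proof is correct and follows essentially the same route as the paper's: the forward direction via aperiodic bases in $t\{u,v\}^\omega$, and the converse via a pigeonhole on return times aligned with the output period followed by Lyndon--Sch\"utzenberger. The only cosmetic difference is that in the converse you extract commutativity of the return words directly from the uniqueness of closed walks with a given output (the $g(s)g(t)=g(t)g(s)$ step), whereas the paper phrases the same computation as a proof by contradiction producing two distinct closed walks $w_i^{n_j}\ne w_j^{n_i}$ with equal output.
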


\begin{proof}
As usual, we do the reasoning with $\mathcal{T}_{E,r}$ only, but the same applies to $\mathcal{T}^*_{E,r}$. Assume that there exist two closed walks with distinct inputs $u$ and $v$ starting from some state $s$ with the same output $w$. Since $s$ is accessible from $r$ by construction of $\mathcal{T}_{E,r}$, there exists a path from $r$ to $s$, say with input label $t$. Any input Cantor sequence in $t\{u,v\}^{\omega}$ will provide an ultimately periodic output. Note that $\{u,v\}^{\omega}$ contains (uncountably many) non-periodic sequences.

For the converse, assume that there exists an aperiodic Cantor real base $\B=(\beta_n)_{n\in\mathbb{N}} \in E^{\mathbb{N}}$ such that the corresponding output $d_\B(r)=xyyy\cdots$ is ultimately periodic (the finite case only means that $y=0$). For all $n\ge 0$, we let $r_n$ be the state reached when reading the input $\beta_0\cdots \beta_{n-1}$ from the initial state $r$. 
Since the set of states of $\mathcal{T}_{E,r}$ is finite by~\cref{thm:main_transducer}, when reading the input $\B$, there exists a state that appears infinitely often in the sequence $(r_{|x|+n|y|})_{n\in\mathbb{N}}$. This input $\B$ can be factorized accordingly to highlight all these occurrences as 
\[\B=w_0w_1w_2w_3\cdots,\] 
such that $r_{|w_0|}=r_{|w_0w_1|}=r_{|w_0w_1w_2|}=\cdots$, $|w_0|=|x|+n_0 |y|$ for some $n_0\ge 0$ and for every $j\ge 1$, $|w_j|=n_j|y|$ for some $n_j\ge 1$. In particular, for every $j\ge 1$, the output corresponding to the input $w_j$ starting from $r_{|w_0|}$ is $y^{n_j}$.

We now prove that there exist $i$ and $j$ such that $1\le i<j$ and $w_i^{n_j}\ne w_j^{n_i}$. Proceed by contradiction and assume that for all $i,j\ge 1$ with $i<j$, we have $w_i^{n_j}=w_j^{n_i}$. In particular, $w_1^{n_j}=w_j^{n_1}$ for all $j\ge 2$. By~\cref{lem:LS}, this implies that for all $j\ge 2$, there exist a non-empty primitive word $z_j$ and integers $\ell_j,m_j\ge 1$ such that $w_1=z_j^{\ell_j}$ and $w_j=z_j^{m_j}$. Since $w_1=z_j^{\ell_j}=z_k^{\ell_k}$ with primitive $z_j,z_k$ for all $j,k\ge 2$, \cref{lem:LS} tells us that $z_j=z_k$ for all $j,k\ge 2$, which we denote by~$z$. Therefore every $w_j$ is a power of $z$ and the sequence $\B=w_0z^{\omega}$ is ultimately periodic, a contradiction. Consequently, we have found a state $r_{|w_0|}$ from which start two different closed walks of respective inputs $w_i^{n_j}$ and $w_j^{n_i}$ but with the same output $y^{n_in_j}$. 
\end{proof}

With notation and assumptions of~\cref{thm:main_transducer}, we show that the property given in~\cref{pro:2loops} about having two closed walks with distinct inputs and same output can be decided in $\mathcal{O}((\# Q)^3)$ steps, where $Q$ is the finite set of states of the transducer $\mathcal{T}_{E,r}$ (resp.\ $\mathcal{T}^*_{E,r}$). We now describe the decision procedure inspired by the partition refinement algorithm or Hopcroft's minimization algorithm, a classical result in automata theory. 

\begin{definition} 
Suppose that the hypotheses of~\cref{thm:main_transducer} hold and let $r\in\mathbb{Q}(\delta)\cap[0,1]$. For each state $s$ of $\mathcal{T}_{E,r}$, we recursively define a sequence $(P_{s,k})_{k\ge 1}$ of sets of pairs of states. 
We first define $P_{s,1}$ as the set of pairs of states $(t_1,t_2)$ for which there exist distinct $\beta_1,\beta_2\in E$ and $a\in\mathbb{N}$ such that the transitions
\[
    t_1\stackrel{\beta_1 \,|\, a}{\xrightarrow{\hspace{1cm}}} s  
    \quad \text{ and }\quad
    t_2\stackrel{\beta_2 \,|\, a}{\xrightarrow{\hspace{1cm}}} s 
\]
are present in $\mathcal{T}_{E,r}$.
This situation is depicted on the left of~\cref{fig:sit1}.
\begin{figure}[ht]
\centering
\begin{tikzpicture}
[>=stealth',photon/.style={decorate,decoration={snake,post length=1mm}}]
\tikzstyle{every node}=[shape=circle,fill=none,draw=black,minimum size=18pt,inner sep=2pt]

\node(q1) at (0,2) {$t_1$};
\node(q2) at (0,0) {$t_2$};
\node(s) at (2.5,1) {$s$};

\tikzstyle{every path}=[color =black, line width = 0.5 pt]
\tikzstyle{every node}=[shape=circle,draw=none,minimum size=8pt,inner sep=1pt]

\draw [->] (q2) to [] node [below] {$\beta_2|a$}  (s);
\draw [->] (q1) to [] node [above] {$\beta_1|a$}  (s);

\begin{scope}[shift={(5,0)}]]
\tikzstyle{every node}=[shape=circle,fill=none,draw=black,minimum size=18pt,inner sep=2pt]

\node(r1) at (0,2) {$t_1$};
\node(r2) at (0,0) {$t_2$};
\node(s1) at (2.5,2) {$q_1$};
\node(s2) at (2.5,0) {$q_2$};
\node(pp) at (5,1) {$s$};

\tikzstyle{every path}=[color =black, line width = 0.5 pt]
\tikzstyle{every node}=[shape=circle,draw=none,minimum size=10pt,inner sep=2pt]

\draw [->] (r2) to [] node [below] {$\beta_2|a$}  (s2);
\draw [->] (r1) to [] node [above] {$\beta_1|a$}  (s1);
\draw [->,photon] (s2) to [] node [below] {$x_2|y$}  (pp);
\draw [->,photon] (s1) to [] node [above] {$x_1|y$}  (pp);

\end{scope}
\end{tikzpicture}
\caption{$(t_1,t_2)\in P_{s,1}$ (left) and $(t_1,t_2)\in P_{s,k+1}$ (right).}
\label{fig:sit1}
\end{figure}
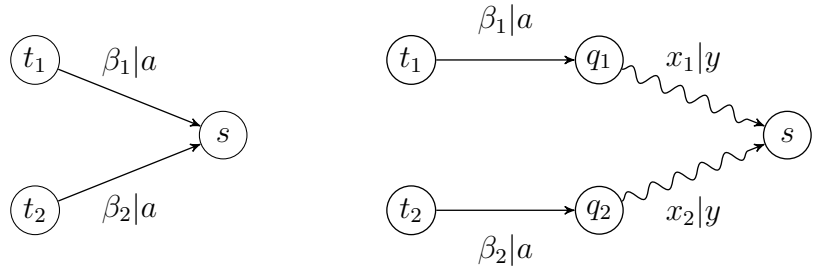 

Now, suppose that $k\ge 1$ and that the set of pairs $P_{s,k}$ has been already defined. We define $P_{s,k+1}$ as the union of $P_{s,k}$ and the set of pairs of states $(t_1,t_2)$ for which there exist a pair $(q_1,q_2)$ in $P_{s,k}$,  $\beta_1,\beta_2\in E$ (not necessarily distinct in this case) and $a\in\mathbb{N}$ such that 
the transitions
\[
    t_1\stackrel{\beta_1 \,|\, a}{\xrightarrow{\hspace{1cm}}} q_1  
    \quad \text{ and }\quad
    t_2\stackrel{\beta_2 \,|\, a}{\xrightarrow{\hspace{1cm}}} q_2 
\]
are present in $\mathcal{T}_{E,r}$.
The situation is depicted on the right of~\cref{fig:sit1}. Edges drawn with a wavy style represent a path of input $x_1$ and $x_2$ with same output $y$. Note that by induction, the last letters of $x_1$ and $x_2$ must be distinct. 

By construction and since $\mathcal{T}_{E,r}$ has finitely many states, the sequence of sets $(P_{s,k})_{k\ge 1}$ is stationary. We let $P_s$ denote the maximal so-obtained set of pairs.

Mutatis mutandis, we also define sets $(P^*_{s,k})_{k\ge 1}$ and $P^*_s$ for states $s\in \mathcal{T}^*_{E,r}$.
\end{definition}    

\begin{proposition}
\label{prop:2-walks-characterization}
Consider the assumptions of~\cref{thm:main_transducer} and let $r\in\mathbb{Q}(\delta)\cap[0,1]$. There exist two closed walks in $\mathcal{T}_{E,r}$ (resp.\ $\mathcal{T}_{E,r}^*$) starting from a same state with same output but distinct inputs if and only if there exists a state $t$ of $\mathcal{T}_{E,r}$ (resp.\ $\mathcal{T}_{E,r}^*$) such that the pair $(t,t)$ belongs to $P_t$ (resp.\ $P^*_t$).
\end{proposition}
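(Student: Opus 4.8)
The plan is to read the recursive definition of the sets $P_{s,k}$ as a backward reachability computation in the \emph{product graph} on pairs of states, whose steps are the matched-output transitions $(t_1,t_2)\to(t_1',t_2')$ coming from $t_1\xrightarrow{\beta_1\mid a}t_1'$ and $t_2\xrightarrow{\beta_2\mid a}t_2'$ for some $a\in\mathbb{N}$ and $\beta_1,\beta_2\in E$. Unwinding the definition, $(t_1,t_2)\in P_s$ holds exactly when there is a walk in this product graph from $(t_1,t_2)$ to a pair $(a,b)$ that merges into $s$ through \emph{distinct} letters (the base case $P_{s,1}$). In particular, $(t,t)\in P_t$ holds if and only if there are two closed walks based at $t$ with the same output and with distinct \emph{last} input letters. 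The reverse implication of the proposition is then immediate: such a pair of closed walks has distinct inputs, since they already differ in their last letter, so it directly witnesses two closed walks from a common state with the same output but distinct inputs.

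For the direct implication I first isolate the arithmetic feature of $\mathcal{T}_{E,r}$ that makes the argument work, namely that a transition is determined by the triple (input letter, output digit, target state): if $a\xrightarrow{\beta\mid c}t$ and $b\xrightarrow{\beta\mid c}t$ then $a=b$. This follows at once from the explicit transition rule, since $\lfloor\beta a\rfloor=c=\lfloor\beta b\rfloor$ together with $\mathsf{T}_\beta(a)=t=\mathsf{T}_\beta(b)$ forces $\beta a=c+t=\beta b$, hence $a=b$ (and likewise for $\mathcal{T}^*_{E,r}$ using $\lceil\beta r-1\rceil$ and $\mathsf{T}_\beta^*$, the boundary state $0$ being treated separately). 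Equivalently, two \emph{distinct} states that merge into a common state with the same output must do so along \emph{distinct} input letters. This co-determinism is the crux of the whole proof, as it rules out the one configuration that would otherwise block the construction below.

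Now suppose we are given two closed walks based at a common state $s$, with the same output word $w$ but distinct input words $u\ne v$; both have the same length $n$, and I write $(p_i)_{0\le i\le n}$ and $(q_i)_{0\le i\le n}$ for the two state sequences, with $p_0=q_0=p_n=q_n=s$. Let $j_0$ be the first index where $u_{j_0}\ne v_{j_0}$. By forward determinism of the transducer, $p_i=q_i$ for all $i\le j_0$, so the product walk stays on the diagonal up to time $j_0$. Let $j_1$ be the first time after $j_0$ at which the product walk returns to the diagonal, say $p_{j_1}=q_{j_1}=t_1$; such a time exists because $p_n=q_n=s$. I then examine the merging step $(p_{j_1-1},q_{j_1-1})\to(t_1,t_1)$: if $j_1=j_0+1$ its two sources coincide but its two input letters are $u_{j_0}\ne v_{j_0}$, while if $j_1>j_0+1$ its two sources $p_{j_1-1}\ne q_{j_1-1}$ are distinct, so the co-determinism lemma forces the two input letters to differ. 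In either case this step is a distinct-letter merge whose target $(t_1,t_1)$ lies on the diagonal.

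Finally I exploit that the product walk is closed at $(s,s)$ to cyclically rotate it so that it starts and ends at $(t_1,t_1)$ and has the distinct-letter merge $(p_{j_1-1},q_{j_1-1})\to(t_1,t_1)$ as its \emph{last} step. This rotated walk is a sequence of matched-output product transitions starting at the diagonal pair $(t_1,t_1)$ and terminating with a distinct-letter merge into $t_1$, which is precisely a certificate that $(t_1,t_1)\in P_{t_1}$. The same argument applies verbatim to $\mathcal{T}^*_{E,r}$ and $P^*$. I expect the main obstacle to be exactly the co-determinism step: without it, a segment of the product walk could reconverge to the diagonal along a common input letter, and the rotation would no longer end in a base-case (distinct-letter) merge. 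Identifying and proving this property of the greedy and quasi-greedy transitions is what makes the rotation argument go through.
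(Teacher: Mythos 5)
Your proof is correct and follows essentially the same route as the paper's: locate a state $t$ where the two walks merge with the same output letter but distinct input letters, and rotate the closed walks so that this merge becomes the final step, which certifies $(t,t)\in P_t$ via the backward construction of the sets $P_{t,k}$. The main difference is one of explicitness: you isolate and prove the co-determinism of the greedy and quasi-greedy transitions (a transition is determined by its input letter, output digit and target state), a fact the paper uses only implicitly when it backtracks along the common suffix $v$ to identify the merge state, so your write-up is a more careful version of the same argument.
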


\begin{proof}
The sufficient condition is straightforward. Now assume that $s$ is a state from which start two closed walks with the same output and distinct inputs of the form $u\beta v$ and $u'\beta'v$ such that $\beta,\beta'\in E$ are distinct letters and $u,u',v\in E^*$. Backtracking the walks, let $t$ be the first state with two incoming transitions 
\[
q_1\stackrel{\beta|a}{\xrightarrow{\hspace{1 cm}}} t \text{ and }q_2\stackrel{\beta'|a}{\xrightarrow{\hspace{1 cm}}} t.
\]
Otherwise stated, reading the input $v$ from $t$ leads to $s$. Then the state $t$ satisfies the condition $(t,t)\in P_t$.
\end{proof}

\begin{corollary}
With assumptions of~\cref{thm:main_transducer}, the property of having two closed walks with distinct input and same output is decidable in $\mathcal{O}((\# Q)^3)$ steps, where $Q$ is the set of states of $\mathcal{T}_{E,r}$ (or $\mathcal{T}^*_{E,r}$ respectively). 
\end{corollary}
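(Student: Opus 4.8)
The plan is to reduce the decision to a backward-reachability computation in a product graph and then bound its cost. By \cref{prop:2-walks-characterization}, the sought property holds if and only if there is a state $t$ with $(t,t)\in P_t$; hence it suffices to explain how the sets $P_s$ are computed and to count the operations involved. First I would form the \emph{output-product graph} $G$ on the vertex set $Q\times Q$, placing a directed edge from $(t_1,t_2)$ to $(q_1,q_2)$ whenever there are input letters $\beta_1,\beta_2\in E$ and an output $a\in\mathbb{N}$ with transitions $t_1\xrightarrow{\beta_1\mid a}q_1$ and $t_2\xrightarrow{\beta_2\mid a}q_2$ in $\mathcal{T}_{E,r}$. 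Since every state has exactly $\#E$ outgoing transitions and the output of a transition is determined by its source state and input letter, the edges leaving a fixed vertex $(t_1,t_2)$ are found by scanning the $(\#E)^2$ pairs of input letters and retaining those whose two outputs coincide. As $E$ is a fixed finite alphabet, this is $\mathcal{O}(1)$ work per vertex, so $G$ has $(\# Q)^2$ vertices and $\mathcal{O}((\# Q)^2)$ edges.

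Next I would observe that, for a fixed state $s$, the set $P_s$ is exactly the set of vertices of $G$ from which one can reach, along directed edges of $G$, some \emph{base pair} for $s$, that is, a pair $(t_1,t_2)$ admitting transitions $t_1\xrightarrow{\beta_1\mid a}s$ and $t_2\xrightarrow{\beta_2\mid a}s$ with $\beta_1\ne\beta_2$. Indeed, $P_{s,1}$ is precisely this set of base pairs, and the recursive step producing $P_{s,k+1}$ from $P_{s,k}$ is one step of backward traversal along $G$; since $(P_{s,k})_{k\ge 1}$ is an increasing sequence of subsets of the finite set $Q\times Q$, it stabilises, and its limit $P_s$ is the set of vertices that reach the base pairs. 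This limit is computed by a single breadth-first (or depth-first) search from the base pairs, following the edges of $G$ in reverse, at cost $\mathcal{O}((\# Q)^2+\#\text{edges})=\mathcal{O}((\# Q)^2)$. Collecting the base pairs for $s$ from the incoming transitions of $s$ costs at most $\mathcal{O}((\# Q)^2)$ as well.

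Finally, I would run this procedure once for each state $s\in Q$, at each stage testing in constant time whether the diagonal pair $(s,s)$ lies in $P_s$, and declaring that the two-walk property holds as soon as some $s$ passes the test; correctness is guaranteed by \cref{prop:2-walks-characterization}. Each of the $\# Q$ searches costs $\mathcal{O}((\# Q)^2)$, so the total running time is $\mathcal{O}((\# Q)^3)$. The same argument applies verbatim to $\mathcal{T}^*_{E,r}$ with the sets $P^*_s$. I do not expect a serious obstacle here: the only point deserving care is the edge count of $G$, which relies on treating $\#E$ as a constant (the complexity is measured purely in the number of states $\# Q$) and on the fact that comparing two candidate transitions amounts to comparing their outputs, both the source state and the input letter already determining those outputs.
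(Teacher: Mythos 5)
Your proposal is correct and follows essentially the same route as the paper: the paper also invokes \cref{prop:2-walks-characterization} and observes that each of the $\#Q$ computations of a set $P_s$ costs at most $(\#Q)^2$ steps, which is exactly what your backward-reachability search on the output-product graph over $Q\times Q$ implements. Your write-up merely makes explicit the fixed-point/BFS mechanism and the constant-factor dependence on $\#E$ that the paper leaves implicit.
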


\begin{proof}
    Using \cref{prop:2-walks-characterization}, we have to compute the sets $P_s$ for each $s\in Q$. Each of these $\# Q$ computations can be done in at most $(\# Q)^2$ steps. Hence the conclusion.
\end{proof}

\begin{example} 
\label{ex:no-2-walk}
Consider the Pisot number $1+\sqrt{2}$ and its square $3+2\sqrt{2}$. By a straightforward analysis, see~\cref{fig:exa2_2}, the reader may be convinced that the $2$-walk property does not hold, or apply the algorithm we have described. 

\begin{figure}[ht] 
\begin{center}
\begin{tikzpicture}[->,thick,main node/.style={circle,draw,minimum size=12pt,inner sep=2pt}]
\tikzstyle{every node}=[shape=circle,draw=red,minimum size=12pt,inner sep=2pt]

\node(a)[color=red] at (0,0) {};

\tikzstyle{every node}=[shape=circle,fill=none,draw=black,minimum size=12pt,inner sep=2pt]

\node(b1) at (3,0) {};
\node(b2) at (-3,0) {};

\tikzstyle{every path}=[color =black, line width = 0.5 pt]
\tikzstyle{every node}=[shape=rectangle,draw=none,minimum size=10pt,inner sep=2pt]
\tikzset{every loop/.style={min distance=10mm,in=30,out=-30,looseness=15}}

\draw [->] (a) to [bend left=15] node [above] {\small $1+\sqrt{2}|2$}  (b1);
\draw [->] (a) to [bend left=15] node [below] {\small $3+2\sqrt{2}|5$} (b2);

\draw [->] (b1) to [bend left=15] node [below] {\small $1+\sqrt{2}|0$}  (a);
\draw [->] (b1) to [loop right] node [right] {\small $3+2\sqrt{2}|2$} ();

\tikzset{every loop/.style={min distance=10mm,in=210,out=-210,looseness=15}}

\draw [->] (b2) to [bend left=15] node [above] {\small $1+\sqrt{2}|1$}  (a);
\draw [->] (b2) to [loop left] node [left] {\small $3+2\sqrt{2}|4$} ();

\end{tikzpicture}
\end{center}
\caption{The transducer $\mathcal{T}^*_{E,1}$ for  $E=\{1+\sqrt{2},(1+\sqrt{2})^2\}$.}
\label{fig:exa2_2}
\end{figure}
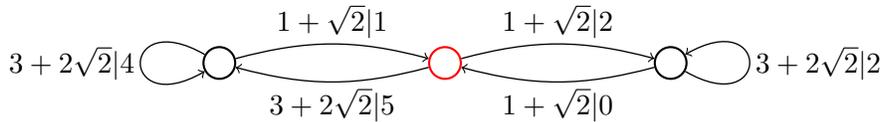

\end{example}

\begin{example} 
\label{ex:smallPisot}
We now provide an example where the $2$-walk property appears. Let $\beta$ be the smallest Pisot number, i.e., the real root of the polynomial $X^3-X-1$. Consider the set $E=\{\beta,\beta^3\}$.  We may apply \cref{cor:pisotpowers}. The transducer $\mathcal{T}^*_{E,1}$ is depicted in~\cref{fig:exa2}.

\begin{figure}[ht] 
\begin{center}
\begin{tikzpicture}[->,thick,main node/.style={circle,draw,minimum size=12pt,inner sep=2pt}]
\tikzstyle{every node}=[shape=circle,draw=red,minimum size=12pt,inner sep=2pt]

\node(a)[color=red] at (5,0) {};

\tikzstyle{every node}=[shape=circle,fill=none,draw=black,minimum size=12pt,inner sep=2pt]

\node(b1) at (0,0) {};
\node(b2) at (2.5,2.5) {};
\node(b3) at (0,5) {};
\node(b4) at (5,5) {};

\tikzstyle{every path}=[color =black, line width = 0.5 pt]
\tikzstyle{every node}=[shape=rectangle,draw=none,minimum size=10pt,inner sep=2pt]
\tikzset{every loop/.style={min distance=10mm,in=-30,out=30,looseness=10}}

\draw [->] (a) to [bend left=15] node [below left] {\small $\beta|1$}  (b2);
\draw [->,color=cyan] (a) to [bend right=15] node [above right] {\small $\beta^3|2$} (b2);

\draw [->] (b1) to [] node [left] {\small $\beta|0$}  (b3);
\draw [->,color=orange] (b1) to [] node [below] {\small $\beta^3|0$}  (a);

\draw [->,color=orange] (b2) to [] node [above left] {\small $\beta|0$}  (b1);
\draw [->,color=violet] (b2) to [] node [above left] {\small $\beta^3|0$}  (b4);

\draw [->] (b3) to [] node [above] {\small $\beta|0$}  (b4);
\draw [->] (b3) to [] node [above right] {\small $\beta^3|1$}  (b2);

\draw [->,color=violet] (b4) to [] node [right] {\small $\beta|0$}  (a);
\draw [->] (b4) to [loop] node [right] {\small $\beta^3|1$}  ();
\end{tikzpicture}
\end{center}
\caption{The transducer $\mathcal{T}^*_{E,1}$ for  $E=\{\beta,\beta^3\}$, where $\beta$ is the smallest Pisot number.}
\label{fig:exa2}
\end{figure}
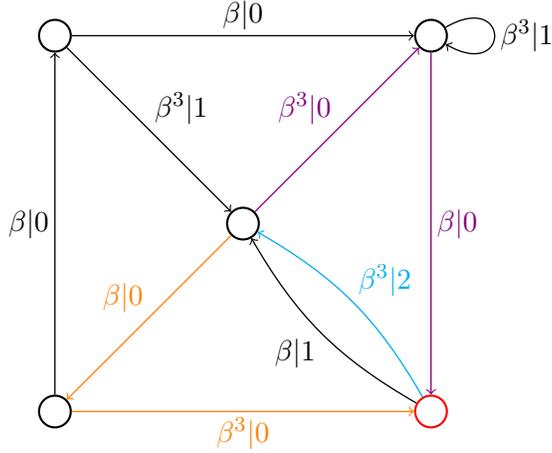
In this example, $(1,1) \in S_1$ because 
\[
    1\stackrel{\beta^3\beta\beta^3|200}{\xrightarrow{\hspace{1.5 cm}}} 1 
    \quad\text{ and }\quad
    1\stackrel{\beta^3\beta^3\beta|200}{\xrightarrow{\hspace{1.5 cm}}} 1.
\]
Let us write $u=\beta^3\beta\beta^3$ and $v=\beta^3\beta^3\beta$. These words correspond to the two highlighted closed walks in the figure. 
Therefore, any Cantor real base $\B\in\{u,v\}^\omega$ leads to $d_{\B}^{*}(1)=(200)^{\omega}$. Also note that for any such Cantor real base $\B$, we have $d_{\sigma^{3n}(\B)}^*(1)=(200)^{\omega}$ for any $n\ge 1$. 

Actually, it turns out that for $\B=uvvuvuuv\cdots$ (the Thue--Morse word over $\{u,v\}$), we have $d_{\sigma^n(\B)}^{*}(1)=w_n(200)^{\omega}$ for all $n\ge 0$, where $w_n\in E^*$ is a prefix depending on $n$. This will be made explicit in~\cref{subsec:examplesmallpisot}. 

Notice that we also have 
\[
    1\stackrel{\beta\beta\beta^3|100}{\xrightarrow{\hspace{1.5 cm}}} 1 
    \quad\text{ and }\quad
    1\stackrel{\beta\beta^3\beta|100}{\xrightarrow{\hspace{1.5 cm}}} 1.
\] 
However, this alternative choice of input words leads to aperiodic $d_{\sigma^n(\B)}^{*}(1)$ for some $n\geq 0$. This also will be made explicit in~\cref{rem:Aperiodic-112-221}. 
\end{example}

\section{Connectedness}
\label{sec:connectedness}

The transducers depicted in~\cref{fig:exa1},~\cref{fig:exa2_2} and~\cref{fig:exa2} are strongly connected. If one is interested in the complexity measure introduced in \cref{rem:complexity}, when the transducer is strongly connected, we can find Cantor real bases having full complexity (all states are visited). On the other hand, if the transducer has several strongly connected components (SCC), a Cantor base visiting only one SCC will have a smaller complexity with respect to that measure. That is a conceivable motivation, but studying the structure for its own sake is also of interest.

Recall that a Parry number $\beta$ is said to be \emph{simple} if $d_{\beta}^*(1)$ is purely periodic (or equivalently, if $d_\beta(1)$ is finite). Hence, its associated Parry automaton is strongly connected. One might thus expect that if the input Cantor real bases only contains simple Parry numbers, then the quasi-greedy transducer $\mathcal{T}^*_{E,1}$ in~\cref{thm:main_transducer} should be strongly connected. Quite surprisingly, this is not the case. In fact, it is also possible that the transducer $\mathcal{T}^*_{E,1}$ is strongly connected even though the alphabet $E$ is made up of non-simple Parry numbers.

We present some examples describing all the possible situations. Consider the Pisot numbers $\gamma_1=1+\sqrt{2}$, $ \gamma_2=2+2\sqrt{2}$, $\gamma_3=4+3\sqrt{2}$, $\gamma_4=5+4\sqrt{2}$ and $\gamma_5=1+\sqrt[3]{2}+\sqrt[3]{4}$. Let us recall that every Pisot number is a Parry number, and that every quadratic Parry number is a Pisot number, see~\cite{Bassino02}. One can computationally check the results summarized in \cref{tab:scc}.

\begin{table}[ht]
\renewcommand{\arraystretch}{1.5}
\[
\begin{tabular}{c|c|c|c}
First base & Second base & $\mathcal{T}^*_{1,E}$ is connected &  $\mathcal{T}^*_{1,E}$ is not connected \\ \hline \hline
Simple & Simple & $E=\{\gamma_1,\gamma_2\}$  & $E=\{\gamma_3,\gamma_4\}$ \\ \hline
Simple & Non simple & $E=\{\gamma_1,\gamma_1^2\}$ & $E=\{\gamma_1^2,\gamma_1^3\}$ \\ \hline 
Non simple & Non simple & $E=\{\gamma_5^2,\gamma_5^3\}$ & $E=\{\gamma_1^2,\gamma_2^2\}$ 
\end{tabular}
\]
\caption{Structure of the transducers.}\label{tab:scc}
\end{table}
\renewcommand{\arraystretch}{1}

The transducer $\mathcal{T}^*_{E,1}$ with $E=\{\gamma_1,\gamma_1^2\}$ is already depicted in~\cref{fig:exa2_2} and the remaining corresponding transducers are depicted in~\cref{fig:connectedness} except $\mathcal{T}^*_{E,1}$ with $E=\{\gamma_5^2,\gamma_5^3\}$, which has $127$ states. 

\begin{figure}[ht]
\centering
\begin{subfigure}{\textwidth}
    \centering
    \begin{tikzpicture}[->,thick,main node/.style={circle,draw,minimum size=12pt,inner sep=2pt}]
\tikzstyle{every node}=[shape=circle,draw=red,minimum size=12pt,inner sep=2pt]

\node(a0)[color=red] at (-1.5,0) {};

\tikzstyle{every node}=[shape=circle,fill=none,draw=black,minimum size=12pt,inner sep=2pt]

\node(b01) at (-4.5,0) {};
\node(b02) at (1.5,0) {};

\tikzstyle{every path}=[color =black, line width = 0.5 pt]
\tikzstyle{every node}=[shape=rectangle,draw=none,minimum size=10pt,inner sep=2pt]

\draw [->] (a0) to [bend left=40] node [above] {\small $1+\sqrt{2}|2$}  (b02);
\draw [->] (a0) to [bend left=40] node [below] {\small $2+2\sqrt{2}|4$}  (b01);

\draw [->] (b01) to [bend left=40] node [above] {\small $1+\sqrt{2}|1$}  (a0);
\draw [->] (b01) to [bend right=10] node [above] {\small $2+2\sqrt{2}|3$}  (a0);

\draw [->] (b02) to [bend left=40] node [below] {\small $1+\sqrt{2}|0$}  (a0);
\draw [->] (b02) to [bend right=10] node [below] {\small $2+2\sqrt{2}|1$}  (a0);

\end{tikzpicture}
    \caption{$\mathcal{T}^*_{E,1}$ for $E=\{1+\sqrt{2},2+2\sqrt{2}\}$}
\end{subfigure}

\begin{subfigure}{\textwidth}    
\begin{tikzpicture}[->,thick,main node/.style={circle,draw,minimum size=12pt,inner sep=2pt}]
\tikzstyle{every node}=[shape=circle,draw=red,minimum size=12pt,inner sep=2pt]

\node(a1)[color=red] at (0,0) {};

\tikzstyle{every node}=[shape=circle,fill=none,draw=black,minimum size=12pt,inner sep=2pt]

\node(b11) at (-7.5,0) {};
\node(b12) at (-5,0) {};
\node(b13) at (-2.5,0) {};
\node(b14) at (2.5,0) {};
\node(b15) at (5,0) {};

\tikzstyle{every path}=[color =black, line width = 0.5 pt]
\tikzstyle{every node}=[shape=rectangle,draw=none,minimum size=10pt,inner sep=2pt]

\draw [->] (a1) to [bend left=15] node [below] {\small $4+3\sqrt{2}|8$}  (b13);
\draw [->] (a1) to [bend left=15] node [above] {\small $5+4\sqrt{2}|10$}  (b14);

\tikzset{every loop/.style={min distance=10mm,in=60,out=120,looseness=10}}
\draw [->] (b11) to [loop] node [above] {\small $4+3\sqrt{2}|6$}  ();
\tikzset{every loop/.style={min distance=10mm,in=240,out=300,looseness=10}}
\draw [->] (b11) to [loop] node [below] {\small $5+4\sqrt{2}|8$}  ();

\draw [->] (b12) to [] node [below] {\small $4+3\sqrt{2}|4$}  (b11);
\draw [->] (b12) to [bend left=15] node [above] {\small $5+4\sqrt{2}|6$}  (b13);

\draw [->] (b13) to [bend left=15] node [above] {\small $4+3\sqrt{2}|1$}  (a1);
\draw [->] (b13) to [bend left=15] node [below] {\small $5+4\sqrt{2}|2$}  (b12);

\draw [->] (b14) to [] node [above] {\small $4+3\sqrt{2}|5$}  (b15);
\draw [->] (b14) to [bend left=15] node [below] {\small $5+4\sqrt{2}|6$}  (a1);

\tikzset{every loop/.style={min distance=10mm,in=60,out=120,looseness=10}}
\draw [->] (b15) to [loop] node [above] {\small $4+3\sqrt{2}|3$}  ();
\tikzset{every loop/.style={min distance=10mm,in=240,out=300,looseness=10}}
\draw [->] (b15) to [loop] node [below] {\small $5+4\sqrt{2}|4$}  ();
\end{tikzpicture}
\caption{$\mathcal{T}^*_{E,1}$ for $E=\{4+3\sqrt{2},5+4\sqrt{2}\}$}
\end{subfigure}

\begin{subfigure}{\textwidth}
    \centering
    \begin{tikzpicture}[->,thick,main node/.style={circle,draw,minimum size=12pt,inner sep=2pt}]
\tikzstyle{every node}=[shape=circle,draw=red,minimum size=12pt,inner sep=2pt]

\node(a)[color=red] at (0,0) {};

\tikzstyle{every node}=[shape=circle,fill=none,draw=black,minimum size=12pt,inner sep=2pt]

\node(b1) at (3,0) {};
\node(b2) at (6,0) {};
\node(b3) at (0,-3) {};
\node(b4) at (3,-3) {};
\node(b5) at (6,-3) {};

\tikzstyle{every path}=[color =black, line width = 0.5 pt]
\tikzstyle{every node}=[shape=rectangle,draw=none,minimum size=10pt,inner sep=2pt]

\draw [->] (a) to [] node [above] {\small $3+2\sqrt{2}|2$}  (b1);
\draw [->] (a) to [bend left=15] node [right] {\small $7+5\sqrt{2}|14$}  (b3);

\tikzset{every loop/.style={min distance=10mm,in=60,out=120,looseness=10}}
\draw [->] (b1) to [loop] node [above] {\small $3+2\sqrt{2}|4$}  ();
\draw [->] (b1) to [bend left=15] node [above] {\small $7+5\sqrt{2}|11$}  (b2);

\draw [->] (b2) to [bend left=15] node [below] {\small $3+2\sqrt{2}|3$}  (b1);
\draw [->] (b2) to [] node [right] {\small $7+5\sqrt{2}|9$}  (b5);

\draw [->] (b3) to [] node [below] {\small $3+2\sqrt{2}|0$}  (b4);
\draw [->] (b3) to [bend left=15] node [left] {\small $7+5\sqrt{2}|0$}  (a);

\tikzset{every loop/.style={min distance=10mm,in=240,out=300,looseness=10}}
\draw [->] (b4) to [loop] node [below] {\small $3+2\sqrt{2}|0$}  ();
\draw [->] (b4) to [] node [right] {\small $7+5\sqrt{2}|5$}  (b1);

\draw [->] (b5) to [bend left=15] node [below] {\small $3+2\sqrt{2}|1$}  (b4);
\draw [->] (b5) to [bend right=15] node [above] {\small $7+5\sqrt{2}|3$}  (b4);

\end{tikzpicture}
    \caption{$\mathcal{T}^*_{E,1}$ for $E=\{3+2\sqrt{2},7+5\sqrt{2}\}$}
\end{subfigure}

\begin{subfigure}{\textwidth}
    \centering
    \begin{tikzpicture}[->,thick,main node/.style={circle,draw,minimum size=12pt,inner sep=2pt}]
\tikzstyle{every node}=[shape=circle,draw=red,minimum size=12pt,inner sep=2pt]

\node(a)[color=red] at (0,-0.5) {};

\tikzstyle{every node}=[shape=circle,fill=none,draw=black,minimum size=12pt,inner sep=2pt]

\node(b1) at (-2.5,2) {};
\node(b2) at (2.5,2) {};

\tikzstyle{every path}=[color =black, line width = 0.5 pt]
\tikzstyle{every node}=[shape=rectangle,draw=none,minimum size=10pt,inner sep=2pt]

\draw [->] (a) to [] node [below right] {\small $3+2\sqrt{2}|5$}  (b2);
\draw [->] (a) to [] node [below left] {\small $12+5\sqrt{2}|23$}  (b1);

\tikzset{every loop/.style={min distance=10mm,in=210,out=150,looseness=10}}
\draw [->] (b1) to [bend left=15] node [above] {\small $3+2\sqrt{2}|1$}  (b2);
\draw [->] (b1) to [loop] node [left] {\small $12+8\sqrt{2}|7$}  ();

\tikzset{every loop/.style={min distance=10mm,in=-30,out=30,looseness=10}}
\draw [->] (b2) to [loop] node [right] {\small $3+2\sqrt{2}|4$}  ();
\draw [->] (b2) to [bend left=15] node [below] {\small $12+8\sqrt{2}|19$}  (b1);

\end{tikzpicture}
    \caption{$\mathcal{T}^*_{E,1}$ for $E=\{3+2\sqrt{2},12+8\sqrt{2}\}$}
\end{subfigure}
  \caption{Four of the six transducers built on Parry numbers.}
  \label{fig:connectedness}
\end{figure}
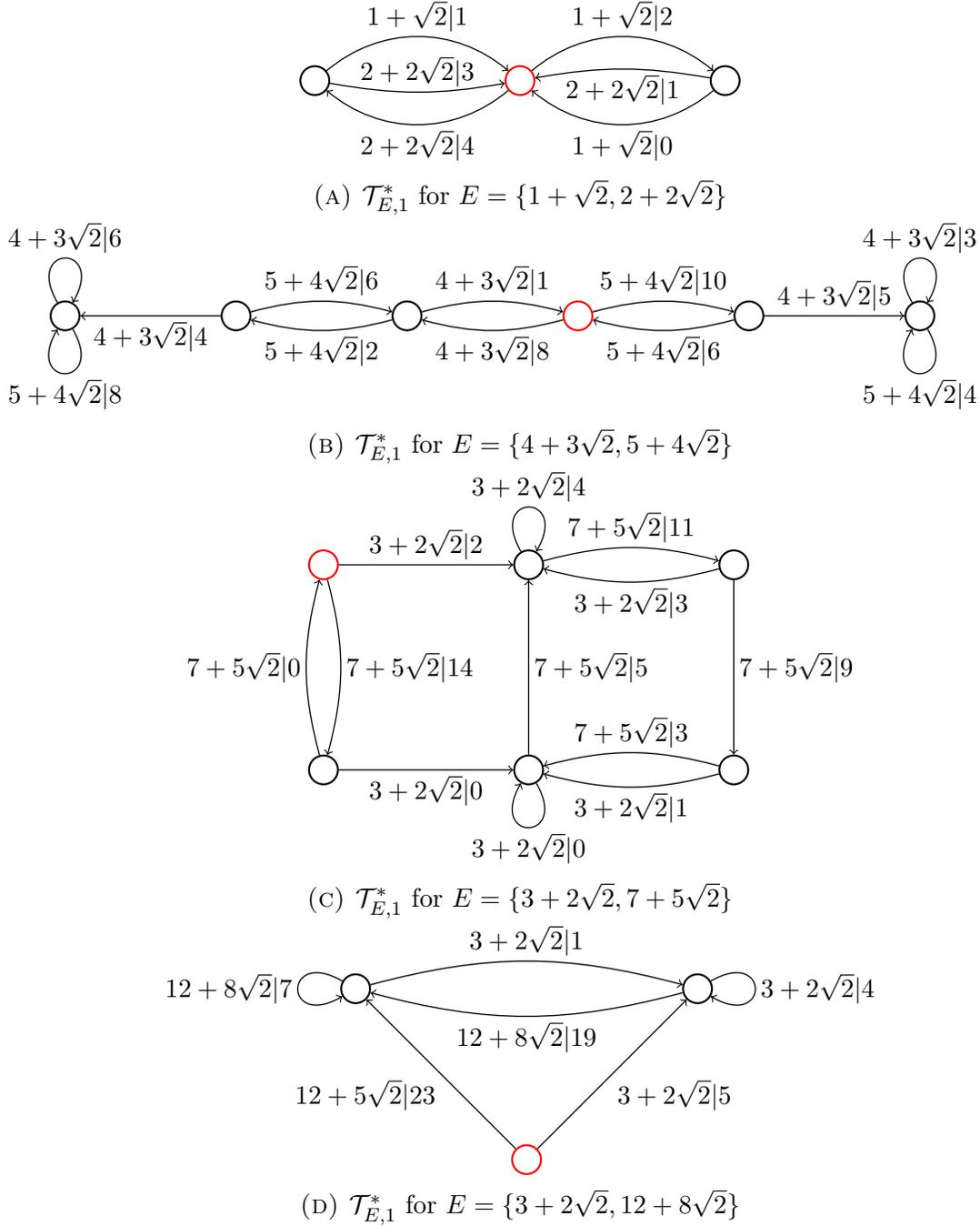

Therefore, there are no general results about the strong connectedness of the transducer by considering only if the bases are Parry numbers or not. 

\section{Decidability}\label{sec:decidability}

The main question discussed in this section is the following one. Given an infinite word (typically, an automatic sequence) can we decide whether it is a valid $\B$-expansion?

\subsection{Some background}
To be self-contained, we briefly recap some classical results about first-order logic. We refer the reader to \cite{BHMV,RigoBook} for details. 

A \emph{structure} $\mathcal{S}=\langle D, (R_i)_i\rangle$ consists of a domain $D$ and relations $R_i$ on $D$. 
A first-order formula with no free variables is a {\em sentence}.
The \emph{first-order theory} $\text{Th}(\mathcal{S})$ (also denoted by $\text{FO}(\mathcal{S})$ in other contexts) of a structure $\mathcal{S}$ is the set of true sentences of~$\mathcal{S}$. It is \emph{decidable}, if there exists an algorithm that may decide if any sentence in $\mathcal{S}$ is true or false. Otherwise, it is \emph{undecidable}. 

Let $b\ge 2$ be an integer. We let $V_b\colon \mathbb{N}\to \mathbb{N}$ be the map defined by $V_b(0)=1$ and for $n\ge 1$, $V_b(n)=i$ if $b^i$ is the greatest power of $b$ that divides $n$. We are interested in a particular structure of domain $\mathbb{N}$ and with relations given by the graphs of $+$ and $V_b$. 
A fundamental result of B\"uchi is that $\text{Th}(\langle\mathbb{N},+,V_b\rangle)$ is decidable \cite{Buchi1960}. This implies that many properties of $b$-automatic sequences are decidable. For a comprehensive presentation, we refer the reader to \cite{ShallitBook}. We state the main result that we will use and, which is in essence B\"uchi's theorem \cite[Thm.~6.4.1]{ShallitBook}

\begin{theorem}\label{thm:decidable}
    There is an algorithm that, given a formula $\phi$ with no free
variables, phrased in first-order logic, using only the universal and existential
quantifiers, addition and subtraction of variables and constants, logical operations, comparisons, and indexing into a given automatic sequence $\mathbf{x}$, will decide the truth of that formula.
\end{theorem}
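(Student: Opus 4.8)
The plan is to reduce the decision problem to the known decidability of $\text{Th}(\langle\mathbb{N},+,V_b\rangle)$ by exhibiting, for every formula in the allowed fragment, an explicit finite automaton that recognizes its solution set. The guiding principle is the automata-theoretic incarnation of B\"uchi's theorem: a predicate is definable in the allowed syntax precisely when the set of tuples satisfying it is recognized by a finite automaton reading the base-$b$ expansions of the variables synchronously. I would proceed by structural induction on the formula, attaching to each subformula $\phi(n_1,\ldots,n_k)$ with free variables a deterministic automaton $\mathcal{A}_\phi$ that reads $k$-tuples of digits in parallel (the shorter expansions padded with leading zeros) and accepts exactly the tuples making $\phi$ true.

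First I would treat the base cases. The graph of addition $\{(x,y,z):x+y=z\}$ is a classical synchronously recognizable relation; subtraction over $\mathbb{N}$ and the comparisons $<,\le,=$ reduce to it, and numeric constants define singletons, so every purely arithmetic atom is recognizable. The essential atomic case is indexing: since $\mathbf{x}$ is $b$-automatic, it is the output of a deterministic finite automaton with output reading base-$b$ expansions, hence for each letter $a$ of its (finite) alphabet the fiber $\{n:\mathbf{x}[n]=a\}$ is $b$-recognizable. Because the alphabet is finite, a comparison of the form $\mathbf{x}[t_1]=\mathbf{x}[t_2]$, where $t_1,t_2$ are linear terms in the variables, is equivalent to the finite disjunction $\bigvee_a(\mathbf{x}[t_1]=a \wedge \mathbf{x}[t_2]=a)$, so it too yields a finite automaton. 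This is the step where automaticity of $\mathbf{x}$ is genuinely used.

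For the inductive step, the Boolean connectives $\wedge,\vee,\neg$ are realized by the product construction, by union, and by complementation of deterministic automata; the existential quantifier $\exists n_i$ is realized by projecting away the $i$-th input track (yielding a nondeterministic automaton) followed by the subset construction, and the universal quantifier is $\neg\exists\neg$. Applying this procedure to the given sentence $\phi$, which has no free variables, produces an automaton over a one-letter alphabet whose initial state is accepting if and only if $\phi$ is true; inspecting that state decides the truth of $\phi$. All constructions are effective, so the whole procedure is an algorithm, which is exactly the one implemented in the \texttt{Walnut} system of Shallit and coauthors, justifying the cited reference.

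The step I expect to be the main obstacle is not any single construction but the bookkeeping needed to keep the encoding conventions coherent throughout the induction: choosing a fixed reading direction, padding unequal-length expansions consistently, and handling leading and trailing zeros so that the addition relation, the indexing predicate, and the projection used for quantifiers all operate on the same encoding. Once such a convention is fixed, each construction above is routine and effective, and the reduction to $\text{Th}(\langle\mathbb{N},+,V_b\rangle)$ is complete.
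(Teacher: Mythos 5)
The paper does not prove this statement at all: it is quoted verbatim as a known result, namely B\"uchi's theorem in the form of \cite[Thm.~6.4.1]{ShallitBook}, and is used as a black box. Your argument is correct and is precisely the standard automata-theoretic proof underlying that citation (synchronized automata for $+$ and the comparisons, fibers of the automatic sequence for indexing, Boolean closure and projection-plus-determinization for the quantifiers, then an emptiness check for the sentence), so there is nothing in the paper to compare it against beyond agreeing with the reference.
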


This result is generalized to a larger family of numeration systems: those such that addition can be computed by a finite automaton and for which the set of greedy expansions of integers is regular. Hence the results that we will present are straightforward to adapt to this class. For instance, one can consider a sequence such as the Fibonacci word and, more generally, sequences that are automatic with respect to a Pisot numeration system \cite{BruyereHansel,RigoBook}.

Putting together \cref{thm:decidable} and \cref{cor:automatic}, if $\B$ is an automatic sequence, then we can decide many combinatorial properties of $d_{\sigma^n(\B)}^*(1)$ such as periodicity, avoiding powers, etc.

\subsection{A general (and technical) result}

An infinite sequence $\mathbf{a}=(a_n)_{n\in\mathbb{N}}$ of non-negative integers is said to be $\B$-admissible if $\mathbf{a}=d_{\B}(x)$ for some $x\in[0,1)$. The quasi-greedy expansions of $1$ can be used to determine if an infinite sequence $\mathbf{a}$ is $\B$-admissible as expressed by the following result generalizing Parry's theorem to Cantor real bases.

\begin{theorem}[\cite{CC2021}]
\label{thm:GenParry}
Let $\B$ be a Cantor real base. An infinite sequence $\mathbf{a}$ over $\mathbb{N}$ is $\B$-admissible if and only if
$\sigma^n(\mathbf{a}) <_{\mathrm{\lex}}d_{\sigma^n(\B)}^*(1)$ for all $n\in\mathbb{N}$.
 \end{theorem}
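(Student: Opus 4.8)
The plan is to prove both implications from the strict monotonicity of the greedy map and the characterization of greedy expansions by the inequalities~\eqref{eq:greedy-inequalities-Cantor}. First I would record a \emph{monotonicity lemma}: for $x<y$ in $[0,1]$ one has $d_{\B}(x)<_{\lex}d_{\B}(y)$. If two greedy expansions first differ at position~$j$ with digit $a_j<b_j$, then bounding the tail of the smaller one by~\eqref{eq:greedy-inequalities-Cantor} at $\ell=j+1$ gives $\val_{\B}(\mathbf a)<\val_{\B}(\mathbf b)$; since $\val_{\B}\circ d_{\B}$ is the identity, the lexicographic order of greedy expansions mirrors the real order. I also need that $\val_{\B}$ is continuous on sequences with uniformly bounded digits (immediate because $\prod_n\beta_n=+\infty$), so that $\val_{\B}(d_{\B}^*(1))=\lim_{z\to 1^-}z=1$ and, from the recursive description of $d_{\B}^*$, that $d_{\B}^*(1)$ never ends in $0^{\omega}$; and that lexicographic order passes to limits from below, i.e.\ $\mathbf u<_{\lex}\mathbf v_k$ for all $k$ with $\mathbf v_k\to\mathbf v$ forces $\mathbf u\le_{\lex}\mathbf v$. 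Together these yield the \emph{upper bound lemma}: for every $y\in[0,1)$, $d_{\B}(y)<_{\lex}d_{\B}^*(1)$, the limit giving $\le_{\lex}$ and strictness following from $\val_{\B}(d_{\B}(y))=y<1$.

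For the direct implication, suppose $\mathbf a=d_{\B}(x)$ with $x\in[0,1)$. Writing $x_n=\mathsf T_{\beta_{n-1}}\cdots\mathsf T_{\beta_0}(x)$ for the successive remainders of the greedy algorithm, an immediate induction shows $x_n\in[0,1)$ and $\sigma^n(\mathbf a)=d_{\sigma^n(\B)}(x_n)$. Applying the upper bound lemma to the base $\sigma^n(\B)$ and the point $x_n$ then gives $\sigma^n(\mathbf a)<_{\lex}d_{\sigma^n(\B)}^*(1)$ for all $n$, as required.

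For the converse, assume $\sigma^n(\mathbf a)<_{\lex}d_{\sigma^n(\B)}^*(1)$ for all $n$. I would reduce the goal to showing that the tail valuations $r_n:=\val_{\sigma^n(\B)}(\sigma^n(\mathbf a))$ all lie in $[0,1)$: the identity $\beta_n r_n=a_n+r_{n+1}$ together with $r_{n+1}\in[0,1)$ forces $a_n=\lfloor\beta_n r_n\rfloor$ and $r_{n+1}=\mathsf T_{\beta_n}(r_n)$, so that $\mathbf a=d_{\B}(r_0)$ with $r_0=\val_{\B}(\mathbf a)\in[0,1)$. The crux, and the step I expect to be the main obstacle, is the strict bound $r_n<1$: comparing $\sigma^n(\mathbf a)$ with $d_{\sigma^n(\B)}^*(1)$ directly is circular, since the tail beyond the first discrepancy must again be controlled. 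I would break the circularity by inducting on the truncation index. Setting $S_N(n)=\sum_{k=0}^{N}a_{n+k}/(\beta_n\cdots\beta_{n+k})$, I claim $S_N(n)<1$ for all $n,N$. Let $\mathbf e=d_{\sigma^n(\B)}^*(1)$ and let $j$ be the first position where $\sigma^n(\mathbf a)$ and $\mathbf e$ differ (necessarily finite, with $a_{n+j}\le e_j-1$). When $j>N$ one has $S_N(n)=\sum_{k\le N}e_k/(\beta_n\cdots\beta_{n+k})<\val_{\sigma^n(\B)}(\mathbf e)=1$ because $\mathbf e$ has a nonzero tail; when $j\le N$, isolating the block up to position~$j$ rewrites the remaining sum as $S_{N-j-1}(n+j+1)$ (or $S_{N-1}(n+1)$ if $j=0$), which has strictly smaller truncation index and is $<1$ by induction, and substituting $a_{n+j}\le e_j-1$ telescopes to $S_N(n)<\sum_{k\le j}e_k/(\beta_n\cdots\beta_{n+k})<1$. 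This bound is uniform in $N$, so the increasing sequence $S_N(n)$ converges to $r_n\le\sum_{k\le j}e_k/(\beta_n\cdots\beta_{n+k})<1$, yielding both convergence of the defining series and the strict greediness inequalities.

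The only genuinely delicate point is therefore the non-circular induction giving $r_n<1$; everything else reduces to the monotonicity of the greedy map and elementary manipulations of the defining series. I would state the two facts about $d_{\B}^*(1)$ used above --- that its valuation is~$1$ and that it never ends in $0^{\omega}$ --- as preliminary observations, both following from $d_{\B}^*(1)=\lim_{z\to 1^-}d_{\B}(z)$ and the recursive description of the quasi-greedy expansion recalled before the statement.
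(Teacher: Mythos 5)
The paper does not prove this statement: it is quoted verbatim from Charlier--Cisternino \cite{CC2021} (the generalization of Parry's admissibility criterion to Cantor real bases), so there is no in-paper proof to compare against. Judged on its own terms, your reconstruction is essentially the classical Parry argument and its overall architecture is sound. The forward direction via $\sigma^n(\mathbf a)=d_{\sigma^n(\B)}(x_n)$ with $x_n\in[0,1)$ and the upper bound lemma is correct, and the converse via the truncated sums $S_N(n)$ is the right way to break the circularity: the strong induction on the truncation index $N$, uniform over the starting index $n$, with the two cases $j>N$ and $j\le N$ and the telescoping after substituting $a_{n+j}\le e_j-1$, does yield the uniform bound $r_n\le\sum_{k\le j}e_k/(\beta_n\cdots\beta_{n+k})<1$, from which both convergence and the greedy inequalities \eqref{eq:greedy-inequalities-Cantor} follow.

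One justification is too quick and, as literally stated, false for general Cantor real bases: continuity of $\val_{\B}$ on sequences with uniformly bounded digits is \emph{not} immediate from $\prod_n\beta_n=+\infty$, since $\sum_n(\beta_0\cdots\beta_n)^{-1}$ may diverge (e.g.\ $\beta_n=1+1/n$). What you actually need is only that $\val_{\B}(d^*_{\B}(1))=1$ and that no tail of $d^*_{\B}(1)$ is $0^\omega$; both follow not from continuity but from the greedy tail bounds themselves (for $z$ close to $1$ the length-$N$ prefix of $d_{\B}(z)$ agrees with that of $d^*_{\B}(1)$ and has partial valuation in $(z-1/(\beta_0\cdots\beta_{N-1}),\,z]$, and the quasi-greedy remainders $(\mathsf T^*)^n(1)$ lie in $(0,1]$). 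With that repair the proof goes through; everything else checks out.
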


Parry's conditions from \cref{thm:GenParry} are far from being trivial to test, even in a simple situation where the Cantor base is the Thue--Morse word $\mathbf{T}=2332\cdots$ over $\{2,3\}$. While the set $\{\sigma^n(\mathbf{B}) : n\ge 0\}$ is finite whenever $\B$ is an alternate base, the set $\{\sigma^n(\mathbf{T}):n\ge 0\}$ is infinite since $\mathbf{T}$ is aperiodic. This explains why the case of alternate bases is simpler to handle.

Furthermore, the set $\{d_{\sigma^n(\B)}^*(1):n\ge 0\}$ does not seem to have any simple expression.  The following technical proposition asserts a decidable question for an automatic development in a Cantor real base. Later on, we will provide three applications of this result.

\begin{theorem} 
\label{prop:decidability}
    Let $E\subset\mathbb{R}_{>1}$ be a finite alphabet of real bases, let $\B\in E^\mathbb{N}$ be a Cantor real base, and let $b\ge 2$ be an integer base. Let $A$ be the finite alphabet of digits that can occur in a $\B$-expansion, and assume that, for all $a\in A$, we are given a first-order formula $\varphi_{a}(j,n)$ of the structure $\langle\mathbb{N}+,V_b\rangle$ that is true if and only if $[d^*_{\sigma^n{(\B)}}(1)]_j=a$, i.e., the digit at position $j$ in $d^*_{\sigma^n{(\B)}}(1)$ is $a$. 
    Then it is decidable whether any given $b$-automatic sequence over $\mathbb{N}$ is $\B$-admissible.  
\end{theorem}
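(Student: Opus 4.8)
The plan is to translate the admissibility criterion of \cref{thm:GenParry} into a single first-order sentence over the structure $\langle\mathbb{N},+,V_b\rangle$ enriched with indexing into the $b$-automatic sequence $\mathbf{a}=(a_n)_{n\in\mathbb{N}}$, and then to invoke \cref{thm:decidable}. By \cref{thm:GenParry}, the sequence $\mathbf{a}$ is $\B$-admissible if and only if $\sigma^n(\mathbf{a})<_{\lex}d^*_{\sigma^n(\B)}(1)$ for every $n\in\mathbb{N}$. The key observation is that both sides of each such comparison are accessible inside the logic: the left-hand digit $[\sigma^n(\mathbf{a})]_k=a_{n+k}$ is obtained by indexing into $\mathbf{a}$, which is permitted by \cref{thm:decidable} since $\mathbf{a}$ is $b$-automatic, whereas the right-hand digit $[d^*_{\sigma^n(\B)}(1)]_k$ is exactly what the hypothesis formulas $\varphi_a(k,n)$ describe.

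First I would encode digit comparisons. As $A$ is finite and every digit occurring in a $\B$-representation, as well as in each $d^*_{\sigma^n(\B)}(1)$, lies in $A$, I would set
\[
  \mathrm{EQ}(n,k):\equiv\bigvee_{a\in A}\bigl(a_{n+k}=a\ \wedge\ \varphi_a(k,n)\bigr),
  \qquad
  \mathrm{LT}(n,j):\equiv\bigvee_{\substack{a,a'\in A\\ a<a'}}\bigl(a_{n+j}=a\ \wedge\ \varphi_{a'}(j,n)\bigr),
\]
where each atom $a_m=a$, for a fixed $a\in A$, denotes the predicate expressing that the term of $\mathbf{a}$ at position $m$ equals $a$. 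Then $\mathrm{EQ}(n,k)$ holds precisely when $[\sigma^n(\mathbf{a})]_k=[d^*_{\sigma^n(\B)}(1)]_k$ and $\mathrm{LT}(n,j)$ holds precisely when $[\sigma^n(\mathbf{a})]_j<[d^*_{\sigma^n(\B)}(1)]_j$; in particular, whenever $a_{n+j}\notin A$ all disjuncts fail, so sequences carrying an out-of-range digit are automatically deemed non-admissible, as they should be.

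Next I would capture the strict lexicographic order by its first-difference description and form the sentence
\[
  \Phi:\equiv\forall n\,\exists j\,\Bigl[\bigl(\forall k\,(k<j\ \to\ \mathrm{EQ}(n,k))\bigr)\ \wedge\ \mathrm{LT}(n,j)\Bigr].
\]
The bracketed subformula asserts that $\sigma^n(\mathbf{a})$ and $d^*_{\sigma^n(\B)}(1)$ agree on the positions $0,\dots,j-1$ while the former is strictly smaller at position $j$, which is exactly $\sigma^n(\mathbf{a})<_{\lex}d^*_{\sigma^n(\B)}(1)$, and which correctly evaluates to false when the two infinite words coincide. Hence $\Phi$ is true if and only if $\mathbf{a}$ is $\B$-admissible. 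Since $A$ is finite, $\mathrm{EQ}$ and $\mathrm{LT}$ are finite Boolean combinations; the comparison $k<j$ is definable from $+$; each $\varphi_a$ is a formula of $\langle\mathbb{N},+,V_b\rangle$; and $V_b$ together with indexing into the $b$-automatic sequence $\mathbf{a}$ fall within the scope of \cref{thm:decidable}. Therefore $\Phi$ is a legitimate sentence to which \cref{thm:decidable} applies, and its truth---equivalently, the $\B$-admissibility of $\mathbf{a}$---is decidable.

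I expect the encoding itself to be routine once these ingredients are assembled; the genuine difficulty is entirely concentrated in the standing hypothesis that the formulas $\varphi_a(j,n)$ describing the digits of $d^*_{\sigma^n(\B)}(1)$ are available. As emphasized before the statement, the family $\{d^*_{\sigma^n(\B)}(1):n\ge 0\}$ admits no transparent description in general, so the real work---deferred to the subsequent applications---lies in actually producing such $\varphi_a$ for concrete Cantor real bases.
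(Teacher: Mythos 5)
Your proposal is correct and follows essentially the same route as the paper's proof: both reduce admissibility to the Parry-type criterion of \cref{thm:GenParry}, encode the lexicographic comparison by its first-difference description with the finite disjunctions over $A$ combining indexing into $\mathbf{a}$ with the hypothesized formulas $\varphi_a$, and then invoke \cref{thm:decidable}. Your additional remark on out-of-range digits is a harmless refinement not present in the paper.
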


\begin{proof}
Let $\mathbf{a}=a_0a_1a_2\cdots$ be a $b$-automatic sequence over $\mathbb{N}$. 
Due to~\cref{thm:GenParry}, we have to check that 
\begin{equation} 
\label{eq:lex1}
    \sigma^n(\mathbf{a})<_{\lex} d_{\sigma^n(\B)}^*(1)\quad\text{ for all } n\in\mathbb{N}.
\end{equation}
For two sequences $\mathbf{u}=u_0u_1\cdots$ and $\mathbf{v}=v_0v_1\cdots$, the lexicographic inequality $\mathbf{u}<_{\lex} \mathbf{v}$ can be expressed as \[(\exists j)\left[u_j<v_j \wedge (\forall i<j) (u_i=v_i)\right].\] 
So, the Parry conditions~\eqref{eq:lex1} become
\[ 
    (\forall n)(\exists j)\left[a_{n+j}<[d_{\sigma^n(\B)}^*(1)]_j \wedge (\forall i<j) (a_{n+i}=[d_{\sigma^n(\B)}^*(1)]_i)\right].
\] 
The inequality $a_{n+j}<[d_{\sigma^n(\B)}^*(1)]_j$
can be expressed as the first-order formula
\[
    \bigvee_{\substack{a,a'\in A\\ a<a'}} [a_{n+j}=a \wedge \varphi_{a'}(j,n)]
\]
while the equality $a_{n+i}=[d_{\sigma^n(\B)}^*(1)]_i$ can be expressed as the first-order formula
\[
    \bigvee_{a\in A} [a_{n+i}=a \wedge \varphi_a(i,n)]
\]
Therefore, we may apply \cref{thm:decidable} and conclude with the proof.
\end{proof}

\subsection{Deciding with an alternate base} 

\begin{corollary} 
\label{cor:alternate-automatic-decidability}
Let $\B$ be an alternate base $\B$ such that $d_{\sigma^i(\B)}^*(1)$ is $b$-automatic for all $i\in\{0,\ldots,p-1\}$ and let $b\ge 2$ be an integer. It is decidable whether any given $b$-automatic sequence over $\mathbb{N}$ is $\B$-admissible.
\end{corollary}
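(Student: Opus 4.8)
The plan is to reduce this statement to the general decidability result \cref{prop:decidability} by exhibiting, for each admissible digit $a$, a suitable first-order formula $\varphi_a(j,n)$ in the structure $\langle\mathbb{N},+,V_b\rangle$. First I would exploit the periodicity of the alternate base: writing $\B=(\beta_0,\ldots,\beta_{p-1})^\omega$, one has $\sigma^n(\B)=\sigma^{n\bmod p}(\B)$ for every $n\in\mathbb{N}$, so the orbit $\{\sigma^n(\B):n\ge 0\}$ consists only of the $p$ shifts $\sigma^0(\B),\ldots,\sigma^{p-1}(\B)$. Consequently the set $\{d_{\sigma^n(\B)}^*(1):n\ge 0\}$ is finite and equals $\{d_{\sigma^i(\B)}^*(1):0\le i<p\}$, a collection of $p$ sequences which, by hypothesis, are all $b$-automatic. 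The alphabet $A$ of digits occurring in a $\B$-expansion is $\{0,\ldots,\max_{0\le i<p}\lceil\beta_i\rceil-1\}$, which is finite.

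Next, for each $i\in\{0,\ldots,p-1\}$ and each digit $a\in A$, I would use the logical characterization of $b$-automaticity: since $d_{\sigma^i(\B)}^*(1)$ is $b$-automatic, the set of positions $j$ at which it carries the letter $a$ is $b$-recognizable, hence there is a first-order formula $\psi_a^{(i)}(j)$ of $\langle\mathbb{N},+,V_b\rangle$ that holds if and only if $[d_{\sigma^i(\B)}^*(1)]_j=a$; such a formula can be produced effectively from an automaton reading the base-$b$ expansion of $j$ and generating this sequence. Moreover, for the fixed modulus $p$, the congruence condition $n\equiv i\pmod p$ is definable in Presburger arithmetic $\langle\mathbb{N},+\rangle$, hence a fortiori in $\langle\mathbb{N},+,V_b\rangle$. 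Combining these ingredients, I would set
\[
    \varphi_a(j,n):=\bigvee_{i=0}^{p-1}\bigl[(n\equiv i\pmod p)\wedge\psi_a^{(i)}(j)\bigr].
\]
Because $d_{\sigma^n(\B)}^*(1)=d_{\sigma^{n\bmod p}(\B)}^*(1)$, this formula is true exactly when $[d_{\sigma^n(\B)}^*(1)]_j=a$, so the family $(\varphi_a)_{a\in A}$ meets the hypothesis of \cref{prop:decidability}.

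Finally, \cref{prop:decidability} applied with these formulas $\varphi_a$ yields an algorithm deciding whether a given $b$-automatic sequence over $\mathbb{N}$ is $\B$-admissible, which is the claim. The genuine mathematical work has already been carried out in \cref{prop:decidability} (and ultimately rests on B\"uchi's theorem, \cref{thm:decidable}); here the only substantial point is the finiteness of the shift orbit granted by periodicity, which turns the a priori infinite family $\{d_{\sigma^n(\B)}^*(1)\}_{n\ge 0}$ into finitely many automatic sequences that can be selected by a Presburger-definable congruence. The main obstacle one must be careful about is that the formulas $\psi_a^{(i)}$ be constructible effectively, i.e.\ that the automata generating the $d_{\sigma^i(\B)}^*(1)$ be available as part of the input rather than merely asserted to exist; granting this (implicit in the hypothesis that these words are $b$-automatic), the argument goes through.
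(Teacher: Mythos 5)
Your proof is correct and follows essentially the same route as the paper: reduce to \cref{prop:decidability} by defining $\varphi_a(j,n)$ as a disjunction over the residues $i$ of $n$ modulo $p$, each conjunct combining the Presburger-definable congruence $n\equiv i\pmod p$ with the formula $\psi$ coming from the $b$-automaticity of $d_{\sigma^i(\B)}^*(1)$. Your added caveat about the effectiveness of the automata being part of the input is a fair observation but does not change the argument.
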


\begin{proof}
    Using \cref{prop:decidability}, it is enough to show how to define the predicate $\varphi_{a}$ as a first-order formula of $\langle\mathbb{N},+,V_b\rangle$. For each $i\in\{0,\ldots,p-1\}$ and each possible digit $a$, since $d_{\sigma^i(\B)}^*(1)$ is $b$-automatic, there exists a first-order formula $\psi_{i,a}(j)$ of the structure $\langle\mathbb{N}+,V_b\rangle$ that is true if and only of $[d_{\sigma^i(\B)}^*(1)]_j=a$. Therefore, we can define $\varphi_a(j,n)$ by
\[
    \bigvee_{i=0}^{p-1} \left( \big[(\exists q)(n=q\cdot p+i)\big]  \wedge \psi_{i,a}(j) \right).
\]
\end{proof}

\emph{Parry alternate bases} are alternate base $\B=(\beta_0,\ldots,\beta_{p-1})^\omega$ with the property that all quasi-greedy $\sigma^i(\B)$-expansions of $1$ are ultimately periodic, for $i\in\{0,\ldots,p-1\}$; see \cite{CCMP}. Such alternate bases fulfill the hypotheses of \cref{cor:alternate-automatic-decidability} since any ultimately periodic sequence is $b$-automatic for all $b\geq 2$.  
Also, note that one can easily extend this corollary to the case of an ultimately alternate base. The formula is straightforward to adapt.

\subsection{Deciding with a single transcendental base}

\cref{cor:alternate-automatic-decidability} permits us to consider some decision problems that were not considered before in the framework of $\beta$-numeration systems. Interestingly, it applies to some transcendental base~$\beta$. For example, one comes from the (shifted) characteristic sequence of powers of $2$, i.e., $\mathbf{p}_2=(p_n)_{n\ge 1}=11010001\cdots$ where $p_n=1$ if and only if $n$ is a power of $2$. This sequence is easily seen to be $2$-automatic. Moreover, all shifted sequences $\sigma^n(\mathbf{p}_2)$ are lexicographically less than $\mathbf{p}_2$ itself because  $0$-blocks are of increasing length. By a well known result of Parry~\cite{Parry1960}, there exists a unique real base $\beta>1$ such that $d_\beta^*(1)=11010001\cdots$. Let us argue that this number $\beta$ is transcendental. Let $f\colon [0,1)\to\mathbb{R}$ be the function defined by 
\[
    f(x):=\sum_{n\ge 0}x^{2^n}.
\] 
Kempner showed that for every integer $b\ge 2$, the real number $f(1/b)$ is transcendental ~\cite{Kempner1916}. Mahler then extended this result to any algebraic number in the open unit interval. 
 
\begin{theorem}\cite{Mahler1929}
\label{thm:trans}
Let $\alpha$ be an algebraic number in $(0,1)$. Then $f(\alpha)$ is transcendental. 
\end{theorem}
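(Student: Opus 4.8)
The plan is to use \emph{Mahler's method}, exploiting the functional equation satisfied by the Fredholm series. Observe that $f(x)=x+f(x^{2})$, equivalently $f(x^{2})=f(x)-x$, which upon iteration gives
\[
    f\bigl(\alpha^{2^{k}}\bigr)=f(\alpha)-\sum_{j=0}^{k-1}\alpha^{2^{j}}\qquad(k\ge 1).
\]
I would argue by contradiction: suppose $\alpha\in(0,1)$ is algebraic and that $f(\alpha)$ is algebraic as well, and set $K=\mathbb{Q}(\alpha,f(\alpha))$, a number field of degree $D$. Since $|\alpha|<1$ the series converges and $f$ is analytic on the unit disk; moreover the displayed identity shows that for every $k$ both $\alpha_{k}:=\alpha^{2^{k}}$ and $f(\alpha_{k})$ lie in $K$, while $\alpha_{k}\to 0$ doubly exponentially.

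First I would build an auxiliary function. For a large integer parameter $N$, using Siegel's lemma (pigeonhole on the $(N+1)^{2}$ coefficients, subject to the vanishing of the first $M$ Taylor coefficients at $0$), I would produce a nonzero polynomial $P(X,Y)=\sum_{0\le i,j\le N}p_{ij}X^{i}Y^{j}\in\mathbb{Z}[X,Y]$ such that $E(x):=P(x,f(x))$ vanishes to order at least $M$ at $x=0$, with the crucial gain $M\gg N^{2}$; the coefficients $p_{ij}$ can be bounded by a constant depending only on $N$ (the Taylor coefficients of $f$ are $0$ or $1$), and in particular \emph{independently of $k$}. I must also check $E\not\equiv 0$: this holds because $f$ is transcendental over $\mathbb{C}(x)$ — by the Hadamard gap theorem the lacunary exponents $2^{n}$ force the unit circle to be a natural boundary of $f$, so $f$ is not an algebraic function, and since $P\ne 0$ we get $E\not\equiv 0$. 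Hence $E$ has isolated zeros, so $E(\alpha_{k})\ne 0$ for all sufficiently large $k$.

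The contradiction would come from comparing two estimates for the nonzero algebraic number $E(\alpha_{k})\in K$. On the analytic side, the order-$M$ vanishing at $0$ gives $|E(\alpha_{k})|\le c_{1}|\alpha_{k}|^{M}=c_{1}|\alpha|^{M2^{k}}$, with $c_{1}$ depending on $N$ but not on $k$. On the arithmetic side, the denominator of $f(\alpha_{k})$ is at most $\mathrm{den}(\alpha)^{2^{k}}\,\mathrm{den}(f(\alpha))$, and since $P$ has degree $\le N$ in each variable, both the common denominator and the house of $E(\alpha_{k})$ grow only like $C^{N2^{k}}$; the Liouville (fundamental) inequality $|\gamma|\ge\mathrm{den}(\gamma)^{-D}\,\overline{|\gamma|}^{-(D-1)}$ for nonzero $\gamma\in K$ then yields a lower bound $|E(\alpha_{k})|\ge L^{-N2^{k}}$ for some constant $L>1$.

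Combining the two, a contradiction requires $c_{1}|\alpha|^{M2^{k}}<L^{-N2^{k}}$, that is $2^{k}\bigl(M\log(1/|\alpha|)-N\log L\bigr)>\log c_{1}$. Because $M\gg N^{2}$ while the arithmetic exponent is only linear in $N$, for $N$ fixed large enough the bracketed quantity is strictly positive, and then for all large $k$ the inequality holds, contradicting $E(\alpha_{k})\ne 0$ together with the Liouville lower bound. Therefore $f(\alpha)$ must be transcendental. The step I expect to be the main obstacle is the auxiliary construction: one must secure the quadratic gain $M\gg N^{2}$ from Siegel's lemma — this is precisely what lets the analytic smallness overcome the linear-in-$N$ Liouville bound — while simultaneously guaranteeing $E\not\equiv 0$ via the lacunarity of $f$. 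The height bookkeeping, although technical, is routine once the doubly-exponential decay of $\alpha_{k}$ is exploited.
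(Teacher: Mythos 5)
The paper offers no proof of this theorem: it is imported as a black box with the citation to Mahler's 1929 paper, so there is no internal argument to compare against. Your sketch is a correct outline of the standard (and original) proof by Mahler's method: the functional equation $f(x^{2})=f(x)-x$ and its iterates, the auxiliary polynomial $P$ from Siegel's lemma, non-vanishing of $E=P(x,f(x))$ via the transcendence of $f$ over $\mathbb{C}(x)$ (natural boundary from the gap theorem), and the clash between the analytic upper bound $c_{1}|\alpha|^{M2^{k}}$ and the Liouville lower bound $L^{-N2^{k}}$ at the points $\alpha_{k}=\alpha^{2^{k}}$.

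Two small points of hygiene, neither fatal. First, with $(N+1)^{2}$ free coefficients you can impose at most $(N+1)^{2}-1$ linear vanishing conditions while keeping a nonzero solution, so the achievable gain is $M\sim N^{2}$, not $M\gg N^{2}$ as you write twice; what the final inequality actually requires is only $M/N\to\infty$ (so that $M\log(1/|\alpha|)$ beats $N\log L$ for large $N$), and $M\sim N^{2}$ supplies that. Second, you must check that the constant $L$ in the lower bound $L^{-N2^{k}}$ can be taken independent of $N$: the $N$-dependent quantities (the number of monomials and $\max|p_{ij}|$) enter the house and denominator of $E(\alpha_{k})$ only as factors without $2^{k}$ in the exponent, so they can be absorbed into a $k$-independent prefactor rather than into $L$. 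Your bookkeeping is consistent with this, but it is the one place where the argument would silently fail if done carelessly.
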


This result allows us to see that the real number $\beta$ we are considering in this example is transcendental. Indeed, if $1/\beta$ were algebraic then $f(1/\beta)=1$ would be transcendental. 

\subsection{Deciding with a Cantor real base}\label{subsec:examplesmallpisot}

Let us illustrate the phenomenon we want to describe using a relatively simple example. It contains the main ideas and avoids cumbersome notation.

We adopt the same notation as in~\cref{ex:smallPisot}. Recall that, in the transducer $\mathcal{T}^*_{E,1}$, the words $u=\beta^3\beta\beta^3$ and $v=\beta^3\beta^3\beta$ are both labels of closed walks starting from the state $1$ with the same output $200$. Let us consider the Cantor base~$\B$ obtained as the Thue--Morse sequence over $\{u,v\}$, i.e.,
\begin{align}
\B=uvvuvu\cdots=\beta^3\beta\beta^3 \beta^3 \beta^3 \beta  \beta^3 \beta^3\beta\beta^3 \beta \beta^3 \beta^3 \beta^3 \beta \beta^3 \beta \beta^3\cdots.\label{eq:tmmodif}
\end{align}
We have already seen that $d_{\sigma^{3n}(\B)}^*(1)=(200)^{\omega}$ for all $n\ge 0$. The computations tend to show that for all $n\ge 0$, there exists a prefix $w_n$ such that $d_{\sigma^n(\B)}^*(1)=w_n(200)^{\omega}$, and that these prefixes have bounded length. Hence, we should have finitely many distinct such prefixes $w_n$. They are listed in~\cref{tab:prefixes}, by order of appearance.
\begin{table}[ht]
\renewcommand{\arraystretch}{1.2} 
\[
\begin{tabular}{l|l}
    $w_n$ & $n$ \\ \hline \hline
    $\varepsilon$ & $0,3,6,9,12,\ldots$ \\
    $10110$ & $1, 10, 19, 31, 37,\ldots$  \\
    $2010$ & $2, 11, 20, 32, 38,\ldots$  \\
    $20020010110$ & $4, 40, 76, 124, 148,\ldots$ \\
    $1010$ & $5, 16, 23, 28, 41,\ldots$  \\
    $20010110$ & $7, 34, 43, 58, 79,\ldots$  \\
    $1002010$ & $8, 35, 44, 59, 80,\ldots$  \\
    $20010102010$ & $13, 25, 49, 67, 85,\ldots$ \\
    $1002002010$ & $14, 26, 50, 68, 86,\ldots$  \\
    $10102010$ & $16, 28, 52, 70, 88,\ldots$ \\
    $2002010$ & $17, 29, 53, 71, 89,\ldots$ \\
    $20020010102010$ & $22, 64, 94, 112, 166,\ldots$ \\
\end{tabular}
\]
\caption{Possible prefixes $w_n$ of $d_{\sigma^n(\B)}^{*}(1)=w_n(200)^{\omega}$.}
\label{tab:prefixes}
\renewcommand{\arraystretch}{1}
\end{table}

Notice that the longest prefix in \cref{tab:prefixes} has length $14$. The following lemma proves that all prefixes $w_n$ indeed belong to \cref{tab:prefixes}. 

\begin{lemma}
\label{lem:finitepref}
    Let $\B$ be the Cantor real base \eqref{eq:tmmodif} over the alphabet of real numbers $\{\beta,\beta^3\}$ given in \cref{ex:smallPisot}. For all $n\ge 0$, there exists $w$ in \cref{tab:prefixes} such that $d_{\sigma^n(\B)}^*(1)=w(200)^{\omega}$. 
\end{lemma}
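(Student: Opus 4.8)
The plan is to read $d^*_{\sigma^n(\B)}(1)$ directly off the finite quasi-greedy transducer $\mathcal{T}^*_{E,1}$ of \cref{fig:exa2}, exploiting the block structure of $\B$. Write $\B=B_0B_1B_2\cdots$, where each $B_k\in\{u,v\}$ has length $3$ and $(B_k)_{k\ge 0}$ is the Thue--Morse sequence over $\{u,v\}$. By \cref{cor:pisotpowers} the transducer $\mathcal{T}^*_{E,1}$ is finite (it is the five-state machine of \cref{fig:exa2}), and $d^*_{\sigma^n(\B)}(1)$ is exactly the output obtained by reading $\sigma^n(\B)$ in $\mathcal{T}^*_{E,1}$ starting from the state $1$.

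First I would record, for each of the five states $s$, the state reached after reading a whole block $u$ (resp.\ $v$) from $s$, together with the emitted word. A direct inspection of \cref{fig:exa2} shows that both $u$ and $v$ fix the state $1$ and emit $200$, and that from each of the other four states a short sequence of blocks returns to $1$. The key combinatorial ingredient is that the Thue--Morse word over $\{u,v\}$ is overlap-free, hence cube-free: it contains neither $uuu$ nor $vvv$. Combining the block-transition table with this bounded-run property, I would show that, starting from any state, reading the blocks $B_k,B_{k+1},\ldots$ in their Thue--Morse order reaches the state $1$ after at most a fixed number $K$ of blocks (one checks $K\le 4$ suffices).

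Next I would argue according to the phase. Writing $n=3q+i$ with $i\in\{0,1,2\}$, the input $\sigma^n(\B)$ begins with the length-$(3-i)$ suffix of $B_q$, followed by the full blocks $B_{q+1}B_{q+2}\cdots$. Reading this suffix from the state $1$ lands in a state $p_1$ depending only on $i$ and $B_q$; note that this partial reading does \emph{not} follow the closed walk labelled by $B_q$, so it must be computed on its own. Then the full blocks are read, and by the previous paragraph the state $1$ is reached after at most $K$ further blocks, at a position that is a multiple of $3$. From that point on the remaining input $\sigma^m(\B)$ with $m\equiv 0\pmod 3$ is a concatenation of whole blocks $u,v$, each of which fixes the state $1$ and emits $200$; hence the output tail is exactly $(200)^\omega$. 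Consequently $d^*_{\sigma^n(\B)}(1)=w_n(200)^\omega$, where $w_n$ is the finite output produced before the state $1$ is reached.

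Finally, $w_n$ depends only on $i$ and on the bounded window $B_qB_{q+1}\cdots B_{q+K}$ of the block sequence; since there are finitely many such windows, only finitely many words $w_n$ can occur, each of length at most $14$. Enumerating the admissible Thue--Morse windows (equivalently, running the implementation referenced in \cref{sec:main_transducer}) then yields precisely the list of \cref{tab:prefixes}, which proves the claim. I expect the main obstacle to be the careful bookkeeping of the phase shift: because the reading of $\sigma^n(\B)$ starts in the middle of a block $B_q$ whenever $i\neq 0$, the initial transient must be handled separately from the closed-walk behaviour of whole blocks, and it is the cube-freeness of Thue--Morse that bounds the length of this transient.
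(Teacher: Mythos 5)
Your argument is correct and follows essentially the same route as the paper: both reduce the claim to a finite verification on the transducer of \cref{fig:exa2}, using the fact that $u$ and $v$ label closed walks at the state $1$ with output $200$ and then checking the finitely many length-$5$ Thue--Morse windows that govern the transient for $n\not\equiv 0\pmod 3$. Your cube-freeness observation is a nice a priori justification of the window length, which the paper only motivates empirically from \cref{tab:prefixes}; just note that the bound of at most $4$ full blocks after the partial block relies on the partially-read block $B_q$ constraining $B_{q+1}B_{q+2}$ (from an arbitrary state with an unconstrained preceding block the worst case is $5$ blocks, e.g.\ $vvuuv$ read from the state reached by $b_1$ in your block-transition table).
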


\begin{proof}
Inspecting \cref{fig:exa2}, we see that $d_{\sigma^{3n}(\B)}^{*}(1)=(200)^{\omega}$ for all $n\ge 0$, i.e., the preperiod is $\varepsilon$. Indeed, we have two highlighted closed walks starting from the initial state with input $u$ or $v$ and the same output $200$. For the remaining cases, we have a closer look at the set of factors of length $5$ of the Thue--Morse word, which give rise to factors of length $15$ in $\B$. This set of factors, by order of appearance within $\mathbf{B}$, is  
\begin{align*} 
F=&\{f_1=uvvuv,f_2=vvuvu,f_3=vuvuu,f_4=uvuuv,\\ 
    & f_5=vuuvv,f_6=uuvvu,f_7=uvvuu,f_8=vvuuv, \\
    & f_9=vuuvu,f_{10}=uuvuv,f_{11}=uvuvv,f_{12}=vuvvu\}.
\end{align*} 
Since our computations in \cref{tab:prefixes} have shown that the longest preperiod should be of length $14$, this suggests that it should be enough to check these factors.

Precisely, for $i\in\{1,2\}$, $\sigma^{3n+i}(\B)$ starts with a prefix of the form $\sigma^i(f)$ of length $15-i$ for some $f\in F$, where the notation $\sigma^i(f)$ means that we drop the first $i$ letters of the finite word $f$. We can check that, in the transducer of \cref{fig:exa2}, these 24 possible prefixes label a closed walk starting from $1$ and output one of the $12$ prefixes of \cref{tab:prefixes} padded with suffixes in $(200)^*$. For example, we have $\sigma(f_1)=\beta\beta^3 vvuv$ and the prefix $\beta\beta^3 v=\beta\beta^3\beta^3\beta^3\beta$ already labels a closed walk starting from $1$, which has $10110$ as output. The output corresponding to $\sigma(f_1)$ is $10110200200200$, which is $10110$ padded three times with $200$ to obtain a word of length $14=15-1$. The output prefixes corresponding to all possible $\sigma^i(f_m)$ are given in \cref{tab:allprefshift}. We see that all are indeed of the form $w(200)^*$ with $w$ belonging to \cref{tab:prefixes}.
\begin{table}[ht]
\renewcommand{\arraystretch}{1.2} 
\[
\begin{tabular}{c|c||c|c}
    Input prefix & Output prefix & Input prefix & Output prefix \\ \hline \hline
    $\sigma(f_1)$ & $10110200200200$ & $\sigma^2(f_1)$ & $2010200200200$ \\ 
    $\sigma(f_2)$ & $20020010110200$ & $\sigma^2(f_2)$ & $1010200200200$ \\ 
    $\sigma(f_3)$ & $20010110200200$ & $\sigma^2(f_3)$ & $1002010200200$ \\
    $\sigma(f_4)$ & $10110200200200$ & $\sigma^2(f_4)$ & $2010200200200$ \\ 
    $\sigma(f_5)$ & $20010102010200$ & $\sigma^2(f_5)$ & $1002002010200$ \\   
    $\sigma(f_6)$ & $10102010200200$ & $\sigma^2(f_6)$ & $2002010200200$ \\   
    $\sigma(f_7)$ & $10110200200200$ & $\sigma^2(f_7)$ & $2010200200200$ \\   
    $\sigma(f_8)$ & $20020010102010$ & $\sigma^2(f_8)$ & $1010200200200$ \\   
    $\sigma(f_9)$ & $20010102010200$ & $\sigma^2(f_9)$ & $1002002010200$ \\   
    $\sigma(f_{10})$ & $10102010200200$ & $\sigma^2(f_{10})$ & $2002010200200$ \\   
    $\sigma(f_{11})$ & $10110200200200$ & $\sigma^2(f_{11})$ & $2010200200200$ \\    
    $\sigma(f_{12})$ & $20010110200200$ & $\sigma^2(f_{12})$ & $1002010200200$ 
\end{tabular}
\]
\caption{Output prefixes associated with words in $F$ and a shift of $1$ or $2$.}\label{tab:allprefshift}
\renewcommand{\arraystretch}{1}
\end{table}

After reading this prefix, we are back to the state $1$ of the transducer and the infinite input suffix that is left to read belongs to $\{u,v\}^\omega$. Hence, the corresponding output for this suffix is $(200)^\omega$. This concludes with the proof.
\end{proof}

This example provides a non-trivial example of decidability with an aperiodic Cantor real base. 

\begin{proposition}
Let $\B$ be the Cantor real base \eqref{eq:tmmodif} over the alphabet of real numbers $\{\beta,\beta^3\}$ given in \cref{ex:smallPisot}.  
It is decidable whether any given $2$-automatic sequence over $\mathbb{N}$ is $\B$-admissible.
\end{proposition}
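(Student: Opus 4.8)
The plan is to invoke \cref{prop:decidability} with $b=2$, so that the whole task reduces to producing, for each digit $a$ in the finite output alphabet $A$ (here $A=\{0,1,2\}$), a first-order formula $\varphi_a(j,n)$ of $\langle\mathbb{N},+,V_2\rangle$ that holds exactly when $[d^*_{\sigma^n(\B)}(1)]_j=a$. The crucial structural input is \cref{lem:finitepref}: for every $n$ one has $d^*_{\sigma^n(\B)}(1)=w_n(200)^\omega$, where $w_n$ ranges over the finite list of \cref{tab:prefixes}. Hence the infinite object $d^*_{\sigma^n(\B)}(1)$ is entirely encoded by the single prefix $w_n$, and the digit at position $j$ is a completely explicit function of $j$ once $w_n$ is fixed: it equals $[w_n]_j$ for $j<|w_n|$, and for $j\ge|w_n|$ it equals $2$ if $j-|w_n|\equiv 0\pmod 3$ and $0$ otherwise.

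First I would express the dependence $n\mapsto w_n$. Writing $\B=\psi(\mathbf{t})$ with $\mathbf{t}=t_0t_1\cdots$ the Thue--Morse word on the two-letter alphabet $\{u,v\}$ and $\psi$ the $3$-uniform coding $u\mapsto\beta^3\beta\beta^3$, $v\mapsto\beta^3\beta^3\beta$, position $n$ lies at offset $i=n\bmod 3$ inside block number $m=\lfloor n/3\rfloor$. By the proof of \cref{lem:finitepref}, the value of $w_n$ is determined by the pair $(i,f)$, where $f=t_m t_{m+1}t_{m+2}t_{m+3}t_{m+4}$ is the length-$5$ factor of $\mathbf{t}$ starting at $m$ (for $i=0$ one simply has $w_n=\varepsilon$), through the finite lookup recorded in \cref{tab:allprefshift}. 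Thus I would build $\varphi_a(j,n)$ as the finite disjunction, over all relevant pairs $(i,f)$, of the conjunction of (a) the arithmetic predicate $n=3m+i$, (b) the predicate asserting that $\mathbf{t}$ has factor $f$ at position $m$, and (c) the explicit predicate in $j$ expressing that the $j$-th digit of $w_{i,f}(200)^\omega$ equals $a$.

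It then remains to check that each ingredient is first-order in $\langle\mathbb{N},+,V_2\rangle$. The decomposition $n=3m+i$ with $0\le i<3$, together with all residue-mod-$3$ conditions and length comparisons, is Presburger-definable, hence available. Since $\mathbf{t}$ is the $2$-automatic Thue--Morse word, the predicates $t_p=u$ and $t_p=v$ are definable in $\langle\mathbb{N},+,V_2\rangle$ by \cref{thm:decidable}, so fixing the five letters of $f$ at positions $m,\dots,m+4$ is a finite conjunction of such predicates. Finally, for each fixed $w_{i,f}$ the condition (c) is just a bounded case split on $j$ followed by a congruence condition, again Presburger. The resulting $\varphi_a$ are genuine formulas of $\langle\mathbb{N},+,V_2\rangle$, and \cref{prop:decidability} yields decidability of $\B$-admissibility for $2$-automatic sequences.

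The only real work has already been discharged by \cref{lem:finitepref}, which guarantees both that the quasi-greedy expansions $d^*_{\sigma^n(\B)}(1)$ are ultimately periodic with period word $200$ and that the finitely many preperiods $w_n$ are selected by a bounded window of the underlying automatic word; without this uniform boundedness the family $\{d^*_{\sigma^n(\B)}(1)\}_n$ could not be captured by a single first-order formula, which is exactly where I expect the conceptual difficulty to reside. Assembling the disjunction and verifying the table entries is then routine, so I anticipate no further obstacle.
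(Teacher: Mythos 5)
Your proposal is correct and follows essentially the same route as the paper: both invoke \cref{prop:decidability} with $b=2$, reduce via \cref{lem:finitepref} and \cref{tab:allprefshift} to a finite lookup indexed by the offset $n\bmod 3$ and the length-$5$ Thue--Morse factor at block $\lfloor n/3\rfloor$, use the $2$-automaticity of Thue--Morse to define the factor predicates in $\langle\mathbb{N},+,V_2\rangle$, and handle the periodic tail $(200)^\omega$ by a congruence on $j$. The only difference is cosmetic bookkeeping (the paper splits $j$ by an explicit residue $r'$ modulo $3$, you phrase it as $j-|w_n|\equiv 0\pmod 3$), which changes nothing of substance.
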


\begin{proof}
    To apply~\cref{prop:decidability}, we have to show that the property $\varphi_a(j,n)$ is definable in $\langle\mathbb{N},+,V_2\rangle$ for each $a\in\{0,1,2\}$. The formula is made of several parts, which may depend on $a$. 
    First, we proceed to Euclidean divisions of $j$ and $n$ by $3$:
     \[
        \bigvee_{r,r'\in\{0,1,2\}} \Big([(\exists q)(\exists q')(n=q\cdot 3+r\wedge j=q'\cdot 3+r')]\wedge 
        \gamma_{a,1}(q,r,j)\wedge\gamma_{a,2}(r,j,r')
        \Big)
    \] 
    where the formula $\gamma_{a,1}(q,r,j)$ is intended to handle the preperiodic part of $d_{\sigma^n(\B)}^*(1)$ while the formula $\gamma_{a,2}(r,j,r')$ will deal with its periodic part.
        
    Since the Thue--Morse word is $2$-automatic, for each $m\in\{1,\ldots,12\}$, there is a first-order formula $\psi_m(q)$ of $\langle\mathbb{N},+,V_2\rangle$ which is true if and only if the factor of length $5$ of the Thue--Morse word over $\{u,v\}$ occurring at position~$q$ is $f_m$ (using the notation of the proof of \cref{lem:finitepref}). For each $r\in\{1,2\}$ and $m\in\{1,\ldots,12\}$, we let $w_{r,m}$ be the output prefix of length $15-r$ corresponding to $\sigma^r(f_m)$ given in \cref{tab:allprefshift}.  
    
    If $r=1$ and $j\le 13$, or if $r=2$ and $j\le 12$, then we have to check that $a$ occurs at position $j$ in $w_{r,m}$. This finite amount of information can be coded in a formula depending on $a$: we set 
    \[
        \gamma_{a,1}(q,r,j)\equiv [(r=1\wedge j\le 13)\vee (r=2\wedge j\le 12)] \implies
            \left( \bigvee_{m=1}^{12} \big(\psi_m(q)\wedge [w_{r,m}]_j=a\big)\right).
    \]

    Otherwise, we are in the periodic part $(200)^\omega$ and we have to deal with the shifts. This is the purpose of the formula $\gamma_{a,2}$. Depending on $a$, we set
    \begin{align*}
        \gamma_{0,2}(r,j,r') 
            & \equiv [r=0\implies (r'=1\vee r'=2)] \\
            & \qquad \wedge [(r=1 \wedge j\ge 14) \implies (r'=0\vee r'=1)] \\
            & \qquad \wedge [(r=2 \wedge j\ge 13) \implies (r'=0\vee r'=2)] \\
        \gamma_{1,2}(r,j,r') 
            & \equiv r\ne 0 \wedge \neg(r=1 \wedge j\ge 14) \wedge \neg(r=2 \wedge j\ge 13), \\
        \gamma_{2,2}(r,j,r') 
            & \equiv [r=0\implies r'=0]\wedge [(r=1 \wedge j\ge 14) \implies r'=2] \\
            & \qquad \wedge [(r=2 \wedge j\ge 13) \implies r'=1].
    \end{align*}
    
    We have thus described the different parts of a first-order formula defining $\varphi_a$. In particular, note that the function~$V_2$ is only needed to handle factors of the Thue--Morse word.
\end{proof}

The idea behind the previous proof can be synthesized in the following more general statement. With these new notation, in the latter proof, we took advantage of the $b$-automaticity of the Cantor real base $\B$ combined with the structural properties of the transducer $\mathcal{T}_{E,1}^*$ to be able to define the finitely many formulas $\psi_w(n)$ handling the possible preperiods. Note that there was only one possible periodic part $x=200$ in this situation.

\begin{proposition}
\label{pro:finprop}
    Let $b\ge 2$ be an integer base and let $\B$ be a Cantor real base over a finite alphabet of real bases.  
    Suppose that $\B$ has the following finiteness property: there exists a finite language $L$ such that for all $n\ge 0$, there exist $w_n,x_n\in L$ such that $d_{\sigma^n(\B)}^*(1)=w_n(x_n)^\omega$. 
    Furthermore, suppose 
    for each $w\in L$ (resp.\ $x\in L$), there exists a formula $\psi_w(n)$ (resp.\ $\chi_x(n)$) of the structure $\langle \mathbb{N},+,V_b\rangle$ that is true if and only if $w_n=w$ (resp.\ $x_n=x$).
    Then it is decidable whether any given $b$-automatic sequence is $\B$-admissible.
\end{proposition}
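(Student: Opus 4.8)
The plan is to reduce the statement to \cref{prop:decidability}, which already yields a decision procedure for $\B$-admissibility of a $b$-automatic sequence as soon as one can exhibit, for every digit $a$ in the finite output alphabet $A$, a first-order formula $\varphi_a(j,n)$ of $\langle\mathbb{N},+,V_b\rangle$ that holds precisely when $[d_{\sigma^n(\B)}^*(1)]_j=a$. Hence the whole task is to \emph{synthesize} these formulas $\varphi_a$ from the hypotheses: the finite language $L$, the finiteness property $d_{\sigma^n(\B)}^*(1)=w_n(x_n)^\omega$, and the defining formulas $\psi_w$ and $\chi_x$ for the preperiods and periods.

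First I would note that, since $L$ is finite and the hypotheses fix for each $n$ a decomposition $(w_n,x_n)\in L\times L$ (with each $x_n$ non-empty), exactly one $w\in L$ satisfies $\psi_w(n)$ and exactly one $x\in L$ satisfies $\chi_x(n)$. This lets me range over the finitely many pairs $(w,x)\in L\times L$ in a guarded disjunction, the guard $\psi_w(n)\wedge\chi_x(n)$ selecting the genuine decomposition of $d_{\sigma^n(\B)}^*(1)$ for the given $n$.

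Next, for each \emph{fixed} pair $(w,x)$ I would build a purely Presburger formula $\theta_{a,w,x}(j)$ asserting that the letter at position $j$ of the ultimately periodic word $w\,x^\omega$ is $a$. Since $|w|$ and $|x|$ are now concrete constants, this splits into two finite cases: if $j<|w|$, the condition is the finite disjunction of the equalities $j=i$ over the positions $i<|w|$ of $w$ carrying $a$; if $j\ge|w|$, the condition is that $j-|w|\equiv k\pmod{|x|}$ for some position $k<|x|$ of $x$ carrying $a$, which is expressible in $\langle\mathbb{N},+\rangle$ as $(\exists q)\,(j=|w|+q\cdot|x|+k)$, multiplication by the fixed constant $|x|$ being iterated addition and the numerals $|w|,i,k$ being sums of $1$'s. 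Setting
\[
    \varphi_a(j,n)\;\equiv\;\bigvee_{w,x\in L}\bigl(\psi_w(n)\wedge\chi_x(n)\wedge\theta_{a,w,x}(j)\bigr)
\]
then gives a formula of $\langle\mathbb{N},+,V_b\rangle$ with the required meaning, and \cref{prop:decidability} closes the argument.

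I do not anticipate a serious obstacle: the one conceptual point is that once $w$ and $x$ are fixed all the length data become constants, so the a priori unbounded indexing into $w\,x^\omega$ collapses to a finite disjunction of Presburger-definable conditions, and the $V_b$ predicate is needed only inside the supplied formulas $\psi_w,\chi_x$. The only thing to verify carefully is that the guards $\psi_w(n)\wedge\chi_x(n)$ single out the correct decomposition for each $n$, which is exactly the content of the hypothesis that $\psi_w$ (resp.\ $\chi_x$) defines $\{n:w_n=w\}$ (resp.\ $\{n:x_n=x\}$).
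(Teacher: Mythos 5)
Your proposal is correct and follows essentially the same route as the paper: reduce to \cref{prop:decidability} by building $\varphi_a(j,n)$ as a guarded disjunction over the finitely many pairs $(w,x)\in L\times L$, with the guard $\psi_w(n)\wedge\chi_x(n)$ selecting the decomposition and a Presburger formula (finite case split on $j<|w|$ for the preperiod, an existential congruence $j=|w|+q\cdot|x|+r$ for the period) handling the indexing into $w\,x^\omega$. The paper's $\gamma_{a,w}$ and $\delta_{a,w,x}$ are exactly your $\theta_{a,w,x}$ split into its two cases, so there is nothing to add.
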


\begin{proof}
    Again, we aim at using \cref{thm:decidable}. Let $A$ be the finite alphabet of digits that may occur in $B$-expansions of real numbers in $[0,1]$ and let $a\in A$. The predicate $\varphi_a(j,n)$ can be defined within the structure $\langle \mathbb{N},+,V_b\rangle$ as follows:
    \[
        \bigvee_{w,x\in L} [\psi_w(n)\wedge \chi_x(n)\wedge \gamma_{a,w}(j)\wedge \delta_{a,w,x}(j)]
    \]
    where 
    \[
        \gamma_{a,w}(j)\equiv j<|w| \implies [w]_j=a
    \]
    handles the preperiod 
    and 
    \[
        \delta_{a,w,x}(j)\equiv \bigvee_{r=0}^{|w|-1} [(\exists q)(j=|w|+q\cdot|x|+r)\wedge [x]_r=a]
    \]
    handles the period.
\end{proof}



\begin{remark} 
\label{rem:Aperiodic-112-221}
    In Lemma~\ref{lem:finitepref}, we proved that the finiteness property mentioned in Proposition~\ref{pro:finprop} holds for the Cantor real base $\B$ from \cref{ex:smallPisot} given in~\eqref{eq:tmmodif}.
    Let us emphasize here that our proof heavily depends on the input words $u$ and $v$ used to build the Cantor real base $\B$. Consider $u'=\beta\beta\beta^3$ and $v'=\beta\beta^3\beta$ instead. These two words still label closed walks starting from the initial state $1$ with a common output label, which is $100$ in this case. Even though $d_{\sigma^{3n}(\B)}^*(1)=(100)^\omega$ for all $n\ge 0$ when $\B$ is the Cantor base $\B=u'v'v'u'v'u'u'v'\cdots$ (obtained as the Thue--Morse word over $\{u',v'\}$), we find ourselves with an aperiodic $\sigma(\B)$-expansion
    \begin{align*}
        d_{\sigma(\B)}^*(1)=100200100010200010100200100010100200010200\cdots.
    \end{align*} 
    Indeed, the sequence $\B$ is $2$-automatic since it is the image under a uniform morphism of a $2$-automatic sequence, see~\cref{cor:automatic}. In turn, $\sigma(\B)$ is also $2$-automatic as a shifted $2$-automatic sequence. By~\Cref{cor:automatic}, we get that the word $d_{\sigma(\B)}^*(1)$ is $2$-automatic. This is a routine computation to determine that the automaton generating this word has $51$ states, for instance with the {\tt Mathematica} package {\tt IntegerSequences} \cite{IntSeq}. We can thus automatically check that this word is aperiodic.  Evaluating the following command in the automatic theorem prover \texttt{Walnut}
\begin{verbatim}
eval periodic "Ep,n (p>=1) & (Ai (i>=n) => D[i]=D[i+p])":
\end{verbatim} where \texttt{D} is the automaton that generates $d_{\sigma(\B)}^{*}(1)$, returns {\tt False}, see~\cite{ShallitBook} for more details. Therefore, we cannot hope for a result such as \cref{lem:finitepref} for the Cantor real base $\B$ built from $u'$ and $v'$, and thus the finiteness hypothesis in \cref{pro:finprop} is not satisfied. 
\end{remark}

\section{Acknowledgements}
We thank Kevin Hare for helping us with algebraic properties of Pisot numbers. 
 Émilie Charlier is supported by the FNRS grant J.0034.22.
 Pierre Popoli is supported by ULiège's Special Funds for Research, IPD-STEMA Program. 
 Michel Rigo is supported by the FNRS Research grant T.196.23 (PDR). 

\bibliographystyle{alpha}
\bibliography{biblio.bib}

\begin{thebibliography}{BHMV94}

\bibitem[AFH07]{MR2299792}
Jean-Paul Allouche, Christiane Frougny, and Kevin~G. Hare.
\newblock On univoque {P}isot numbers.
\newblock {\em Math. Comp.}, 76(259):1639--1660, 2007.

\bibitem[AS03]{AS03}
Jean-Paul Allouche and Jeffrey Shallit.
\newblock {\em Automatic sequences: Theory, applications, generalizations}.
\newblock Cambridge University Press, Cambridge, 2003.

\bibitem[Bas02]{Bassino02}
Fr\'ed\'erique Bassino.
\newblock Beta-expansions for cubic {P}isot numbers.
\newblock In {\em L{ATIN} 2002: {T}heoretical informatics ({C}ancun)}, volume
  2286 of {\em Lecture Notes in Comput. Sci.}, pages 141--152. Springer,
  Berlin, 2002.

\bibitem[BH97]{BruyereHansel}
V{\'e}ronique Bruy{\`e}re and Georges Hansel.
\newblock Bertrand numeration systems and recognizability.
\newblock {\em Theor. Comput. Sci.}, 181(1):17--43, 1997.

\bibitem[BHMV94]{BHMV}
V{\'e}ronique Bruy{\`e}re, Georges Hansel, Christian Michaux, and Roger
  Villemaire.
\newblock Logic and {{\(p\)}}-recognizable sets of integers.
\newblock {\em Bull. Belg. Math. Soc. - Simon Stevin}, 1(2):191--238, 1994.

\bibitem[BR10]{CANT}
Val\'{e}rie Berth\'{e} and Michel Rigo, editors.
\newblock {\em Combinatorics, automata and number theory}, volume 135 of {\em
  Encyclopedia of Mathematics and its Applications}.
\newblock Cambridge University Press, Cambridge, 2010.

\bibitem[B{\"u}c60]{Buchi1960}
J.~Richard B{\"u}chi.
\newblock Weak second-order arithmetic and finite automata.
\newblock {\em Z. Math. Logik Grundlagen Math.}, 6:66--92, 1960.

\bibitem[Can69]{cantor}
Georg Cantor.
\newblock {\"U}ber die einfachen {Z}ahlensysteme.
\newblock {\em Z. Math. Phys.}, 14:121--128, 1869.

\bibitem[CC21]{CC2021}
\'{E}milie Charlier and C\'{e}lia Cisternino.
\newblock Expansions in {C}antor real bases.
\newblock {\em Monatsh. Math.}, 195(4):585--610, 2021.

\bibitem[CCD23]{CCD}
{\'E}milie Charlier, C{\'e}lia Cisternino, and Karma Dajani.
\newblock Dynamical behavior of alternate base expansions.
\newblock {\em Ergodic Theory Dyn. Syst.}, 43(3):827--860, 2023.

\bibitem[CCK24]{CCK}
{\'E}milie Charlier, C{\'e}lia Cisternino, and Savinien Kreczman.
\newblock On periodic alternate base expansions.
\newblock {\em J. Number Theory}, 254:184--198, 2024.

\bibitem[CCMP23]{CCMP}
{\'E}milie Charlier, C{\'e}lia Cisternino, Zuzana Mas{\'a}kov{\'a}, and Edita
  Pelantov{\'a}.
\newblock Spectrum, algebraicity and normalization in alternate bases.
\newblock {\em J. Number Theory}, 249:470--499, 2023.

\bibitem[Dek94]{Dekking1994}
F.M. Dekking.
\newblock Iteration of maps by an automaton.
\newblock {\em Discrete Mathematics}, 126(1):81--86, 1994.

\bibitem[Hic14]{MR3230872}
Hachem Hichri.
\newblock On the beta expansion of {S}alem numbers of degree 8.
\newblock {\em LMS J. Comput. Math.}, 17(1):289--301, 2014.

\bibitem[HP16]{MR3404455}
Tom\'a\v{s} Hejda and Edita Pelantov\'a.
\newblock Spectral properties of cubic complex {P}isot units.
\newblock {\em Math. Comp.}, 85(297):401--421, 2016.

\bibitem[Kem16]{Kempner1916}
Aubrey~J. Kempner.
\newblock On transcendental numbers.
\newblock {\em Trans. Amer. Math. Soc.}, 17(4):476--482, 1916.

\bibitem[Lot02]{Lothaire}
M.~Lothaire.
\newblock {\em Algebraic combinatorics on words}, volume~90 of {\em Encycl.
  Math. Appl.}
\newblock Cambridge: Cambridge University Press, 2002.

\bibitem[Mah29]{Mahler1929}
Kurt Mahler.
\newblock Arithmetische {E}igenschaften der {L}\"osungen einer {K}lasse von
  {F}unktionalgleichungen.
\newblock {\em Math. Ann.}, 101(1):342--366, 1929.

\bibitem[Mat02]{Matousek}
Ji{\v{r}}{\'{\i}} Matou{\v{s}}ek.
\newblock {\em Lectures on discrete geometry}, volume 212 of {\em Grad. Texts
  Math.}
\newblock New York, NY: Springer, 2002.

\bibitem[MP24]{MP}
Zuzana Mas\'{a}kov\'{a} and Edita Pelantov\'{a}.
\newblock Periodicity and pure periodicity in alternate base systems.
\newblock {\em Res. Number Theory}, 10(3), 2024.

\bibitem[Par60]{Parry1960}
Walter Parry.
\newblock On the {$\beta $}-expansions of real numbers.
\newblock {\em Acta Math. Acad. Sci. Hungar.}, 11:401--416, 1960.

\bibitem[R{\'e}n57]{Renyi1957}
Alfred R{\'e}nyi.
\newblock Representations for real numbers and their ergodic properties.
\newblock {\em Acta Math. Acad. Sci. Hungar.}, 8:477--493, 1957.

\bibitem[Rig14]{RigoBook}
Michel Rigo.
\newblock {\em Formal languages, automata and numeration systems. {Vol}. 2.
  {Applications} to recognizability and decidability}.
\newblock Hoboken, NJ: John Wiley \& Sons; London: ISTE, 2014.

\bibitem[Row18]{IntSeq}
Eric Rowland.
\newblock Integersequences: a package for computing with k-regular sequences.
\newblock In James~H. Davenport, Manuel Kauers, George Labahn, and Josef Urban,
  editors, {\em 6th International Congress on Mathematical Software (ICMS
  2018)}, volume 10931 of {\em Lect. Notes in Comp. Sci.}, pages 414--421.
  Springer, 2018.

\bibitem[Sak09]{Saka}
Jacques Sakarovitch.
\newblock {\em Elements of automata theory}.
\newblock Cambridge: Cambridge University Press, 2009.

\bibitem[Sal63]{Salem}
Raphael Salem.
\newblock Algebraic numbers and {Fourier} analysis.
\newblock Boston: {D}. {C}. {Heath} and {Company}. 66 p. (1963)., 1963.

\bibitem[Sha23]{ShallitBook}
Jeffrey Shallit.
\newblock {\em The logical approach to automatic sequences. {Exploring}
  combinatorics on words with {Walnut}}, volume 482 of {\em Lond. Math. Soc.
  Lect. Note Ser.}
\newblock Cambridge: Cambridge University Press, 2023.

\bibitem[Sid03]{Sidorov2003}
Nikita Sidorov.
\newblock Almost every number has a continuum of {{\(\beta\)}}-expansions.
\newblock {\em Am. Math. Mon.}, 110(9):838--842, 2003.

\end{thebibliography}

\end{document}